\newtheorem{theorem}{Theorem}
\newtheorem{assumption}{Assumption}
\newtheorem{proposition}{Proposition}
\newtheorem{definition}{Definition}
\newtheorem{lemma}{Lemma}
\newtheorem{remark}{Remark}
\newcommand{\blue}[1]{\begin{color}{blue}#1\end{color}}
\begin{document}

\title{Multi-step Inertial Accelerated  Doubly Stochastic Gradient Methods for 
Block Term Tensor Decomposition}


\author{Zehui Liu\thanks{LMIB, School of Mathematical Sciences, Beihang University, Beijing 100191, China. Email: liuzehui@buaa.edu.cn}, \hskip 0.2cm 
Qingsong Wang\thanks{School of Mathematics and Computational Science, Xiangtan University, Xiangtan 411105, China. Email: nothing2wang@hotmail.com}, \hskip 0.2cm 
Chunfeng Cui\thanks{LMIB, School of Mathematical Sciences, Beihang University, Beijing 100191, China. Email: chunfengcui@buaa.edu.cn}
}

\maketitle

\begin{abstract} 
In this paper, we explore a specific optimization problem that combines a differentiable nonconvex function with a nondifferentiable function for multi-block variables, which is particularly relevant to tackle the multilinear rank-($L_r$,$L_r$,1) block-term tensor decomposition model with a regularization term. While existing algorithms often suffer from high per-iteration complexity and slow convergence, this paper employs a unified multi-step inertial accelerated doubly stochastic gradient descent method tailored for structured rank-$\left(L_r, L_r, 1\right)$ tensor decomposition, referred to as  Midas-LL1. We also introduce an extended multi-step variance-reduced stochastic estimator framework. Our analysis under this new framework demonstrates the subsequential and sequential convergence of the proposed algorithm under certain conditions and illustrates the sublinear convergence rate of the subsequence, showing that the Midas-LL1 algorithm requires at most $\mathcal{O}(\varepsilon^{-2})$ iterations in expectation to reach an $\varepsilon$-stationary point. The proposed algorithm is evaluated on several datasets, and the results indicate that Midas-LL1 outperforms existing state-of-the-art algorithms in terms of both computational speed and solution quality.

\end{abstract}

\begin{keywords}
Block term tensor decomposition, multi-step inertial acceleration, stochastic gradient descent, variance-reduced estimator, nonconvex and nonsmooth model.

\end{keywords}

\maketitle
\section{Introduction}

In this paper, we focus on solving the composite optimization problem for multi-block variables, formulated as follows:

\begin{equation}\label{Pure_problem}
\min _{\{x_n\}_{n=1}^{N}} \Phi \left({x}_1, \ldots, {x}_N\right) \equiv f\left({x}_1, \ldots, {x}_N\right)+\sum_{n=1}^N h_n\left({x}_n\right),
\end{equation}
where $x_n \in \mathbb{R}^{d_n}, f: \Pi_{n=1}^N \mathbb{R}^{d_n} \rightarrow \mathbb{R}$ is a continuously differentiable nonconvex function for the entire variable set $x=\left(x_1, \ldots, x_N\right)$, and convex concerning each block $x_n$ when the other blocks are fixed, i.e.,  {$f$} is a block multiconvex function. The function $f(x)$ has a finite-sum structure, expressed as $f(x)=\frac{1}{n} \sum_{i=1}^n f_i(x)$, and $h_n$ for $n=1, \ldots, N$, are extended-value convex functions that serve as regularization terms, capturing prior information about $x_n$, such as non-negativity \cite{Lim2009}. We consider a class of problems that are nonsmooth and nonconvex. Optimization problems of the form \eqref{Pure_problem} are particularly relevant in practical applications and are widely used across various fields of science and engineering, including blind source separation\cite{JUTTEN19911}, nonnegative matrix and tensor factorization \cite{ComonGLM08, KoldaB09, CheW20, WangLCH24}, and Poisson inverse problems \cite{BolteSTV18First}.

Several methods have been developed to tackle these types of optimization problems. One state-of-the-art approach is the block coordinate descent (BCD) \cite{Tseng2001} method of Gauss-Seidel type, which iteratively optimizes each block of variables while keeping the others fixed, effectively breaking down a complex problem into simpler subproblems. Additionally, BCD allows these blocks to be updated via proximal minimization, as discussed in \cite{auslender1992}. However, it is important to note that both BCD and proximal BCD schemes generally involve an inner minimization problem. This can lead to a higher computational burden, particularly when the minimizer does not have a closed-form solution. 
In 2013, Xu and Yin \cite{120887795} proposed an efficient {proximal linearized BCD} approach, which leverages the linearization of the differentiable component function $f$, facilitating the development of efficient numerical algorithms.  Bolte, Sabach, and Teboulle \cite{BolteST14} proposed the proximal alternating linearized minimization (PALM) for blocks $N\ge2$, where the blocks are taken
in cyclic order while fixing the previously computed iterate. The proximal linearized BCD/PALM is briefly outlined below:
\begin{equation}\label{palm}
    \mathbf{x}_n^{k+1}=\underset{\mathbf{x}_n \in {\mathbb{R}^{d_n}}}{\operatorname{argmin}}\left\langle \nabla f_n^{k+1}\left(\mathbf{x}_n\right), \mathbf{x}_n-\mathbf{x}_n^{k}\right\rangle+\frac{1}{2\eta^k}\left\|\mathbf{x}_n-\mathbf{x}_n^{k}\right\|^2+h_n\left(\mathbf{x}_n\right),
\end{equation}
where $f_n^{k+1}\left(\mathbf{x}_n\right) \triangleq f\left(\mathbf{x}_1^{k+1}, \ldots, \mathbf{x}_{n-1}^{k+1}, \mathbf{x}_n, \mathbf{x}_{n+1}^{k}, \ldots, \mathbf{x}_N^{k}\right)$ for {all} $n$, and the stepsize $\eta^k$ is computed according to  Lipschitz constants of $\nabla f_n^{k+1}\left(\mathbf{x}_n\right)$. Based on the Kurdyka-{\L}ojasiewicz inequality, the bounded sequence $\left\{\mathbf{x}_1^{k},\ldots,\mathbf{x}_N^{k}\right\}_{k=0}^{\infty}$ generated by \eqref{palm} can converge globally to the critical point of \eqref{Pure_problem}. 

Stochastic algorithms can reduce iteration costs when the data dimension is large. Xu and Yin \cite{XuY15}  integrated  a  {vanilla} stochastic gradient descent (SGD) estimator with PALM, and Driggs et al. \cite{DriggsTLDS2020} introduced the SPRING algorithm, a stochastic version of PALM, using more sophisticated   variance-reduced gradient estimators
instead of the  {vanilla} stochastic gradient descent estimators. Additionally,  Hertrich and Steidl \cite{HertrichS20} proposed a stochastic version of iPALM \cite{PockS16} that introduced an inertial step to improve the performance of PALM further, and established its convergence under similar conditions. Liang et al. \cite{Liang2016} introduced a multi-step inertial forward-backward splitting algorithm aimed at minimizing the sum of two non-necessarily convex functions. Furthermore, Guo et al. \cite{Guo2023} developed a stochastic two-step inertial Bregman PALM algorithm for large-scale nonconvex and nonsmooth optimization problems.

These advancements in stochastic algorithms are particularly relevant for high-dimensional data problems. Specifically, the form of problem \eqref{Pure_problem} can be effectively applied to tensor decomposition \cite{KoldaB09} with regularization. Tensor decompositions generalize matrix decompositions and are powerful tools that enable the recovery of underlying components. BCD and tensor decomposition are linked together through the matrix unfolding operation, rearranging the elements of a tensor to a matrix and pushing the latent factors to the rightmost position in the unfolded tensor representation. In 2020, Fu et al. \cite{FuIWGH20} proposed BrasCPD, a block-randomized stochastic algorithmic framework for computing the CP decomposition \cite{Hitchcock1927} of large-scale dense tensors. Wang et al. introduced mBrasCPD \cite{WangCH21} and iBrasCPD \cite{Wang2023}, which accelerate the SGD scheme using the heavy ball method \cite{Polyak64} and inertial acceleration, respectively. Additionally, Pu et al. \cite{Pu2022} developed a block-randomized stochastic mirror descent (SMD) algorithmic framework for large-scale generalized CP decomposition (GCP) \cite{GCP2020}. 
Recently, Liu et al. \cite{liu2024} developed an inertial accelerated version of SMD for GCP problem, and showed sub-sequential and sequentially convergence guarantees.
Besides CP decomposition, many tensor factorization models, such as Tucker decomposition \cite{Tucker1963, Tucker1964, Tucker1966} and block term decomposition (BTD) \cite{DeLathauwer2008a, DeLathauwer2008b, DeLathauwer2008c}, exhibit similar multi-linearity properties in their respective unfolded forms. However, the aforementioned algorithms primarily focus on solving CP decomposition problems, with limited attention given to other types of tensor decomposition problems.

CP decomposition and Tucker decomposition correspond to different tensor generalizations of matrix rank. Tucker decomposition is associated with $n$-ranks, generalizing column and row ranks. In contrast, CP decomposition defines rank as the minimal number of rank-1 terms required to represent a tensor, treating all tensor modes equally and processing them identically. However, Tucker decomposition does not explicitly divide a tensor into a sum of component tensors, complicating its direct application to the linear mixture model. In 2008, the Tucker and CP decompositions were connected by introducing the BTD. This model overcomes the CP decomposition limitation that each component must be rank-1 and the Tucker decomposition constraint of having a single component tensor. Consequently, BTD has found extensive application in hyperspectral unmixing \cite{bioucas2012, ma2014signal}, and community detection in networks \cite{gujral2020}. In \cite{DeLathauwer2008b}, BTD was introduced as a sum of $R$ rank-$\left(L_r, M_r, N_r\right)$ terms $(r=1,2, \ldots, R)$ in general. Domanov et al. \cite{23M1557246} recently presented new deterministic and generic conditions for the uniqueness of rank-$\left(L_r, M_r, N_r\right)$ decompositions. Another special case of rank-$\left(L_r, L_r, 1\right)$ BTD for third-order tensor has attracted more attention, because of both its more frequent occurrence in applications and the existence of more concrete and easier to check uniqueness conditions.  For instance, a hyperspectral image (HSI) can be represented as a third-order tensor, where its modes are distinguishable in a spectral-spatial manner, with two spatial modes separate from a spectral mode. Therefore, rank-$\left(L_r, L_r, 1\right)$ BTD is a suitable model, where the first two modes correspond to the spatial positions, and the third mode corresponds to the spectral information. This paper shall also focus on this specific and widely-used rank-$\left(L_r, L_r, 1\right)$ BTD model. 

The rank-$\left(L_r, L_r, 1\right)$ BTD often have structural constraints and regularization on the latent factors, which usually come from physical meaning and prior information on HSIs. Designing tailored algorithms for structured rank-$\left(L_r, L_r, 1\right)$ tensor decomposition presents several challenges. One key issue is that the speed of existing methods is often inadequate. For example, Alternating Least Squares (ALS) has been extended to compute tensor BTD as described in \cite{DeLathauwer2008c}. However, the primary bottleneck in ALS lies in the matricized-tensor times Khatri-Rao product (MTTKRP) \cite{fu2019}, which becomes even more computationally demanding in the BTD context compared to its application in CP decomposition. This increased computational cost arises because two of the latent factors in BTD are typically larger in scale. Nonnegativity constraints on the latent factors are typically managed by the classic multiplicative update (MU) algorithm \cite{lee2001}, which updates one factor using the majorization-minimization method but with a conservative stepsize. The MU algorithm is also prone to numerical issues in some cases, when there are iterates that contain zero elements \cite{4359171}. In addition, the ALS-MU \cite{qian2017, 8497054, 6797154} combination often leads to a considerably high per-iteration complexity. These algorithms, however, also encounter challenges related to slow convergence issues.
These limitations remain a critical area for ongoing research, prompting us to explore the potential of stochastic methods in computing rank-$\left(L_r, L_r, 1\right)$ BTD. 

 In this paper, a doubly stochastic gradient descent method with multi-step inertial acceleration is proposed to tackle the nonconvex and nonsmooth optimization problem \eqref{Pure_problem}. This method is particularly suitable for applications in rank-$\left(L_r, L_r, 1\right)$ BTD. Our detailed contributions are as follows:

\begin{itemize}
\item[(i)] We introduce a multi-step inertial accelerated block-randomized stochastic gradient descent method, i.e. doubly stochastic algorithm for rank-$\left(L_r, L_r, 1\right)$ block-term decomposition problem, denoted by Midas-LL1. Inherited from the variance-reduced stochastic estimator, we propose an extended multi-step and multi-block variance-reduced stochastic estimator. Based on these, we demonstrate that Midas-LL1 achieves a sublinear convergence rate for the generated subsequence.

\item[(ii)] The global convergence of the sequence generated by Midas-LL1 is established. We introduce a novel muti-step Lyapunov function, showing that the algorithm requires at most $\mathcal{O}(\varepsilon^{-2})$ iterations in expectation to attain an $\varepsilon$-stationary point. 

\item[(iii)] We conduct extensive experiments on two hyperspectral image datasets and two video datasets to illustrate the effectiveness of our proposed Midas-LL1 algorithms. Our numerical results indicate that incorporating variance-reduced stochastic gradient estimators and multi-step acceleration into Midas-LL1 yields superior performance.
\end{itemize}

The rest of this paper is organized as follows. Section \ref{preliminary} provides the essential definitions and preliminary results related to existing models and algorithms. Section \ref{algorithm} introduces the formulation of the proposed Midas-LL1 algorithm. Section \ref{convergence_analysis} focuses on establishing the convergence properties and convergence rate of Midas-LL1. In Section \ref{numercial_experiments}, we evaluate the performance of the Midas-LL1 algorithm in comparison with several baseline methods using real-world datasets. Finally, the paper concludes in Section \ref{conclusion}.

\section{Preliminaries}\label{preliminary}
 
In this section, we summarize some useful definitions and introduce the block term decomposition as well as various stochastic methods for tensor decomposition.

\begin{definition}\cite{WetsR98}
Let $h:  \mathbb{R}^{d} \rightarrow(-\infty,+\infty]$ be a proper and lower semicontinuous function.  
For $x \in \operatorname{dom} h$, the Fréchet subdifferential of $\Phi$ at $x$, written $\hat{\partial} h(x)$, is the set of vectors $v \in \mathbb{R}^d$ which satisfy
$$
\liminf _{y \rightarrow x} \frac{1}{\|x-y\|_2}[h(y)-h(x)-\langle v, y-x\rangle] \geq 0 .
$$
If $x \notin \operatorname{dom} h$, then $\hat{\partial} h(x)=\emptyset$. The limiting subdifferential of $h$ at $x \in \operatorname{dom} h$, written $\partial h(x)$, is defined as follows:
$$
\partial h(x):=\left\{v \in \mathbb{R}^d: \exists x_k \rightarrow x, h\left(x_k\right) \rightarrow h(x), v_k \in \hat{\partial} h\left(x_k\right), v_k \rightarrow v\right\} .
$$
\end{definition}


\begin{proposition}
 For all $(x_{1},\dots,x_{s}) \in \operatorname{dom} \Phi$ in problem \eqref{Pure_problem} we have
 $$
\partial \Phi(\mathbf{x})=\left\{\nabla_{\mathbf{x}_1} f(\mathbf{x})+\partial h_1\left(\mathbf{x}_1\right)\right\} \times \cdots \times\left\{\nabla_{\mathbf{x}_s} f(\mathbf{x})+\partial h_s\left(\mathbf{x}_s\right)\right\}.
$$
\end{proposition}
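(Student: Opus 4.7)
The plan is to derive the formula in two steps, treating $\Phi$ as the sum of the smooth function $f$ and the block-separable nonsmooth function $H(x_1,\ldots,x_s):=\sum_{n=1}^{s} h_n(x_n)$. First I would invoke the standard subdifferential sum rule for a smooth-plus-nonsmooth decomposition: since $f$ is continuously differentiable on an open set containing $\operatorname{dom}\Phi$ and each $h_n$ is proper and lower semicontinuous, Exercise 8.8(c) of Rockafellar--Wets gives
\[
\partial \Phi(x) \;=\; \nabla f(x) + \partial H(x).
\]
This is the cleanest way to peel off the gradient contribution and reduce the problem to computing $\partial H$.

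Next I would exploit the block separability of $H$. Because $H(x_1,\ldots,x_s)=\sum_{n=1}^{s} h_n(x_n)$ is a sum of functions each depending on a disjoint subset of coordinates, its limiting subdifferential factors as a Cartesian product,
\[
\partial H(x) \;=\; \partial h_1(x_1)\times \cdots \times \partial h_s(x_s).
\]
To justify this, I would apply the definition directly: use Fr\'echet subdifferentials first, verify the product rule coordinate-wise via the defining $\liminf$ inequality applied to test directions $(y_1,\ldots,y_s)$ restricted to one block at a time, and then pass to the limiting subdifferential using the fact that if $x^{(k)}\to x$ with $H(x^{(k)})\to H(x)$ then automatically $h_n(x_n^{(k)})\to h_n(x_n)$ for each $n$ (using lower semicontinuity together with the fact the sum converges).

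Finally, I would combine the two identities. Writing $\nabla f(x) = (\nabla_{x_1} f(x),\ldots,\nabla_{x_s} f(x))$ and distributing the Cartesian product over coordinate-wise addition yields
\[
\partial \Phi(x) \;=\; \bigl(\nabla_{x_1} f(x)+\partial h_1(x_1)\bigr)\times\cdots\times\bigl(\nabla_{x_s} f(x)+\partial h_s(x_s)\bigr),
\]
which is exactly the claimed formula.

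The main obstacle is the justification of the block-separable product rule at the level of the limiting (as opposed to Fr\'echet) subdifferential, since limiting subdifferentials are generally not additive; the separation across disjoint coordinate blocks is essential and must be handled carefully through the approximating sequences in the definition. The smooth-plus-nonsmooth sum rule, by contrast, is a textbook fact once $f$ is $C^1$, so no delicate qualification condition is needed there.
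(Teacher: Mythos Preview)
Your proposal is correct and follows essentially the same two-step structure as the paper's proof: first apply the smooth-plus-nonsmooth sum rule to get $\partial\Phi(x)=\nabla f(x)+\partial H(x)$, then use block separability to factor $\partial H(x)$ as the Cartesian product $\partial h_1(x_1)\times\cdots\times\partial h_s(x_s)$. The paper's proof simply asserts the separable subdifferential calculus step without further justification, whereas you sketch how to verify it from the definitions; this is additional detail rather than a different route.
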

\begin{proof}
    Observe first that we can get  $\partial \Phi(\mathbf{x})= \nabla f(\mathbf{x})+\partial\left(\sum_{n=1}^s h_n\right)$,  since $f$ is continuously differentiable. Further, the subdifferential calculus for separable functions  yields $\partial\left(\sum_{n=1}^s h_n\right)=\partial h_1\left(\mathbf{x}_1\right) \times \cdots \times  \partial h_s\left(\mathbf{x}_s\right)$. Hence we get the above equality.
\end{proof}

\begin{definition} \label{stationary-point} (\cite{Lan2020First} $\epsilon$-stationary point) 
Given $\epsilon>0$, a solution $\{x_{1}^{*},\dots,x_{s}^{*}\}$ is said to be an $\epsilon$-stationary point of function $\Phi(x_{1},\dots,x_{s})$ if
\[
\mbox{dist}(0,\partial \Phi(x_{1}^{*},\dots,x_{s}^{*}))\le\epsilon. 
\]
\end{definition} 

\subsection{Block term decomposition with rank-($L_r$,$L_r$,1)}
Tensor decomposition \cite{KoldaB09} can break a large-size tensor into many small-size factors, including CP \cite{Hitchcock1927, Hitchcock1928} and Tucker decomposition \cite{Tucker1963, Tucker1964, Tucker1966} connected with two different tensor generalizations of matrix decomposition. BTD \cite{DeLathauwer2008a, DeLathauwer2008b, DeLathauwer2008c} is proposed to unify the Tucker and CP, decomposing a tensor into a sum of low multilinear rank terms.  

Consider an $N$-th order tensor $\mathcal{X}\in\mathbb{R}^{I_{1}\times I_{2}\times\dots\times I_{N}}$ with  $\mathcal{X}(i_{1},\dots,i_{N})$ represents an element of $\mathcal{X}$. 
The block term decomposition is
\begin{equation}\label{BTD}
  \mathcal{X}=\sum_{r=1}^R \mathcal{G}_r \times_1 \mathbf{U}_r^{(1)} \times_2 \mathbf{U}_r^{(2)} \cdots \times_N \mathbf{U}_r^{(N)},  
\end{equation}
in which $\mathbf{U}_r^{(n)} \in \mathbb{R}^{I_n \times J_n^r}$ represents the $n$-th factor in the $r$-th term, and $\mathcal{G}_r \in$ $\mathbb{R}_1^{J_1^r \times J_2^r \times \cdots \times J_N^r}$ is the core tensor in the $r$-th term.  If $\mathcal{G}_r$ is an identity cubical tensor for all $r=1,\dots,R$, then \eqref{BTD} becomes the CP decomposition. When \(R=1\),  \eqref{BTD} reduces to Tucker decomposition. A visual representation of BTD  for a third-order tensor is shown in Figure \ref{BTD_figure}. 
\begin{figure}[!htb]
\setlength\tabcolsep{2pt}
\centering
\begin{tabular}{cccccc}
\includegraphics[width=0.8\textwidth]{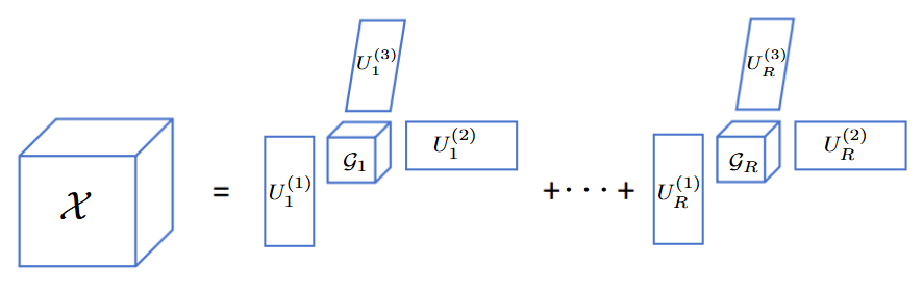}
\end{tabular}
\caption{BTD for a third-order tensor.}
\label{BTD_figure}
\end{figure}

The mode-$n$ {\sl unfolding} of $\mathcal X$, denoted by $X_{(n)}$, is a $J_n$-by-$I_n$ matrix, where $J_{n}=\prod_{m=1,m\neq n}^{N}I_{m}$ and each element in $X_{(n)}$ is defined by 
\[
X_{(n)}(j,i_{n})=\mathcal{X}(i_{1},\dots,i_{N}),
\]
where $j=1+\sum_{k=1,k\neq n}^{N}(i_{k}-1)\bar{J}_{k}$ and $\bar{J}_{k}=\prod_{m=1,m\neq n}^{k-1}I_{m}$ \cite{KoldaB09}. 
A mode-$n$ fiber is defined by fixing every index $(i_1,\dots,i_{n-1},i_{n+1},\dots,i_N)$ but keeping $i_n\in\{1,\dots,I_n\}$ in variation.

Under the umbrella of block term decomposition \eqref{BTD}, the special case of BTD with rank-($L_r$, $L_r$, 1) terms has earned significant interest due to its frequently occurring application and the easily verifiable conditions for uniqueness. 
 The BTD with rank-$\left(L_r, L_r, 1\right)$  
 approximates a third-order tensor as a sum of $R$ component tensors, where each term is the outer product of a rank-$L_r$ matrix and a vector. Specifically, let $A_1^r \in \mathbb{R}^{I_1 \times L_r}$ and $A^r_2 \in \mathbb{R}^{I_2 \times L_r}$ be rank-$L_r$ matrices, and $\boldsymbol{c}_r \in \mathbb{R}^{I_3}$ be a non-zero vector. Then, the rank-$(L_r, L_r, 1)$ BTD of $\mathcal{X}\in \mathbb{R}^{I_1 \times I_2 \times I_3}$ can be expressed as follows:
 
\begin{equation}\label{LL1}
  \mathcal{X} \approx \sum_{r=1}^R(A_{1,r} \cdot A_{2,r}^{\top}) \circ \boldsymbol{c}_r. 
\end{equation}
\begin{figure}[!htb]
\setlength\tabcolsep{2pt}
\centering
\begin{tabular}{cccccc}
\includegraphics[width=0.8\textwidth]{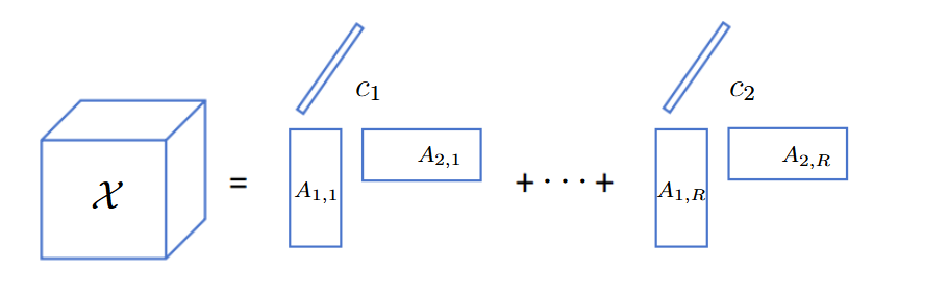}
\end{tabular}
\caption{BTD with rank-$\left(L_r, L_r, 1\right)$ for a third-order tensor.}
\label{LL1_figure}
\end{figure}
A visual representation of this decomposition for a third-order tensor is shown in Figure \ref{LL1_figure}. The Canonical Polyadic Decomposition (CPD) becomes a specific case of BTD with rank-$(1, 1, 1)$ terms when $L_r=1$. Consequently, permutation and scaling indeterminacies inherited from the CPD, a nice feature of this decomposition is that the rank-one block-terms $(A^r_1 \cdot {A_2^r}^{\top}) \circ \boldsymbol{c}_r$ are unique under quite mild conditions. Therefore, the rank-$\left(L_r, L_r, 1\right)$ BTD is also called essentially unique when
it is subject only to these trivial indeterminacies.
We use the following interpretation to
formulate \eqref{LL1} as the nonlinear least squares problem
\begin{equation}\label{def:f}
	f(A_1, A_2, A_3) = \frac{1}{2I^N}\left\|\mathcal{X}-\sum_{r=1}^R(A_{1,r} \cdot {A_{2,r}}^{\top}) \circ \boldsymbol{c}_r\right \|_{F}^{2}, 
\end{equation}
where the partitioned matrices $A_1=[A_{1,1},\dots,A_{1,R}]\in \mathbb{R}^{I_1 \times \sum_{r=1}^R L_r}$, $A_2=[A^1_2,\dots,A^R_2]\in \mathbb{R}^{I_2 \times \sum_{r=1}^R L_r}$, and $A_3=[\boldsymbol{c}_1,\dots,\boldsymbol{c}_R]\in \mathbb{R}^{I_3 \times R}$. Denote $I=(I_1I_2I_3)^{\frac13}$ as the geometric mean of the three dimensions. 
The function $f(\cdot)$ is smooth, yet it is both nonlinear and non-convex. The corresponding loss function is derived from a maximum likelihood estimate, predicated on the assumption that the data adhere to a Gaussian distribution. 
Alternative assumptions regarding the data distributions, such as those based on Bernoulli or Poisson distributions, would necessitate the formulation of distinct loss functions. For additional details, we refer the reader to \cite{GCP2020}. 

By taking the structures of the variables $A_n$ into consideration, we focus on the regularized problem as follows
\begin{align}\label{regular_LL1D}
	\min_{\{A_{n}\}_{n=1}^{3}}\,\, \Phi(A_{1},A_{2},A_{3}) := f(A_{1}, A_2,A_{3})+\sum_{n=1}^{3}h_{n}(A_{n}).
\end{align}
Here, $h_{n}(A_{n})$ denotes a structure promoting regularizer on $A_{n}$, such as column-wise orthogonality \cite{Krijnen2008}, Tikhonov regularization \cite{Paatero2000},  nonnegativity \cite{Lim2009}. Those regularizations may result in well-posed problems. For instance, if $A_{n}\in \mathbb{R}^{I_n\times R}_{+}:=\{A_{n}\vert A_{n}\ge 0\}$ is applied, we can write $h_{n}(\cdot)$ as the indicator function: 
\begin{equation*}\label{nonnegative_eq}
	h_{n}(A)=\mathcal{I}_{\mathbb R_+^{I_n\times R}}\left(A\right)=\left\{\begin{array}{ll}
		0, & A \ge0, \\
		\infty, & \text { otherwise. }
	\end{array}\right.
\end{equation*}
Suppose all factors but $A_n$ are fixed, then  (\ref{regular_LL1D}) is reduced to the subproblem
\begin{eqnarray}\label{matrix_ALS}
 \min_{A_n}\quad\frac{1}{2I^N}\left\|X_{(n)}-H_{n}A_n^{\top}\right\|_{F}^{2}+h_n(A_n), 
\end{eqnarray} 
where $H_{n}$ for $n=1,2$, and $3$ is defined by
\begin{eqnarray}\label{H_12}
    \begin{aligned}
    H_{1} &=A_{3} \odot_b A_{2}=[c_1 \otimes A^1_2, \ldots, c_R \otimes A^R_2]\in\mathbb{R}^{J_{1}\times L}, \\ 
    H_{2} &=A_{3} \odot_b A_{1}=[c_1 \otimes A^1_1, \ldots, c_R \otimes A^R_1]\in\mathbb{R}^{J_{2}\times L},\\
    H_{3} &=\left[\left(A^1_2 \odot A^1_1\right) 1_{L_1}, \ldots,\left(A^R_2 \odot A^R_1\right) 1_{L_R}\right]\in 
\mathbb{R}^{J_{3}\times R},
\end{aligned}
\end{eqnarray}
where $L=\sum_{r=1}^R L_r$  and $1_{L_r}$ is all-one column vector with length $L_r$. The notation $\odot_b$ is a generalized block-wise Khatri–Rao product for partitioned matrices with the same number of submatrices, which consists of the partition-wise Kronecker product. The formulation of $H_{3}$ is  due to the last factor $A_3$ of BTD is the rank-1 component.

Optimization methods such as (proximal) gradient descent can solve the subproblem \eqref{matrix_ALS} directly. The gradient of $f$ with respect to $A_n$ is equal to 
\begin{equation}\label{equ:grad_f}
	\nabla_{A_n} f(A_{1},A_{2},A_{3})=\frac{1}{I^N}(A_{n}  H_{n}^{\top}H_{n}-X_{(n)}^{\top}H_{n}). 
\end{equation}
Computing $X_{(n)}^{\top}H_{n}$ in \eqref{equ:grad_f} is usually expensive. It is referred to the matricized-tensor times  Khatri-Rao product (MTTKRP) \cite{KoldaB09}, which takes $\mathcal O(RI_1 I_{2} I_3)$ operations. When the value of $I_{n}$ ($n=1,\dots,3$) is large, the cost of computing the gradient \eqref{equ:grad_f} is often prohibitively expensive, rendering most traditional deterministic first-order optimization algorithms ineffective.
Rank-$\left(L_r, L_r, 1\right)$ BTD presents a distinct structural configuration, meriting further exploration in applying the stochastic gradient method.

\subsection{Stochastic methods with inertial acceleration for tensor decomposition}

If $L_r = 1$, the decomposition format in \eqref{LL1} is CP decomposition and \eqref{matrix_ALS} is a CP decomposition type subproblem.  
    The iBrasCPD \cite{Wang2023} updates the factor variables by 
\begin{eqnarray}
	\begin{aligned}
		A_{n}^{k+1} &= \underset{A_{n}}{\arg\min}\,\,h_{n}\left(A_{n}\right)+\langle \tilde{\nabla}_{A_n}f(\underline{A}_{n}^{k}),A_{n}-\tilde A_{n}^{k}\rangle+\frac{1}{2\eta^{k}} \left\|A_{n}-\tilde A_{n}^{k}\right\|_{F}^{2}, \\
		A_{n^{\prime}}^{k+1} &= A_{n^{\prime}}^{k}, \quad n^{\prime} \neq n,
	\end{aligned}\label{update_A_O}
\end{eqnarray}
where $\tilde{\nabla}_{A_n}f(\underline{A}_{n}^{k})=\tilde{\nabla}_{A_n}f(A_1^k \cdots \underline{A}_n^k \cdots A_N^k)$ with inertial acceleration $\underline{A}_{n}^{k}=A_{n}^{k}+\beta^{k}(A_{n}^{k}-A_{n}^{k-1})$ and $\tilde{A}_{n}^{k}={A}_{n}^{k}+\alpha^{k}(A_{n}^{k}-A_{n}^{k-1})$ for proximal term. When  $\alpha^{k}=\beta^{k}=0$, then the iBrasCPD algorithm is reduced to BrasCPD \cite{FuIWGH20}; if $\alpha^{k}=0$, then the iBrasCPD equals to mBrasCPD \cite{WangCH21}. 

If $h_{n}(\cdot)$ is a closed proper convex function,    \eqref{update_A_O} can be solved by applying the proximal operator of $h_{n}(\cdot)$, which is  denoted as 
\begin{eqnarray}
	A_{n}^{k+1}= \mbox{Prox}_{\eta^{k}h_{n}}\left(\tilde A_{n}^{k}-\eta^{k} \tilde{\nabla}_{A_n}f(\underline{A}_{n}^{k})\right). \label{h_proximal}
\end{eqnarray}

The variance-reduced gradient estimators are also taken into consideration. One of the popular variance-reduced gradient estimators is SAGA \cite{DefazioBL14},  which is formulated as follows 
\begin{eqnarray}
	\tilde{G}_{n}^{k}:=\frac{1}{{I_n}\left|\mathcal{F}_{n}^{k}\right|}\left(\sum_{j\in \mathcal{F}_{n}^{k}}\nabla_{A_{n}}f_{j}(A^{k})-\nabla_{A_{n}}f_{j}((\tilde{A}^{k})^{j})\right)+\frac{1}{J_{n}}\sum_{i=1}^{J_{n}}\nabla_{A_{n}}f_{i}((\tilde{A}^{k})^{i}),\label{vr_gradient}
\end{eqnarray}
where $A^{k}:=(A_{1}^{k},\dots,A_{N}^{k})$, and the variables $(\tilde{A}^{k})^{i}$  are updated by  $(\tilde{A}^{k})^{i}=\tilde{A}^{k}$ if $i\in \mathcal{F}_{n}^{k}$ and $(\tilde{A}^{k})^{i}=(\tilde{A}^{k-1})^{i}$ otherwise.

The update scheme in \eqref{update_A_O} employs one-step inertial acceleration, which improves the global convergence of the iterates.  However, when the parameters $\alpha^k$, $\beta^k$, and $L_r$ are not carefully chosen, this method may not demonstrate significant acceleration compared to using the SAGA estimator without any acceleration. To better illustrate the effectiveness of the acceleration method,   the multi-step inertial scheme was proposed in \cite{Liang2016}, which can be given by
\[
\begin{aligned}
	&\tilde{x}^{k}=x^{k}+\sum_{i=1}^t \alpha^{k+1-i}(x^{k+1-i}-x^{k-i}), \\
	&\underline{x}^{k}=x^{k}+\sum_{i=1}^t \beta^{k+1-i}(x^{k+1-i}-x^{k-i}),\\
	& x^{k+1}= \tilde{x}^{k}-\eta^{k} \nabla f(\underline{x}^{k}).
\end{aligned}
\]
Figure \ref{multi-step} demonstrates that the multi-step inertial scheme offers greater flexibility and possibly achieves better acceleration than the one-step approach. 
Multi-step acceleration also allows even a negative inertial parameter, which performs better than all nonnegative parameters, but for one-step acceleration, a negative parameter is not a good choice.
\begin{figure}[!htb]
\setlength\tabcolsep{2pt}
\centering
\begin{tabular}{cccccc}
\includegraphics[width=0.7\textwidth]{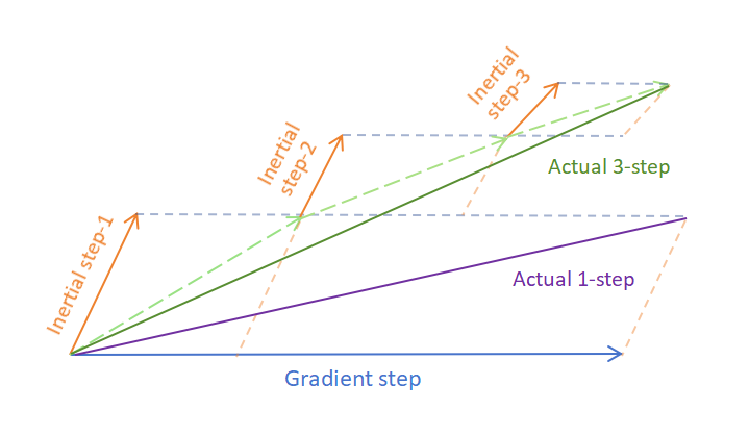}
\end{tabular}
\caption{The illustration of multi-step inertial acceleration.}
\label{multi-step}
\end{figure}


\section{Multi-step inertial accelerated doubly SGD algorithm}\label{algorithm}

In this section, we firstly introduce a multi-step inertial accelerated doubly stochastic gradient descent algorithm, a novel approach tailored for the rank-$(L_r, L_r, 1)$ block term decomposition problem \eqref{regular_LL1D}, denoted by Midas-LL1. This algorithm optimizes all partitioned matrices of every factor in parallel, using a gradient descent mechanism enhanced with a multi-step inertial framework to improve convergence. 

We first reformulate the subproblem \eqref{matrix_ALS} equivalently as 
\begin{eqnarray}\label{sum_matrix}
	{A}_{n,r}^{*}=\underset{A_{n,r}}{\arg\min}\quad\frac{1}{2I^3}\left\|X_{(n)}-\sum_{r=1}^R H_{n,r}A_{n,r}^{\top}\right\|_{F}^{2}+h_n(A_{n,r}), 
\end{eqnarray} 
where $\sum_{r=1}^R h_n(A_{n,r})=h_n(A_n)$. The gradient of $f$ with respect to partition-wise factor $A_{n,r}$ for $r=1, 2,\ldots, R$ is equal to 
\begin{equation}\label{btd_grad_f}
	\nabla_{A_{n, r}} f(A_{1},A_{2},A_{3})=\frac{1}{I^N}\left(\left(\sum_{r=1}^R H_{n,r}A_{n,r}^{\top}\right)^\top H_{n,r}-X_{(n)}^{\top}H_{n,r}\right)=\frac{1}{I^N}(A_{n}  H_{n}^{\top}H_{n,r}-X_{(n)}^{\top}H_{n,r}),
\end{equation}
where $H_{n,r}$ is the partition-wise Kronecker product in \eqref{H_12}. 
Let $\tilde{\nabla}_{A_{n,r}}f\in\mathbb{R}^{I_{n}\times L_r}$ be the stochastic gradient of $f(A_{1},A_{2},A_{3})$ for $A_{n,r}$, $n = 1, 2, 3$, then we have
	\begin{eqnarray}
		\begin{aligned}
			\tilde{\nabla}_{A_{n,r}}f(A_{1},A_{2},A_{3})&=\frac{1}{{I_n}\left|\mathcal{F}_{n}\right|}\left(A_{n} H_{n}^{\top}\left(\mathcal{F}_{n}\right) H_{n,r}\left(\mathcal{F}_{n}\right)-X_{n}^{\top}\left(\mathcal{F}_{n}\right) H_{n,r}\left(\mathcal{F}_{n}\right)\right), 
		\end{aligned}\label{sto_gradient}
	\end{eqnarray}
 where 
\[
X_{n}\left(\mathcal{F}_{n}\right)=X_{n}\left(\mathcal{F}_{n},:\right), \quad H_{n}\left(\mathcal{F}_{n}\right)=H_{n}\left(\mathcal{F}_{n},:\right).
\]
At each iteration, we choose one index $n$ from $\{1,\dots,N\}$ in order and randomly sample some fiber indexes $\mathcal F_n$ from $\{1,\dots,J_n\}$. Figure \ref{fiber} shows the fiber-sampling strategy for the stochastic gradient with batch size $\left|\mathcal{F}_{n}\right|=3$. 
\begin{figure}[!htb]
\setlength\tabcolsep{2pt}
\centering
\begin{tabular}{cccccc}
\includegraphics[width=0.8\textwidth]{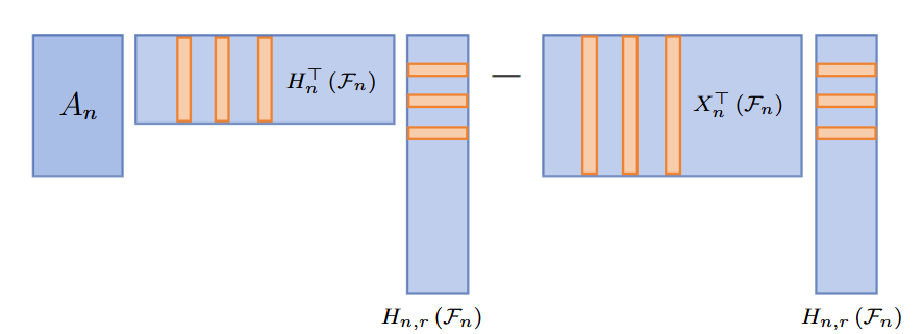}
\end{tabular}
\caption{Fiber-sampling strategy with batchsize $\left|\mathcal{F}_{n}\right|=3$.}
\label{fiber}
\end{figure}

We compute the multi-step inertial accelerated stochastic gradient estimator $\tilde{\nabla}_{A_{n,r}}f(\dots,\underline{A}_n^k,\dots)$, shortly denoted as  $\tilde{\nabla}_{A_{n,r}}f(\underline{A}_n^k)$. At last, we update $A_n^{k+1}$ by the stochastic proximal gradient descent method.

The algorithmic framework of Midas-LL1 (Algorithm \ref{ipSGD}) for optimization problem \eqref{regular_LL1D} is presented as follows. 

\begin{algorithm}[H]
	\caption{Midas-LL1: multi-step inertial accelerated doubly SGD for rank-$(L_r, L_r, 1)$ block-term decomposition problem \eqref{regular_LL1D}}
	\label{ipSGD}
	{\bfseries Input:} A third-order tensor $\mathcal{X}\in\mathbb{R}^{I_{1}\times I_{2} \times I_{3}}$; the rank of partitioned factors in vector $L=[L_1,\cdots,L_R]$; the number of components $R$;  the sample size $B$; initialization $\{A_{n}^{-1}\}_{n=1}^{3}=\{A_{n}^{0}\}_{n=1}^{3}$, setting stepsize $\{\eta^{k}\}_{k\ge0}$;  an integer $t \geq 1$, and inertial parameters $\{\alpha^{k+1-i}\}_{k\ge 0},\{\beta^{k+1-i}\}_{k\ge 0}\in[0,1]$ with $i \in \{1,\dots, t\}$. 
	\begin{algorithmic}[1]
		\State $k\leftarrow 0$;
		\Repeat 
		\State Sample $n$ uniformly  from $\{1, \dots, N\}$.
		\State Sample $\mathcal{F}_{n}$ uniformly from $\{1,\dots,J_{n}\}$ with $|\mathcal{F}_{n}|=B$.
		\State {Set  $\tilde{A}_n^k=A_n^k+\sum_{i=1}^t \alpha^{k+1-i}\left(A_n^{k+1-i}-A_n^{k-i}\right)$,  $\underline{A}_n^k=A_n^k+\sum_{i=1}^t \beta^{k+1-i}\left(A_n^{k+1-i}-A_n^{k-i}\right)$. }
		\State Compute  stochastic gradient $\tilde{\nabla}_{A_{n,r}}f(\underline{A}_n^k)$ with the minibatch  $B$ that satisfies Definition~\ref{vr_definition}. 
		\State Update partitioned matrices $A_{n,r}^{k+1}$ of $A_{n}^{k+1}$ for $r=1,\ldots,R$ and $A_{n^{\prime}}^{k+1}$:
		\begin{eqnarray}
		  \begin{aligned}\label{Anrk_update}
			A_{n,r}^{k+1} &= \underset{A_{n,r}}{\arg\min}\,\,h_{n}\left(A_{n,r}\right)+\langle\tilde{\nabla}_{A_{n,r}}f(\underline{A}_n^k), A_{n,r}-(\tilde{A}_{n,r})^{k}\rangle+\frac{1}{2\eta^{k}} \|A_{n,r}-(\tilde{A}_{n,r})^{k}\|_{F}^{2}, \\
			A_{n^{\prime}}^{k+1} &= A_{n^{\prime}}^{k}, \quad \forall n^{\prime} \neq n.
		\end{aligned}  
		\end{eqnarray}
		
		\State $k\leftarrow k+1$;
		\Until{some stopping criterion is reached;}
	\end{algorithmic}
	{\bfseries Output:} $\{A_{n}^{k}\}_{n=1}^{N}$.
\end{algorithm}

\begin{remark}
The Midas-LL1 serves as a general algorithm framework: 
\begin{itemize}
\item[(i)] If $L_r=1$ for $r=1,\ldots, R$, and $t=1$, then Midas-LL1 (Algorithm \ref{ipSGD}) is equivalent to iBrasCPD \cite{Wang2023}. 
\item[(ii)] If $L_r=1$ for $r=1,\ldots, R$, and $t=0$, then  Midas-LL1 algorithm  is similar to BrasCPD \cite{FuIWGH20} with vanilla SGD.

\end{itemize}
\end{remark}

\begin{assumption}\label{assume_ipsgd}
	\begin{itemize}
		\item[(i)] $h_{n}: \mathbb{R}^{I_{n}\times R}\rightarrow\mathbb{R}\cup \{+\infty\}$ are proper lower semi-continuous (l.s.c.) functions that are bounded from below, and $h(\cdot)$ is convex.
		\item[(ii)] The sequence $\{A_1^{k}, A_2^{k}, A_3^{k}\}$ generated by Algorithm \ref{ipSGD} is bounded for all $k$.
  \end{itemize}
  \end{assumption}
  
\begin{remark}\label{remark_03}
\begin{itemize}
\item[(i)] Suppose   Assumption \ref{assume_ipsgd}~(ii) holds, then for any factors   $\{A_1, A_2, A_3\}$,  $\{\underline A_{1}, \underline A_2,\underline A_{3}\}$ and any mode $n\in\{1, 2, 3\}$, there exists a constant $L$ such that
  \[
\begin{aligned}
\|\nabla_{{A}_{n}}f\left(\dots,\bar A_n,\dots\right)-\nabla_{A_{n}}f\left(\dots, A_n,\dots\right)\|_{F}
\le L\|\bar{A}_{n}-A_{n}\|_{F},
\end{aligned}
\]
which shows that 
\begin{eqnarray}
\begin{aligned}
|f(\dots, A_n, \dots)-f(\dots, \underline A_n, \dots)-\left<\nabla_{A_{n}}f(\dots, \underline A_n, \dots ), A_n-\underline {A}_{n}\right>| \le\frac{L}{2}\|A_n-\bar{A}_{n}\|^{2}_{F}.
\end{aligned}\label{L-lip}
\end{eqnarray}
In fact,     $f(A_{1}, A_2, A_{3})$  is quadratic with respect to $A_{n}$, for all $n\in \{1, 2, 3\}$, and the quadratic term is  $\frac{1}{2I^N}A_n^{\top}(H_n^{\top}H_n)A_n$. Hence,  we have  \eqref{L-lip} holds  with  
\[
L\ge \frac{1}{I^3}\max\{ \lambda_{\max}({H}_{n}^{\top}{H}_{n})\}
\]
for any $n\in\{1,2,3\}$. 
\item[(ii)] The optimal objective value  $\Phi^{*}$ of the objective function $\Phi(A_1, A_2, A_{3})$ defined by \eqref{regular_LL1D}   is finite under Assumption  \ref{assume_ipsgd}~(ii). 
\end{itemize}
\end{remark}

Let $\xi^k$ and $\zeta^k$  be the stochastic parameters for the block index and the stochastic gradient, respectively. Denote $\mathbb E_k[\cdot]=\mathbb E[\cdot |  \xi^k,\zeta^k]$ and $\mathbb E[\cdot]=\mathbb E[\cdot |\xi^0,\zeta^0,\dots]$.

\begin{definition}(Multi-step variance-reduced stochastic gradient)\label{vr_definition}
	We say a gradient estimator $\tilde{\nabla}_{A_{\xi^k}}f$ with $\xi^k$ randomly selected from the index set $\{1,2,3\}$ ,  is variance-reduced with constants $V_{1}, V_{2},V_{\Gamma}\ge 0$, $t\ge1$, and $\tau\in(0,1]$ if it satisfies the following conditions:
	\begin{itemize}
		\item[(i)] (Mean squared error (MSE) bound): there exists a sequence of random variables $\{\Gamma_{k}\}_{k\ge1}$ such that
		
			\begin{eqnarray}
				\begin{aligned}
	\mathbb{E}_{k}[\|\tilde{\nabla}_{A_{\xi^k}}f(\underline{A}_{\xi^{k}}^{k})-\nabla_{A_{\xi^k}}f(\underline{A}_{\xi^k}^{k})\|_{F}^{2}]
	\le\Gamma_{k}+V_1 \,\left(\sum_{i=1}^{t+1} \left\|A^{k+1-i}-A^{k-i}\right\|_{F}^2\right),
\end{aligned}\label{MSE_l22}
\end{eqnarray}

and  random variables $\{\Upsilon_{k}\}_{k\ge1}$ such that

\begin{eqnarray}
\begin{aligned}
\mathbb{E}_{k}[\|\tilde{\nabla}_{A_{\xi^k}}f(\underline{A}_{\xi^{k}}^{k})-\nabla_{A_{\xi^k}}f(\underline{A}_{\xi^{k}}^{k})\|_{F}]
\le\Upsilon_{k}+ V_2 \,\left(\sum_{i=1}^{t+1}\left\|A^{k+1-i}-A^{k-i}\right\|_{F}\right).
\end{aligned}\label{MSE_l2}
\end{eqnarray}

\item[(ii)] (Geometric decay): The sequence $\{\Gamma_{k}\}_{k\ge1}$ satisfy the following inequality in expectation:
\begin{eqnarray}
\begin{aligned}
\mathbb{E}_{k}[\Gamma_{k+1}]\le& (1-\tau)\Gamma_{k}+V_{\Gamma}\,\left(\sum_{i=1}^{t+1} \left\|A^{k+1-i}-A^{k-i}\right\|_{F}^2\right).\label{Gamma_k1_k}
\end{aligned}
\end{eqnarray}
\item[(iii)] (Convergence of estimator): For all sequences $\{A^{k}\}_{k=0}^{\infty}$, if  
$\lim_{k\rightarrow\infty}\mathbb{E}\left\|A^{k}-A^{k-1}\right\|_{F}^{2}\rightarrow 0$, then it follows that $\mathbb{E}\Gamma_{k}\rightarrow0$ and $\mathbb{E}\Upsilon_{k}\rightarrow0$.
\end{itemize}
\end{definition}

\section{Convergence analysis}\label{convergence_analysis}
This section is dedicated to the convergence analysis of the sequence generated by Algorithm \ref{ipSGD}.
\subsection{Subsequential convergence analysis}

We first present the   descent of $\Phi(A_1^{k+1}, A_2^{k+1}, A_3^{k+1})$ under expectation  in the following lemma.

\begin{lemma}\label{lemma_Phi_kk1}
Suppose  $\{A_1^{k}, A_2^{k}, A_3^{k}\}_{k\in\mathbb{N}}$ is the sequence generated by Algorithm \ref{ipSGD}. Assume that Assumption \ref{assume_ipsgd} holds  and  $\tilde{\nabla}_{A_{n,r}}f$ with $n=1,2,3$ is variance-reduced as defined in Definition \ref{vr_definition}. Define the following quantities for $k \in \mathbb{N}$,
\begin{equation}\label{bk}
  b_k \stackrel{\text { def }}{=} \frac{1-\sum_{i=1}^t\alpha^{k+1-i}-2L\eta^k -\gamma\eta^k }{2 \eta^k}, \quad
\underline{b} \stackrel{\text { def }}{=} \liminf _{k \in \mathbb{N}} b_k,  
\end{equation}
and
\begin{equation}\label{aki}
 a_{k, i} \stackrel{\text { def }}{=} \frac{3Lt\left(\beta^{k+1-i}\right)^2}{2}+\frac{\gamma}{2}+\frac{\alpha^{k+1-i}}{2\eta^{k}}, \quad \bar{a}_i \stackrel{\text { def }}{=} \limsup _{k \in \mathbb{N}} a_{k, i}.
\end{equation}
Here,  $\gamma=\sqrt{(V_{\Gamma}/\tau+V_{1})}$,  $V_{1}, V_{2},V_{\Gamma}\ge 0$, $\tau\in(0,1]$ are parameters in Definition~\ref{vr_definition}, and $\{\alpha^{k+1-i}\}_{k\ge 0}$, $\{\beta^{k+1-i}\}_{k\ge 0}\in[0,1]$ are inertial parameters in Algorithm \ref{ipSGD}. Then the following inequality holds for any $k>0$, 
\begin{eqnarray*}
\begin{aligned}
&\mathbb{E}_{k}[\Phi(A_1^{k+1}, A_2^{k+1}, A_3^{k+1})]+\underline{b} \,\mathbb{E}_{k}\left[\left\|A^{k+1}-A^k\right\|_{F}^2\right]+\frac{1}{2\gamma\tau}\mathbb{E}_{k}[\Gamma_{k+1}] \\
\leq & \Phi(A_1^{k}, A_2^{k}, A_3^{k})+\frac{1}{2\gamma\tau}\Gamma_{k}+\sum_{i=1}^t \bar a_{ i}\left\|A^{k+1-i}-A^{k-i}\right\|_{F}^2+\frac{\gamma}{2} \left\|A^{k-t}-A^{k-t-1}\right\|_{F}^2.
\end{aligned}
\end{eqnarray*}
\end{lemma}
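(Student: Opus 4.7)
The plan is to combine a per-iteration descent bound for $f$, coming from the block-wise Lipschitz smoothness \eqref{L-lip} applied at the inertial point $\underline A^k$, with the descent inequality for the proximal surrogate minimized in \eqref{Anrk_update}, then take the conditional expectation $\mathbb{E}_k[\cdot]$ and absorb the stochastic gradient error via Definition~\ref{vr_definition}. The geometric decay \eqref{Gamma_k1_k} is what lifts $\Phi$ into the Lyapunov-type quantity $\Phi+\frac{1}{2\gamma\tau}\Gamma$; the specific choice $\gamma=\sqrt{V_\Gamma/\tau+V_1}$ is engineered so that the $V_1$-term from \eqref{MSE_l22} and the $V_\Gamma$-term from \eqref{Gamma_k1_k} collapse into a single coefficient $\gamma/2$ per history direction.

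\textbf{Step 1 ($f$-side).} Let $n=\xi^k$; since only block $n$ changes, I invoke \eqref{L-lip} in both directions anchored at $\underline A^k$ (upper bound for $A^{k+1}$, lower bound for $A^k$), subtract, and use $\|A_n^{k+1}-\underline A_n^k\|_F^2\le 2\|A_n^{k+1}-A_n^k\|_F^2+2\|A_n^k-\underline A_n^k\|_F^2$ together with the Cauchy--Schwarz bound $\|A_n^k-\underline A_n^k\|_F^2\le t\sum_{i=1}^t(\beta^{k+1-i})^2\|A^{k+1-i}-A^{k-i}\|_F^2$. This contributes $\frac{3Lt(\beta^{k+1-i})^2}{2}$ to $a_{k,i}$ and an $L\|A^{k+1}-A^k\|_F^2$ term that will build $-b_k$.

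\textbf{Step 2 ($h_n$-side) and merge.} The optimality of $A_n^{k+1}$ in \eqref{Anrk_update} tested against $A_n=A_n^k$ gives $h_n(A_n^{k+1})-h_n(A_n^k)+\langle\tilde\nabla_{A_n}f(\underline A^k),A_n^{k+1}-A_n^k\rangle\le\frac{1}{2\eta^k}(\|A_n^k-\tilde A_n^k\|_F^2-\|A_n^{k+1}-\tilde A_n^k\|_F^2)$. Expanding the quadratic difference reveals the $-\frac{1}{2\eta^k}\|A^{k+1}-A^k\|_F^2$ piece of $b_k$; substituting $\tilde A_n^k-A_n^k=\sum_{i=1}^t\alpha^{k+1-i}(A^{k+1-i}-A^{k-i})$ and applying Young's termwise contributes $\frac{\alpha^{k+1-i}}{2\eta^k}$ to $a_{k,i}$ and $\frac{\sum_{i=1}^t\alpha^{k+1-i}}{2\eta^k}$ to $-b_k$. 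Adding the Step~1 and Step~2 inequalities cancels the true gradient and leaves the stochastic error $\langle\nabla-\tilde\nabla,A_n^{k+1}-A_n^k\rangle$; Young's with weight $\gamma$ splits it into $\frac{\gamma}{2}\|A^{k+1}-A^k\|_F^2$ (absorbed into $b_k$) and $\frac{1}{2\gamma}\|\nabla-\tilde\nabla\|_F^2$. At this point the coefficient of $\|A^{k+1}-A^k\|_F^2$ on the right equals precisely $-b_k$ as defined in \eqref{bk}.

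\textbf{Step 3 (take $\mathbb{E}_k$ and form the Lyapunov).} Apply \eqref{MSE_l22} to get $\frac{1}{2\gamma}\mathbb{E}_k[\|\nabla-\tilde\nabla\|_F^2]\le\frac{1}{2\gamma}\Gamma_k+\frac{V_1}{2\gamma}\sum_{i=1}^{t+1}\|A^{k+1-i}-A^{k-i}\|_F^2$, and add $\frac{1}{2\gamma\tau}$ times \eqref{Gamma_k1_k}, rearranged as $\mathbb{E}_k[\Gamma_{k+1}]-\Gamma_k\le -\tau\Gamma_k+V_\Gamma\sum_{i=1}^{t+1}\|A^{k+1-i}-A^{k-i}\|_F^2$. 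This produces a $-\frac{1}{2\gamma}\Gamma_k$ that cancels the $+\frac{1}{2\gamma}\Gamma_k$ above, and the remaining history coefficient $\frac{V_1+V_\Gamma/\tau}{2\gamma}$ collapses to $\frac{\gamma}{2}$ by the choice of $\gamma$. Peel off the $i=t+1$ summand as the isolated $\frac{\gamma}{2}\|A^{k-t}-A^{k-t-1}\|_F^2$ term in the statement; merging the residual $\frac{\gamma}{2}$ for $i=1,\ldots,t$ with the coefficients collected in Steps 1--2 reconstructs $a_{k,i}$ exactly. Finally pass to $\underline b,\bar a_i$ via \eqref{bk}--\eqref{aki}, using that $\underline b\le b_k$ and $a_{k,i}\le\bar a_i$ allow one to weaken the coefficients in the direction that preserves the inequality.

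\textbf{Main obstacle.} The difficulty is not any single inequality but the orchestration of three index windows: the $t$-term inertial combinations generate indices $i=1,\ldots,t$, whereas the variance-reduction bounds in Definition~\ref{vr_definition} go up to $i=t+1$, and the $\Gamma_k$ and $\|\nabla-\tilde\nabla\|_F^2$ contributions must be balanced so that $\Gamma_k$ cancels exactly and the residual history coefficient packages cleanly into $\gamma/2$. Without the precise calibration $\gamma=\sqrt{V_\Gamma/\tau+V_1}$, the $V_1$ and $V_\Gamma$ pieces do not merge and the Lyapunov descent fails. A secondary subtlety is that $\langle\nabla-\tilde\nabla,A_n^{k+1}-A_n^k\rangle$ cannot be removed by unbiasedness, because $A_n^{k+1}$ itself depends on $\tilde\nabla$; the Young's splitting must therefore be applied pathwise before taking $\mathbb{E}_k[\cdot]$.
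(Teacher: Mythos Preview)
The proposal is correct and follows essentially the same route as the paper: the two-sided Lipschitz descent at $\underline A^k$, the proximal optimality tested at $A_n^k$, Young's inequality with weight $\gamma$ on the stochastic error, the termwise Young expansion of the $\alpha$-inertial difference, and the combination of \eqref{MSE_l22} with \eqref{Gamma_k1_k} under the calibration $\gamma=\sqrt{V_\Gamma/\tau+V_1}$ all match the paper's argument step for step. Your direct expansion of $\|A^k-\tilde A^k\|_F^2-\|A^{k+1}-\tilde A^k\|_F^2$ is in fact slightly cleaner than the paper's, since the $\|A^k-\tilde A^k\|_F^2$ terms cancel immediately rather than being carried separately and matched against the lower bound for $\|A^{k+1}-\tilde A^k\|_F^2$.
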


\begin{proof}
Assume $n=\xi^{k}$ at the $k$-th iteration. Based on the update rule for $A_{\xi^k,r}^{k+1}$ in Midas-LL1 framework (Algorithm \ref{ipSGD}), we establish that
\[
\begin{aligned}
A_{\xi^k,r}^{k+1} &= \underset{A_{\xi^k,r}}{\arg \min}\,\,h_{\xi^k}\left(A_{\xi^k,r}\right)+\langle\tilde{\nabla}_{A_{\xi^k,r}}f(\underline{A}_n^k), A_{\xi^k,r}-\tilde{A}_{\xi^k,r}^{k}\rangle+\frac{1}{2\eta^{k}} \|A_{\xi^k,r}-\tilde{A}_{\xi^k,r}^{k}\|_{F}^{2}. 
\end{aligned}
\] 
The inequality below holds by definition of $A_{\xi^{k},r}^{k+1}$
	\begin{eqnarray}\label{h}
	    \begin{aligned}
		&\sum_{r=1}^R\left(h_{\xi^{k}}(A_{\xi^k,r}^{k+1})+\left\langle \tilde{\nabla}_{A_{\xi^k,r}}f(\underline{A}_{\xi^{k}}^k), A_{\xi^k,r}^{k+1}-\tilde A_{\xi^k,r}^{k}\right\rangle +\frac{1}{2\eta^{k}}\left\|A_{\xi^k,r}^{k+1}-\tilde A_{\xi^k,r}^{k}\right\|_{F}^{2}\right)\\
		\le&\sum_{r=1}^R\left(h_{\xi^{k}}(A_{\xi^k,r}^{k})+\left\langle \tilde{\nabla}_{A_{\xi^k,r}}f(\underline{A}_{\xi^{k}}^k),A_{\xi^k,r}^{k}-\tilde A_{\xi^k,r}^{k}\right\rangle +\frac{1}{2\eta^{k}}\left\|A_{\xi^k,r}^{k}-\tilde A_{\xi^k,r}^{k}\right\|_{F}^{2}\right).
	\end{aligned}
	\end{eqnarray}
According to  \eqref{L-lip}, we derive
$$
\begin{aligned}
f(A_1^{k+1}, A_2^{k+1}, A_3^{k+1}) &\leq f( \cdots, \underline{A}_{\xi^{k}}^k, \cdots )\\
&+\sum_{r=1}^R\left\langle{\nabla}_{A_{\xi^k,r}}f(\underline{A}_{\xi^{k}}^k), A_{\xi^k,r}^{k+1}-\underline A_{\xi^k,r}^{k}\right\rangle +\sum_{r=1}^R\frac{L}{2}\left\|A_{\xi^k,r}^{k+1}-\underline A_{\xi^k,r}^{k}\right\|_{F}^{2},
\end{aligned}
$$
and
$$
\begin{aligned}
      f(A_1^{k}, A_2^{k}, A_3^{k}) &\geq f( \cdots, \underline{A}_{\xi^{k}}^k,\cdots )\\
      &+ \sum_{r=1}^R\left\langle{\nabla}_{A_{\xi^k,r}}f(\underline{A}_{\xi^{k}}^k), A_{\xi^k,r}^{k}-\underline A_{\xi^k,r}^{k}\right\rangle -\sum_{r=1}^R\frac{L}{2}\left\|A_{\xi^k,r}^{k}-\underline A_{\xi^k,r}^{k}\right\|_{F}^{2}.
\end{aligned}
$$
Combining two inequalities about $f$, we can get
\begin{eqnarray}\label{f}
\begin{aligned}
f(A_1^{k+1}, A_2^{k+1}, A_3^{k+1}) &\leq   f(A_1^{k}, A_2^{k}, A_3^{k})+\sum_{r=1}^R\left\langle{\nabla}_{A_{\xi^k,r}}f(\underline{A}_{\xi^{k}}^k), A_{\xi^k,r}^{k+1}-A_{\xi^k,r}^{k}\right\rangle\\
&+\sum_{r=1}^R\frac{L}{2}\left\|A_{\xi^k,r}^{k+1}-\underline A_{\xi^k,r}^{k}\right\|_{F}^{2}+\sum_{r=1}^R\frac{L}{2}\left\|A_{\xi^k,r}^{k}-\underline A_{\xi^k,r}^{k}\right\|_{F}^{2}.
\end{aligned} 
\end{eqnarray}

By summing inequalities \eqref{h} and  \eqref{f}  together, we derive
\begin{eqnarray}
\begin{aligned}
&\Phi(A_1^{k+1}, A_2^{k+1}, A_3^{k+1}) \\
 \leq& \Phi(A_1^{k}, A_2^{k}, A_3^{k}) + \sum_{r=1}^R\left\langle{\nabla}_{A_{\xi^k,r}}f(\underline{A}_{\xi^{k}}^k)-\tilde{\nabla}_{A_{\xi^k,r}}f(\underline{A}_{\xi^{k}}^k), A_{\xi^k,r}^{k+1}-A_{\xi^k,r}^{k}\right\rangle\\
 &+\sum_{r=1}^R\frac{L}{2}\left\|A_{\xi^k,r}^{k+1}-\underline A_{\xi^k,r}^{k}\right\|_{F}^{2}+\sum_{r=1}^R\frac{L}{2}\left\|A_{\xi^k,r}^{k}-\underline A_{\xi^k,r}^{k}\right\|_{F}^{2}\\
 &+\sum_{r=1}^R\frac{1}{2\eta^{k}}\left\|A_{\xi^k,r}^{k}-\tilde A_{\xi^k,r}^{k}\right\|_{F}^{2}-\sum_{r=1}^R\frac{1}{2\eta^{k}}\left\|A_{\xi^k,r}^{k+1}-\tilde A_{\xi^k,r}^{k}\right\|_{F}^{2}\\
 \leq& \Phi(A_1^{k}, A_2^{k}, A_3^{k}) + \frac{1}{2\gamma^{k}}  \left\|\tilde{\nabla}_{A_{\xi^k}}f(\underline{A}_{\xi^{k}}^{k})-\nabla_{A_{\xi^k}}f(\underline{A}_{\xi^k}^{k})\right\|_{F}^{2}+ \frac{\gamma^{k}}{2}\left\|A_{\xi^k}^{k+1}-A_{\xi^k}^{k}\right\|_{F}^2\\
 &+\frac{3L}{2}\left\|\sum_{i=1}^t \beta^{k+1-i}(A_{\xi^k}^{k+1-i}-A_{\xi^k}^{k-i})\right\|_{F}^2+\frac{1}{2 \eta^k}  \left\|\sum_{i=1}^t \alpha^{k+1-i}(A_{\xi^k}^{k+1-i}-A_{\xi^k}^{k-i})\right\|_{F}^2\\
 &+ L\left\|A_{\xi^k}^{k+1}-A_{\xi^k}^{k}\right\|_{F}^2-\frac{1}{2\eta^{k}}\left\|A_{\xi^k}^{k+1}-\tilde A_{\xi^k}^{k}\right\|_{F}^{2}\\
 \leq& \Phi(A_1^{k}, A_2^{k}, A_3^{k}) + \frac{1}{2\gamma^{k}}  \left\|\tilde{\nabla}_{A_{\xi^k}}f(\underline{A}_{\xi^{k}}^{k})-\nabla_{A_{\xi^k}}f(\underline{A}_{\xi^k}^{k})\right\|_{F}^{2}+ \frac{\gamma^{k}+2L}{2}\left\|A_{\xi^k}^{k+1}-A_{\xi^k}^{k}\right\|_{F}^2\\
 &+\frac{3L}{2}\sum_{i=1}^t t\left(\beta^{k+1-i}\right)^2\left\|A_{\xi^k}^{k+1-i}-A_{\xi^k}^{k-i}\right\|_{F}^2+\frac{1}{2 \eta^k}  \left\|\sum_{i=1}^t \alpha^{k+1-i}(A_{\xi^k}^{k+1-i}-A_{\xi^k}^{k-i})\right\|_{F}^2\\
 &-\frac{1}{2\eta^{k}}\left\|A_{\xi^k}^{k+1}-\tilde A_{\xi^k}^{k}\right\|_{F}^{2}\\
 \leq&\Phi(A_1^{k}, A_2^{k}, A_3^{k}) +  \frac{1}{2\gamma^{k}}  \left\|\tilde{\nabla}_{A_{\xi^k}}f(\underline{A}_{\xi^{k}}^{k})-\nabla_{A_{\xi^k}}f(\underline{A}_{\xi^k}^{k})\right\|_{F}^{2}+ \frac{\gamma^{k}+2L}{2}\left\|A_{\xi^k}^{k+1}-A_{\xi^k}^{k}\right\|_{F}^2\\
 &+ \frac{3L}{2}\sum_{i=1}^t t\left(\beta^{k+1-i}\right)^2\left\|A_{\xi^k}^{k+1-i}-A_{\xi^k}^{k-i}\right\|_{F}^2+\sum_{i=1}^t \frac{\alpha^{k+1-i}}{2\eta^{k}}\left\|A_{\xi^k}^{k+1-i}-A_{\xi^k}^{k-i}\right\|_{F}^{2}\\
 &-\frac{(1-\sum_{i=1}^t\alpha^{k+1-i})}{2\eta^{k}}\left\|A_{\xi^k}^{k+1}-A_{\xi^k}^{k}\right\|_{F}^{2},\\
\end{aligned}\label{ineq_01}
\end{eqnarray}
where $\gamma^{k}>0$ is any constant, the second inequality is derived from Young's inequality  $\langle a,b\rangle \le \frac{1}{2c}\|a\|^{2}+\frac{c}{2}\|b\|^{2}$ for $a, b\in\mathbb{R}^n, c\in\mathbb{R}_{++}$. The third inequality is obtained from 
\begin{equation*}
    \left\|A_{\xi^k}^{k}-\underline A_{\xi^k}^{k}\right\|_{F}^{2}=\left\|\sum_{i=1}^t \beta^{k+1-i}(A_{\xi^k}^{k+1-i}-A_{\xi^k}^{k-i})\right\|_{F}^2\leq \sum_{i=1}^t t\left(\beta^{k+1-i}\right)^2\left\|A_{\xi^k}^{k+1-i}-A_{\xi^k}^{k-i}\right\|_{F}^2,
\end{equation*}
and the last inequality is deduced from
        \[
	\begin{aligned}
        &\left\|A_{\xi^k}^{k+1}-\tilde A_{\xi^k}^{k}\right\|_{F}^{2}\\=
		&\left\|A_{\xi^k}^{k+1}-A_{\xi^k}^{k}-\sum_{i=1}^t \alpha^{k+1-i}(A_{\xi^k}^{k+1-i}-A_{\xi^k}^{k-i})\right\|_{F}^{2}\\
		=&\left\|A_{\xi^k}^{k+1}-A_{\xi^k}^{k}\right\|_{F}^{2}-2\sum_{i=1}^t \alpha^{k+1-i}\left\langle A_{\xi^k}^{k+1}-A_{\xi^k}^{k},A_{\xi^k}^{k+1-i}-A_{\xi^k}^{k-i}\right\rangle + \left\|\sum_{i=1}^t \alpha^{k+1-i}(A_{\xi^k}^{k+1-i}-A_{\xi^k}^{k-i})\right\|_{F}^{2}\\
		\geq &(1-\sum_{i=1}^t\alpha^{k+1-i})\left\|A_{\xi^k}^{k+1}-A_{\xi^k}^{k}\right\|_{F}^{2}
        +\left\|\sum_{i=1}^t \alpha^{k+1-i}(A_{\xi^k}^{k+1-i}-A_{\xi^k}^{k-i})\right\|_{F}^{2}-\sum_{i=1}^t \alpha^{k+1-i}\left\|A_{\xi^k}^{k+1-i}-A_{\xi^k}^{k-i}\right\|_{F}^{2}.
	\end{aligned}
	\]
 Applying the conditional expectation operator $\mathbb{E}_{k}$ to the inequality \eqref{ineq_01} and constraining the mean squared error  term by \eqref{MSE_l22} as specified in Definition \ref{vr_definition}, we obtain
\begin{eqnarray}
\begin{aligned}
&\mathbb{E}_{k}[\Phi(A_1^{k+1}, A_2^{k+1}, A_3^{k+1})] \\
\leq & \Phi(A_1^{k}, A_2^{k}, A_3^{k})+\frac{1}{2\gamma^{k}}\mathbb{E}_{k}[\|\tilde{\nabla}_{A_{\xi^k}}f(\underline{A}_{\xi^{k}}^{k})-\nabla_{A_{\xi^k}}f(\underline{A}_{\xi^k}^{k})\|_{F}^{2}]+\frac{\gamma^{k}+2L}{2}\mathbb{E}_{k}[\|A^{k+1}-A^k\|_{F}^2]\\
 &+ \frac{3L}{2} \sum_{i=1}^t t\left(\beta^{k+1-i}\right)^2\|A^{k+1-i}-A^{k-i}\|_{F}^2+\sum_{i=1}^t \frac{\alpha^{k+1-i}}{2\eta^{k}}\|A^{k+1-i}-A^{k-i}\|_{F}^{2}\\
  &-\frac{(1-\sum_{i=1}^t\alpha^{k+1-i})}{2\eta^{k}}\mathbb{E}_{k}[\|A^{k+1}-A^k\|_{F}^2]\\
\leq & \Phi(A_1^{k}, A_2^{k}, A_3^{k})+\frac{1}{2\gamma^{k}\tau}(\Gamma_{k}-\mathbb{E}_{k}[\Gamma_{k+1}])+\left(\frac{\gamma^{k}+2L}{2}-\frac{(1-\sum_{i=1}^t\alpha^{k+1-i})}{2\eta^k}\right)\mathbb{E}_{k}[\|A^{k+1}-A^k\|_{F}^2]\\
&+\left(\frac{V_{\Gamma}}{2\gamma^{k}\tau }+\frac{V_{1}}{2\gamma^{k}}\right)(\sum_{i=1}^{t+1} \|A^{k+1-i}-A^{k-i}\|_{F}^2)+ \frac{3L}{2}\sum_{i=1}^t t\left(\beta^{k+1-i}\right)^2\|A^{k+1-i}-A^{k-i}\|_{F}^2\\
&+\sum_{i=1}^t \frac{\alpha^{k+1-i}}{2\eta^{k}}\|A^{k+1-i}-A^{k-i}\|_{F}^{2}.
\end{aligned}\label{ineq_002}
\end{eqnarray}
Let  $\gamma=\gamma^{k}=\sqrt{(V_{\Gamma}/\tau+V_{1})}$, then we can get

\begin{eqnarray*}
\begin{aligned}
&\mathbb{E}_{k}[\Phi(A_1^{k+1}, A_2^{k+1}, A_3^{k+1})]+\left(\frac{(1-\sum_{i=1}^t\alpha^{k+1-i})}{2\eta^k}-\frac{\gamma+2L}{2}\right)\mathbb{E}_{k}[\|A^{k+1}-A^k\|_{F}^2]+\frac{1}{2\gamma\tau}\mathbb{E}_{k}[\Gamma_{k+1}] \\
\leq & \Phi(A_1^{k}, A_2^{k}, A_3^{k})+\frac{1}{2\gamma\tau}\Gamma_{k}+\sum_{i=1}^t \left(\frac{3Lt\left(\beta^{k+1-i}\right)^2}{2}+\frac{\gamma}{2}+\frac{\alpha^{k+1-i}}{2\eta^{k}}\right)\|A^{k+1-i}-A^{k-i}\|_{F}^2\\
&+\frac{\gamma}{2} \|A^{k-t}-A^{k-t-1}\|_{F}^2.
\end{aligned}
\end{eqnarray*}

Denote $b_k$ and $a_{k,i}$ as in \eqref{bk} and \eqref{aki}, respectively.
Consequently, we obtain

\begin{eqnarray*}
\begin{aligned}
&\mathbb{E}_{k}[\Phi(A_1^{k+1}, A_2^{k+1}, A_3^{k+1})]+\underline{b}\, \mathbb{E}_{k}[\|A^{k+1}-A^k\|_{F}^2]+\frac{1}{2\gamma\tau}\mathbb{E}_{k}[\Gamma_{k+1}] \\
\leq &\mathbb{E}_{k}[\Phi(A_1^{k+1}, A_2^{k+1}, A_3^{k+1})]+b_{k}\mathbb{E}_{k}[\|A^{k+1}-A^k\|_{F}^2]+\frac{1}{2\gamma\tau}\mathbb{E}_{k}[\Gamma_{k+1}]\\
\leq & \Phi(A_1^{k}, A_2^{k}, A_3^{k})+\frac{1}{2\gamma\tau}\Gamma_{k}+\sum_{i=1}^t a_{k, i}\|A^{k+1-i}-A^{k-i}\|_{F}^2+\frac{\gamma}{2} \|A^{k-t}-A^{k-t-1}\|_{F}^2\\
\leq & \Phi(A_1^{k}, A_2^{k}, A_3^{k})+\frac{1}{2\gamma\tau}\Gamma_{k}+\sum_{i=1}^t \bar a_{ i}\|A^{k+1-i}-A^{k-i}\|_{F}^2+\frac{\gamma}{2} \|A^{k-t}-A^{k-t-1}\|_{F}^2.
\end{aligned}
\end{eqnarray*}
Hence, the proof is completed. 
\end{proof}

Following this, a new Lyapunov function is introduced. We prove that it is monotonically nonincreasing in expectation. To simplify notation, we denote $\Phi^k=\Phi\left(A_1^{k}, A_2^{k}, A_3^{k}\right)$.

\begin{lemma}\label{lyapunov_descent}
	    Assume that Assumption \ref{assume_ipsgd} holds and $\tilde{\nabla}_{A_{n,r}}f$ for $n=1,2,3$ is variance-reduced as defined in Definition \ref{vr_definition}. Let $\{A_1^{k}, A_2^{k}, A_3^{k}\}_{k \in \mathbb{N}}$ be a sequence generated by  Algorithm \ref{ipSGD}. We then define the following Lyapunov sequence
         \begin{eqnarray}
         \begin{aligned}
        \Psi_{k}:&= \Phi^{k}  +\sum_{i=1}^{t+1} \sum_{j=i}^{t+1}(j+1)\bar a_{j}\left\|A^{k+1-i}-A^{k-i}\right\|_{F}^2+\frac{1}{2\tau \gamma}\Gamma_{k}, \label{lyapunov_function}
        \end{aligned}
        \end{eqnarray}
       and choose $\eta^k$, $\gamma$, $\{\alpha^{k+1-i}\}_{k\ge0}$, and $\{\beta^{k+1-i}\}_{k\ge0}$ in \eqref{bk} and \eqref{aki} such that
      \begin{equation}\label{paraofLya}
           \delta = \underline{b}-\sum_{j=1}^{t+1}(j+1)\bar a_{j} > 0,\quad \mathrm{ and }\quad  (t+2) \bar{a}_{t+1}-\frac{\gamma}{2} > 0.
       \end{equation}
       Then, for all $k\in\mathbb{N}$, we have
       \begin{eqnarray}
       \begin{aligned}
       \mathbb{E}_{k}[\Psi_{k+1}]\le\Psi_{k} - \rho \left(\mathbb{E}_{k}[\|A^{k+1}-A^k\|_{F}^2]+\sum_{i=1}^{t+1} \left\|A^{k+1-i}-A^{k-i}\right\|_{F}^2\right),
      \end{aligned}\label{decent_inequality_01}
      \end{eqnarray} 
      where $\rho:=\min\{\delta, (t+2)\bar{a}_{t+1}-\frac{\gamma}{2}, \bar{a}\}$.
\end{lemma}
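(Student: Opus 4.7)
The plan is to decompose $\Psi_k - \mathbb{E}_k[\Psi_{k+1}]$ into three pieces, then apply Lemma \ref{lemma_Phi_kk1} to the $\Phi$ and $\Gamma$ parts while handling the historical sum-of-squares part by a careful index shift. Write $c_j := (j+1)\bar{a}_j$ for brevity. By definition,
\begin{equation*}
\Psi_k-\mathbb{E}_k[\Psi_{k+1}] = \bigl(\Phi^{k}-\mathbb{E}_k[\Phi^{k+1}]\bigr) + \tfrac{1}{2\tau\gamma}\bigl(\Gamma_k-\mathbb{E}_k[\Gamma_{k+1}]\bigr) + S_k - \mathbb{E}_k[S_{k+1}],
\end{equation*}
where $S_k := \sum_{i=1}^{t+1}\sum_{j=i}^{t+1} c_j\,\|A^{k+1-i}-A^{k-i}\|_F^2$. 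Lemma \ref{lemma_Phi_kk1} immediately lower-bounds the first two pieces by $\underline{b}\,\mathbb{E}_k[\|A^{k+1}-A^k\|_F^2]-\sum_{i=1}^{t}\bar{a}_i\|A^{k+1-i}-A^{k-i}\|_F^2-\tfrac{\gamma}{2}\|A^{k-t}-A^{k-t-1}\|_F^2$, so only the third piece needs attention.

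For that piece I reindex $S_{k+1}$ by substituting $m=i-1$ in $\sum_{i=1}^{t+1}\sum_{j=i}^{t+1}c_j\,\|A^{k+2-i}-A^{k+1-i}\|_F^2$ and peel off the $m=0$ term (the only stochastic one, which absorbs $\mathbb{E}_k$). The tail $\sum_{m=1}^{t}\sum_{j=m+1}^{t+1}c_j\,\|A^{k+1-m}-A^{k-m}\|_F^2$ then differs from the corresponding portion of $S_k$ by exactly $\sum_{j=i}^{t+1}c_j-\sum_{j=i+1}^{t+1}c_j = c_i$ in each block. The net outcome is
\begin{equation*}
S_k-\mathbb{E}_k[S_{k+1}] = -\Bigl(\sum_{j=1}^{t+1}c_j\Bigr)\mathbb{E}_k[\|A^{k+1}-A^k\|_F^2] + \sum_{i=1}^{t}c_i\|A^{k+1-i}-A^{k-i}\|_F^2 + c_{t+1}\|A^{k-t}-A^{k-t-1}\|_F^2.
\end{equation*}

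Combining with the bound from Lemma \ref{lemma_Phi_kk1} and using $c_i-\bar{a}_i=i\bar{a}_i$ for $i=1,\dots,t$ together with $c_{t+1}-\tfrac{\gamma}{2}=(t+2)\bar{a}_{t+1}-\tfrac{\gamma}{2}$ yields
\begin{equation*}
\Psi_k-\mathbb{E}_k[\Psi_{k+1}] \ge \delta\,\mathbb{E}_k[\|A^{k+1}-A^k\|_F^2] + \sum_{i=1}^{t} i\,\bar{a}_i\,\|A^{k+1-i}-A^{k-i}\|_F^2 + \bigl((t+2)\bar{a}_{t+1}-\tfrac{\gamma}{2}\bigr)\|A^{k-t}-A^{k-t-1}\|_F^2.
\end{equation*}
Each coefficient is strictly positive: the first by the first part of \eqref{paraofLya}, the last by the second part, and the middle ones because $\bar{a}_i\ge\gamma/2>0$ by \eqref{aki}. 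Taking $\rho$ as in the statement (the smallest of these coefficients, where $\bar{a}$ is understood as $\min_{1\le i\le t}i\bar{a}_i$) completes the argument.

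The main obstacle is the bookkeeping of the double sum: the coefficients $c_j=(j+1)\bar{a}_j$ in the Lyapunov function are precisely what the reindexing demands to produce a residual $c_i-\bar{a}_i=i\bar{a}_i\ge 0$ after absorbing the $\bar{a}_i$ terms from Lemma \ref{lemma_Phi_kk1}, and simultaneously to leave $(t+2)\bar{a}_{t+1}-\gamma/2$ at the tail to absorb the $\gamma/2$ term for the oldest history. Getting the index shift and the telescoping correct is where the argument is delicate; the rest is purely substitution.
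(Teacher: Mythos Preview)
Your argument is correct and follows essentially the same route as the paper's proof: both decompose $\Psi_k-\mathbb{E}_k[\Psi_{k+1}]$, invoke Lemma~\ref{lemma_Phi_kk1} for the $\Phi$ and $\Gamma$ pieces, and handle the double sum by the same index shift and telescoping to arrive at the identical intermediate bound $\delta\,\mathbb{E}_k[\|A^{k+1}-A^k\|_F^2]+\sum_{i=1}^{t}i\bar{a}_i\|A^{k+1-i}-A^{k-i}\|_F^2+\bigl((t+2)\bar{a}_{t+1}-\tfrac{\gamma}{2}\bigr)\|A^{k-t}-A^{k-t-1}\|_F^2$. The only discrepancy is in the final minorization: the paper takes $\bar a:=\min_{1\le i\le t}\bar a_i$ (using $i\bar a_i\ge \bar a_i\ge \bar a$), whereas you take $\bar a:=\min_{1\le i\le t} i\bar a_i$; both are valid lower bounds on the coefficients $i\bar a_i$, and yours is at least as sharp.
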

\begin{proof}
According to Lemma \ref{lemma_Phi_kk1}, we have
\begin{eqnarray}
\begin{aligned}
&\mathbb{E}_{k}[\Phi^{k+1}]-\Phi^{k}\\
\le& \, \sum_{i=1}^t \bar a_{ i}\|A^{k+1-i}-A^{k-i}\|_{F}^2+\frac{\gamma}{2} \|A^{k-t}-A^{k-t-1}\|_{F}^2-\underline{b}\, \mathbb{E}_{k}[\|A^{k+1}-A^k\|_{F}^2]\\
&-\frac{1}{2\gamma\tau}(\mathbb{E}_{k}[\Gamma_{k+1}]-\Gamma_{k}).\label{inequality_01}
\end{aligned}
\end{eqnarray}
Furthermore, by using \eqref{lyapunov_function} and integrating it  with \eqref{inequality_01}, we obtain
\begin{align*}
&\Psi_{k}-\mathbb{E}_{k}[\Psi_{k+1}]\\
=&\sum_{i=1}^{t+1} \sum_{j=i}^{t+1}(j+1)\bar a_{j}\|A^{k+1-i}-A^{k-i}\|_{F}^2-\sum_{i=1}^{t+1} \sum_{j=i}^{t+1}(j+1)\bar a_{j}\mathbb{E}_{k}\|A^{k+2-i}-A^{k+1-i}\|_{F}^2+\Phi^{k}-\mathbb{E}_{k}[\Phi_{k+1}]\\
&+\frac{1}{2\tau \gamma}(\Gamma_{k}-\mathbb{E}_{k}[\Gamma_{k+1}])\\
\geq&  \, \underline{b}\, \mathbb{E}_{k}[\|A^{k+1}-A^k\|_{F}^2]-\sum_{i=1}^t \bar a_{ i}\|A^{k+1-i}-A^{k-i}\|_{F}^2+\left((t+2)\bar{a}_{t+1}-\frac{\gamma}{2}\right) \|A^{k-t}-A^{k-t-1}\|_{F}^2\\
&+\sum_{i=1}^t \sum_{j=i}^{t+1} (j+1)\bar{a}_j\left\|A^{k+1-i}-A^{k-i}\right\|_{F}^2-\sum_{j=1}^{t+1} (j+1)\bar{a}_j \mathbb{E}_{k}[\left\|A^{k+1}-A^{k}\right\|_{F}^2]\\
&-\sum_{i=2}^{t+1} \sum_{j=i}^{t+1} (j+1)\bar{a}_j\left\|A^{k+2-i}-A^{k+1-i}\right\|_{F}^2\\
\geq&  \, (\underline{b}-\sum_{j=1}^{t+1}(j+1) \bar{a}_j) \mathbb{E}_{k}[\|A^{k+1}-A^k\|_{F}^2]+\left((t+2)\bar{a}_{t+1}-\frac{\gamma}{2}\right)  \|A^{k-t}-A^{k-t-1}\|_{F}^2\\
&+\sum_{i=1}^t \sum_{j=i}^{t+1} \bar{a}_j\left\|A^{k+1-i}-A^{k-i}\right\|_{F}^2-\sum_{i=1}^t \bar a_{ i}\|A^{k+1-i}-A^{k-i}\|_{F}^2-\sum_{i=2}^{t+1} \sum_{j=i}^{t+1} \bar{a}_j\left\|A^{k+2-i}-A^{k+1-i}\right\|_{F}^2\\
&+\sum_{i=1}^t \sum_{j=i}^{t+1} j \bar{a}_j\left\|A^{k+1-i}-A^{k-i}\right\|_{F}^2 -\sum_{i=2}^{t+1} \sum_{j=i}^{t+1} j \bar{a}_j\left\|A^{k+2-i}-A^{k+1-i}\right\|_{F}^2\\
\geq&\, (\underline{b}-\sum_{j=1}^{t+1}(j+1) \bar{a}_j) \mathbb{E}_{k}[\|A^{k+1}-A^k\|_{F}^2]+\left((t+2)\bar{a}_{t+1}-\frac{\gamma}{2}\right) \|A^{k-t}-A^{k-t-1}\|_{F}^2\\
&+\sum_{i=1}^t i \bar{a}_i\left\|A^{k+1-i}-A^{k-i}\right\|_{F}^2\\
\geq&\, \delta \,\mathbb{E}_{k}[\|A^{k+1}-A^k\|_{F}^2]+\left((t+2)\bar{a}_{t+1}-\frac{\gamma}{2}\right) \|A^{k-t}-A^{k-t-1}\|_{F}^2 + \blue{ \bar{a}}\sum_{i=1}^t \left\|A^{k+1-i}-A^{k-i}\right\|_{F}^2,
\end{align*}
where the last inequality follows from \eqref{paraofLya}, and   $\bar a= \min_{i \in \{1,\dots, t\}}{\bar{a}_i}$. The third inequality stems from the fact that
\begin{align*}
&\sum_{i=1}^t \bar a_{ i}\|A^{k+1-i}-A^{k-i}\|_{F}^2+\sum_{i=2}^{t+1} \sum_{j=i}^{t+1} \bar{a}_j\left\|A^{k+2-i}-A^{k+1-i}\right\|_{F}^2\\
=&\sum_{i=1}^t \bar a_{ i}\|A^{k+1-i}-A^{k-i}\|_{F}^2+\sum_{i=1}^{t} \sum_{j=i+1}^{t+1} \bar{a}_j\left\|A^{k+1-i}-A^{k-i}\right\|_{F}^2=\sum_{i=1}^t \sum_{j=i}^{t+1} \bar{a}_j\left\|A^{k+1-i}-A^{k-i}\right\|_{F}^2,
\end{align*}
and
\begin{align*}
&\sum_{i=1}^t \sum_{j=i}^{t+1} j \bar{a}_j\left\|A^{k+1-i}-A^{k-i}\right\|_{F}^2 -\sum_{i=2}^{t+1} \sum_{j=i}^{t+1} j \bar{a}_j\left\|A^{k+2-i}-A^{k+1-i}\right\|_{F}^2\\
=&\sum_{i=1}^t \sum_{j=i}^{t+1} j \bar{a}_j\left\|A^{k+1-i}-A^{k-i}\right\|_{F}^2 -\sum_{i=1}^{t} \sum_{j=i+1}^{t+1} j \bar{a}_j\left\|A^{k+1-i}-A^{k-i}\right\|_{F}^2=\sum_{i=1}^t i \bar{a}_i\left\|A^{k+1-i}-A^{k-i}\right\|_{F}^2.
\end{align*}
Denote $\rho:=\min\{\delta, (t+2)\bar{a}_{t+1}-\frac{\gamma}{2}, \bar{a}\}$. Then we have
\begin{align*}
\Psi_{k}-\mathbb{E}_{k}[\Psi_{k+1}]
\geq\, \rho \,(\mathbb{E}_{k}[\|A^{k+1}-A^k\|_{F}^2]+\sum_{i=1}^t \left\|A^{k+1-i}-A^{k-i}\right\|_{F}^2+ \|A^{k-t}-A^{k-t-1}\|_{F}^2).
\end{align*}
Therefore, the proof is completed.
\end{proof}
\begin{remark}
 Given that $\eta^k$ lies in $[\underline \eta, \bar\eta]$. From  \eqref{bk}, \eqref{aki}, and \eqref{paraofLya}, the following can be concluded:
    \begin{itemize}
        \item[(i)]  When $\alpha^{k+1-i}=\beta^{k+1-i}\equiv 0$ in \eqref{bk} and \eqref{aki}, condition \eqref{paraofLya} holds as long as $\bar\eta < \frac{2}{4L+\gamma(t+2)(t+3)}$.
        \item[(ii)] Set $t=1$. If $\alpha^{k}\equiv \alpha$, $\beta^{k}\equiv \beta$ (i.e. constant inertial parameters), then   \eqref{paraofLya} implies that $\alpha$, $\beta$ must lie within a certain region defined by the inequality,
        
        $$
        L \beta^2<\frac{1-2L \bar{\eta}-6 \gamma \bar{\eta}-6 \alpha}{15\, \bar{\eta}}.
        $$
        
        \item[(iii)]
        When $t \geq 2$, for each $i \in \{1,\dots, t\}$, let $\alpha^{k+1-i}\equiv \alpha^{i}$, $\beta^{k+1-i}\equiv \beta^{i}$, then \eqref{paraofLya} tells us that the $\beta^{i}$ must live in a region related to $\alpha^{i}$,
        $$
        \frac{3Lt}{2}  \sum_{i=1}^{t+1} (i+1)(\beta^i)^2  < \frac{1-\sum_{j=1}^t(i+2) \alpha^{i}-(t+2) \alpha^{t+1}-2L\bar{\eta}}{2 \bar{\eta}}-\frac{\gamma(t+2)(t+3)}{4}.
        $$

    \end{itemize}
\end{remark}
As indicated by Lemma \ref{lyapunov_descent}, $\Psi_{k+1}$ becomes nonincreasing in expectation if the stepsize $\eta^k$, the inertial parameters $\{\alpha^{k+1-i}\}_{k\ge0}$,  and $\{\beta^{k+1-i}\}_{k\ge0}$ are appropriately selected. Based on this result, we demonstrate our main conclusion in this subsection.

\begin{theorem}\label{subsequence_convergence}
Given the sequence $\{A_1^{k}, A_2^{k}, A_3^{k}\}_{k \in \mathbb{N}}$ generated by the Algorithm \ref{ipSGD}, the following conclusions can be drawn.
\begin{itemize}
\item[(i)] The sequence $\{\mathbb{E}[\Psi_{k}]\}_{k\in\mathbb{N}}$ is  monotonically nonincreasing.
\item[(ii)] $\sum\limits_{k=1}^{+\infty}\mathbb{E}\left[\left\|A^{k+1}-A^k\right\|_{F}^2\right]<+\infty$,
indicating that $\mathbb{E}\left[\left\|A^{k+1}-A^k\right\|_F^2\right] \rightarrow 0 \text { as } k \rightarrow \infty$.   

\item[(iii)]  For any positive integer $K$, we have $\min\limits_{1\le k\le K}\mathbb{E}\left[\left\|A^{k+1}-A^k\right\|_{F}^2\right]\le \frac{\Psi_{1}}{\epsilon K}$.
\end{itemize}
\end{theorem}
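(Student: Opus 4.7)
The plan is to drive everything directly from the one-step Lyapunov descent inequality established in Lemma \ref{lyapunov_descent}, namely
\[
\mathbb{E}_{k}[\Psi_{k+1}]\le\Psi_{k} - \rho\Bigl(\mathbb{E}_{k}[\|A^{k+1}-A^k\|_{F}^2]+\sum_{i=1}^{t+1}\|A^{k+1-i}-A^{k-i}\|_{F}^2\Bigr),
\]
together with the lower-boundedness of $\Phi$ guaranteed by Remark \ref{remark_03}(ii) and Assumption \ref{assume_ipsgd}(i). The overall strategy is standard for SGD-type schemes with a Lyapunov function: take total expectations, telescope, and use nonnegativity of the squared-norm terms.

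For part (i), I would apply the tower property of conditional expectation to the inequality above, obtaining
\[
\mathbb{E}[\Psi_{k+1}]\le\mathbb{E}[\Psi_{k}] - \rho\,\mathbb{E}[\|A^{k+1}-A^k\|_{F}^2] - \rho\sum_{i=1}^{t+1}\mathbb{E}[\|A^{k+1-i}-A^{k-i}\|_{F}^2].
\]
Since $\rho>0$ by the parameter conditions \eqref{paraofLya} and all squared-norm terms are nonnegative, this yields $\mathbb{E}[\Psi_{k+1}]\le\mathbb{E}[\Psi_{k}]$, proving monotonicity.

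For part (ii), I would telescope the same inequality from $k=1$ to $k=K$ and drop the extra nonnegative multi-step terms on the right to get
\[
\rho\sum_{k=1}^{K}\mathbb{E}[\|A^{k+1}-A^k\|_{F}^2]\le \mathbb{E}[\Psi_{1}]-\mathbb{E}[\Psi_{K+1}].
\]
The key observation is that $\Psi_{K+1}$ is bounded below: indeed, by the definition \eqref{lyapunov_function} we have $\Psi_{K+1}\ge \Phi^{K+1}\ge \Phi^{*}>-\infty$ using Remark \ref{remark_03}(ii). Hence the partial sums are bounded above by $(\mathbb{E}[\Psi_{1}]-\Phi^{*})/\rho$ uniformly in $K$, and letting $K\to\infty$ gives the claimed summability. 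Summability of a nonnegative series forces its general term to converge to zero, which yields $\mathbb{E}[\|A^{k+1}-A^k\|_{F}^2]\to 0$.

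Part (iii) follows from (ii) by a standard minimum-versus-average argument: since the minimum is bounded by the average,
\[
\min_{1\le k\le K}\mathbb{E}[\|A^{k+1}-A^k\|_{F}^2]\le \frac{1}{K}\sum_{k=1}^{K}\mathbb{E}[\|A^{k+1}-A^k\|_{F}^2]\le \frac{\mathbb{E}[\Psi_{1}]-\Phi^{*}}{\rho K},
\]
matching the stated $\mathcal{O}(1/K)$ rate (with the constant in the bound identified as $(\Psi_{1}-\Phi^{*})/\rho$). There is no serious obstacle here because Lemma \ref{lyapunov_descent} has already absorbed all the algorithmic difficulty; the only point demanding a little care is justifying the lower bound on $\Psi_{K+1}$, which requires noting that the squared-norm summands and the $\Gamma_{K+1}/(2\tau\gamma)$ term in \eqref{lyapunov_function} are both nonnegative so that $\Psi_{K+1}\ge\Phi^{K+1}\ge\Phi^{*}$.
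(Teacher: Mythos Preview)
Your proposal is correct and follows essentially the same route as the paper: monotonicity from Lemma~\ref{lyapunov_descent}, telescoping for summability, and the minimum-versus-average bound for the rate. The only difference is that the paper bounds $\mathbb{E}[\Psi_{K+1}]$ below by $0$ (asserting ``non-negativity of $\Psi_k$'') and so writes the constant as $\Psi_1/\epsilon$, whereas you more carefully bound it below by $\Phi^*$; your version is arguably the safer one, since Remark~\ref{remark_03}(ii) only guarantees $\Phi^*$ is finite, not nonnegative.
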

\begin{proof}
	\begin{itemize}
		\item[(i)] This statement follows directly from Lemma \ref{lyapunov_descent}, which ensures the nonincreasing nature of the sequence $\{\mathbb{E}[\Psi_{k}]\}_{k\in\mathbb{N}}$ since $\epsilon>0$.
		\item[(ii)] Accumulating   \eqref{decent_inequality_01} from $k=1$ to a positive integer $K$, we obtain
		\[
		\sum_{k=1}^{K}\mathbb{E}[\|A^{k+1}-A^k\|_{F}^2]\le \frac{1}{\epsilon}\mathbb{E}[\Psi_{1}-\Psi_{K+1}]\le \frac{1}{\epsilon}\Psi_{1},
		\]
		where the last inequality follows from the non-negativity of $\Psi_{k}$ for any $k>0$. Taking the limit as $K\rightarrow+\infty$, we have
        $$
        \sum_{k=1}^{+\infty}\mathbb{E}[\|A^{k+1}-A^k\|_{F}^2]<+\infty.
        $$
        Therefore, it follows that the sequence  $\{\mathbb{E}[ \|A^{k+1}-A^k\|_{F}^2]\}$ converges to zero. 
		\item[(iii)] For any positive integer $K$, the following inequality is satisfied
		\[
		K\min_{1\le k\le K}\mathbb{E}[\|A^{k+1}-A^k\|_{F}^2]\le\sum_{k=1}^{K}\mathbb{E}[\|A^{k+1}-A^k\|_{F}^2]\le\frac{1}{\epsilon}\Psi_{1},
		\]
		which yields the desired result. 
	\end{itemize}
	This completes the proof.
\end{proof}

\subsection{Sequential convergence analysis}

In this subsection, we explore the sequential convergence of the Midas-LL1 algorithm, demonstrating the convergence of the entire sequence to an  $\epsilon$-stationary point.

\begin{lemma} \label{subgradient_bound}
Suppose that Assumption \ref{assume_ipsgd} holds, the stepsize $\eta^k$ lies in $[\underline \eta, \bar\eta]$, \eqref{paraofLya} is satisfied, and  assume that the sequences 
$\{\alpha^{k+1-i}\}_{i=1}^t$ and $\{\beta^{k+1-i}\}_{i=1}^t$ are both non-decreasing with limits $\lim_{k \rightarrow \infty}\alpha_{k}=c_{1}$ and $\lim_{k \rightarrow \infty}\beta_{k}=c_{2}$.  Let $\{A_1^{k}, A_2^{k}, A_3^{k}\}_{k\in\mathbb{N}}$ be a bounded sequence generated by Algorithm \ref{ipSGD}. Define
\[
 P_{\xi^k}^{k+1}:=\nabla_{A_{\xi^k}}f(A^{k+1})-\tilde{\nabla}_{A_{\xi^k}}f(\underline{A}^k)+\frac{1}{\eta^k}(\tilde{A}_{\xi^k}^{k}-A_{\xi^k}^{k+1}), 
\]
where $\underline A^{k}:=(\dots, \underline{A}_{n}^{k},\dots)$.  
Then we have $P_{\xi^k}^{k+1}\in \partial_{\xi^k} \Phi\left(A^{k+1}\right)$. Furthermore, denote $P^{k+1}=\left(P_{1}^{k+1}, P_{2}^{k+1}, P_{3}^{k+1}\right)$, where $P_{i}^{k+1}=P_{\xi^k}^{k+1}$ if $i=\xi^k$ and $P_{i}^{k+1}=0$ otherwise. 
 Then  { $3\mathbb E_k[P^{k+1}] \in \partial \Phi\left(A^{k+1}\right)$}, and we can obtain the following results
\[
\mathbb{E}_{k}\|P^{k+1}\|_{F} \le \mu\left(\mathbb{E}_{k}[\|A^{k+1}-A^k\|_{F}]+\sum_{i=1}^{t+1} \|A^{k+1-i}-A^{k-i}\|_{F}\right) +\Upsilon_{k},
\]
where $\mu:=\max\left\{ L+\frac{1}{\underline\eta},  L c_{2}+\frac{1}{\underline\eta}c_{1}+V_2, V_2\right\}$. 
 
\end{lemma}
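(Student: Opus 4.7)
The plan is to first derive $P_{\xi^k}^{k+1}$ as an element of a block subdifferential from the first-order optimality of the update \eqref{Anrk_update}, and then bound $\|P^{k+1}\|_F$ in conditional expectation by combining the block Lipschitz estimate, the variance-reduced inequality \eqref{MSE_l2}, and the triangle inequality applied to the multi-step inertial extrapolations $\tilde A_{\xi^k}^{k}$ and $\underline A_{\xi^k}^{k}$.

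For the subgradient inclusion, I would first note that each $A_{\xi^k,r}^{k+1}$ is the unique minimizer of a strongly convex function, whose first-order optimality condition reads
\[
0\in \partial h_{\xi^k}(A_{\xi^k,r}^{k+1}) + \tilde{\nabla}_{A_{\xi^k,r}} f(\underline A^k) + \tfrac{1}{\eta^k}(A_{\xi^k,r}^{k+1}-\tilde A_{\xi^k,r}^{k}).
\]
Rearranging and adding $\nabla_{A_{\xi^k,r}}f(A^{k+1})$ to both sides shows that the $r$-th partition of $P_{\xi^k}^{k+1}$ lies in $\nabla_{A_{\xi^k,r}} f(A^{k+1}) + \partial h_{\xi^k}(A_{\xi^k,r}^{k+1})$; aggregating over $r$ and invoking the block-separable product formula from Proposition~1 then yields $P_{\xi^k}^{k+1}\in \partial_{\xi^k}\Phi(A^{k+1})$. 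The statement $3\mathbb{E}_k[P^{k+1}]\in \partial\Phi(A^{k+1})$ follows from the same block-separable structure: since $\xi^k$ is uniform on $\{1,2,3\}$ and $P^{k+1}$ is supported only on block $\xi^k$, the factor $3$ precisely compensates the probability $\tfrac{1}{3}$ of having selected any given block, so that every component of $3\mathbb{E}_k[P^{k+1}]$ is a valid block subgradient.

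For the norm bound I would split
\[
\|P_{\xi^k}^{k+1}\|_F \le \|\nabla_{A_{\xi^k}}f(A^{k+1})-\nabla_{A_{\xi^k}}f(\underline A^k)\|_F + \|\nabla_{A_{\xi^k}}f(\underline A^k)-\tilde{\nabla}_{A_{\xi^k}}f(\underline A^k)\|_F + \tfrac{1}{\eta^k}\|\tilde A_{\xi^k}^{k}-A_{\xi^k}^{k+1}\|_F.
\]
The first term is controlled by the Lipschitz estimate in Remark~\ref{remark_03}(i) after writing $A^{k+1}-\underline A^k = (A^{k+1}-A^k) - \sum_{i=1}^t \beta^{k+1-i}(A^{k+1-i}-A^{k-i})$ and applying the triangle inequality. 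The conditional expectation of the second term is bounded by $\Upsilon_k + V_2\sum_{i=1}^{t+1}\|A^{k+1-i}-A^{k-i}\|_F$ via inequality \eqref{MSE_l2} of Definition~\ref{vr_definition}. The third term is expanded via $\tilde A_{\xi^k}^{k}-A_{\xi^k}^{k+1} = -(A_{\xi^k}^{k+1}-A_{\xi^k}^k) + \sum_{i=1}^t \alpha^{k+1-i}(A_{\xi^k}^{k+1-i}-A_{\xi^k}^{k-i})$ followed by the triangle inequality.

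Finally, I would collect coefficients using $\alpha^{k+1-i}\le c_1$ and $\beta^{k+1-i}\le c_2$ (from the monotone convergence hypothesis) together with $\eta^k\ge \underline\eta$. This produces coefficient $L+1/\underline\eta$ in front of $\|A^{k+1}-A^k\|_F$, coefficient $Lc_2+c_1/\underline\eta+V_2$ in front of each $\|A^{k+1-i}-A^{k-i}\|_F$ for $1\le i\le t$, and coefficient $V_2$ in front of $\|A^{k-t}-A^{k-t-1}\|_F$ (which only inherits the variance-reduction contribution, since the Lipschitz and proximal pieces involve only indices $i\le t$). Setting $\mu$ to the maximum of these three values delivers the claimed inequality with the residual $\Upsilon_k$. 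The main conceptual obstacle is justifying the inclusion $3\mathbb{E}_k[P^{k+1}]\in \partial\Phi(A^{k+1})$ carefully under the randomness of $\xi^k$ by means of the product-structure of $\partial\Phi$; the remaining estimates are routine bookkeeping across the $t$-step inertial history.
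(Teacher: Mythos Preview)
Your proposal is correct and follows essentially the same route as the paper: derive the optimality condition of \eqref{Anrk_update} to obtain $P_{\xi^k}^{k+1}\in\partial_{\xi^k}\Phi(A^{k+1})$, split $\|P_{\xi^k}^{k+1}\|_F$ into the Lipschitz, variance-reduction, and proximal pieces, expand the inertial extrapolations via the triangle inequality, and bound the coefficients using $\alpha^{k+1-i}\le c_1$, $\beta^{k+1-i}\le c_2$, $\eta^k\ge\underline\eta$. The only cosmetic difference is that the paper first bounds $\alpha^{k+1-i}\le\alpha^k$ and $\beta^{k+1-i}\le\beta^k$ (from the non-decreasing hypothesis) and then passes to the limits $c_1,c_2$, whereas you go directly to $c_1,c_2$; the paper also leaves the claim $3\mathbb{E}_k[P^{k+1}]\in\partial\Phi(A^{k+1})$ unproved, so your brief justification actually adds to what is there.
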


\begin{proof}

Let  $n=\xi^{k}$ at the $k$-th iteration.  It follows from \eqref{Anrk_update} that
            \begin{eqnarray}
		  \begin{aligned}
			A_{n}^{k+1} &= \underset{A_{n,r}}{\arg\min}\,\,\sum_{r=1}^R h_{n}\left(A_{n,r}\right)+\sum_{r=1}^R\langle\tilde{\nabla}_{A_{n,r}}f(\underline{A}_n^k), A_{n,r}-(\tilde{A}_{n,r})^{k}\rangle+\frac{1}{2\eta^{k}} \sum_{r=1}^R\|A_{n,r}-(\tilde{A}_{n,r})^{k}\|_{F}^{2} \\
			 &=\underset{A_{n,r}}{\arg\min}\,\, h_{n}\left(A_{n}\right)+\langle \tilde{\nabla}_{A_{n}}f(\underline{A}_n^k), A_{n}-\tilde{A}_{n}^{k}\rangle+\frac{1}{2\eta^{k}} \|A_{n}-\tilde{A}_{n}^{k}\|_{F}^{2}.
		\end{aligned}  
		\end{eqnarray}
Therefore, we derive that 
\[
0 \in \partial h_{n}(A_{n}^{k+1}) + \tilde{\nabla}_{A_{n}} f(\underline A_n^k) + \frac{1}{\eta^k}( A_{n}^{k+1} - \tilde{A}_{n}^k ).
\]
Combining this with  
$$\partial_n \Phi\left(A^{k+1}\right)=\nabla_{A_n}f\left(A^{k+1}\right)+\partial h_n(A_{n}^{k+1}),
$$ 
we can assert that $P_{n}^{k+1}\in \partial_n \Phi(A^{k+1})$.

The next step is to derive a bound for the expected value of the norm of $P^{k+1}$. Let $n=\xi^{k}$ denote the index selected at the $k$-th iteration, which suggests that
\begin{align*}
 &{\mathbb{E}_{k}\|P^{k+1}\|_{F}}\\
=&{\mathbb{E}_{k}\|P_{\xi^{k}}^{k+1}\|_{F}}\\
=&\mathbb{E}_{k}\|\nabla_{A_{\xi^{k}}}f(A^{k+1}-\tilde{\nabla}_{A_{\xi^{k}}}f(\underline{A}^k))+\frac{1}{\eta^k}(\tilde{A}_{\xi^{k}}^{k}-A_{\xi^{k}}^{k+1})\|_{F}\\
\le&\mathbb{E}_{k}\|\nabla_{A_{\xi^{k}}}f(A^{k+1})-\tilde{\nabla}_{A_{\xi^{k}}}f(\underline{A}^k)\|_{F}+\frac{1}{\eta^k}\mathbb{E}_{k}\|\tilde{A}_{\xi^{k}}^{k}-A_{\xi^{k}}^{k+1}\|_{F}\\
\le&\mathbb{E}_{k}\|\nabla_{A_{\xi^{k}}}f(A^{k+1})-\nabla_{A_{\xi^{k}}}f(\underline{A}^k)\|_{F}+\mathbb{E}_{k}\| \nabla_{A_{\xi^{k}}}f(\underline{A}^k)-\tilde{\nabla}_{A_{\xi^{k}}}f(\underline{A}^k)\|_{F}+\frac{1}{\eta^k}\mathbb{E}_{k}\|\tilde{A}_{\xi^{k}}^{k}-A_{\xi^{k}}^{k+1}\|_{F}\\
\le& L \mathbb{E}_{k}\|A^{k+1}-\underline{A}^k\|_{F}+ \Upsilon_{k}+ V_2 \,(\sum_{i=1}^{t+1}\left\|A^{k+1-i}-A^{k-i}\right\|_{F})+\frac{1}{\eta^k}\mathbb{E}_{k}\|\tilde{A}^{k}-A^{k+1}\|_{F}\\
\le&  L \mathbb{E}_{k}\|A^{k+1}-{A}^k\|_{F}+  L \|\sum_{i=1}^t \beta^{k+1-i}(A^{k+1-i}-A^{k-i})\|_{F}+ \Upsilon_{k}+ V_2\sum_{i=1}^{t}\left\|A^{k+1-i}-A^{k-i}\right\|_{F}\\
&+ V_2\left\|A^{k-t}-A^{k-t-1}\right\|_{F}+\frac{1}{\eta^k}\mathbb{E}_{k}\|A^{k+1}-{A}^{k}\|_{F}+\frac{1}{\eta^k}\|\sum_{i=1}^t \alpha^{k+1-i}(A^{k+1-i}-A^{k-i})\|_{F}\\
\le& ( L+\frac{1}{\eta^k})\mathbb{E}_{k}\|A^{k+1}-{A}^{k}\|_{F} + \sum_{i=1}^t \left( L \beta^{k+1-i}+\frac{1}{\eta^k}\alpha^{k+1-i}+V_2\right)\|A^{k+1-i}-A^{k-i}\|_{F}\\
&+ V_2\left\|A^{k-t}-A^{k-t-1}\right\|_{F}+ \Upsilon_{k}\\
\le&( L+\frac{1}{\underline\eta})\mathbb{E}_{k}\|A^{k+1}-{A}^{k}\|_{F} + \left( L \beta^{k}+\frac{1}{\underline\eta}\alpha^{k}+V_2\right)\sum_{i=1}^t \|A^{k+1-i}-A^{k-i}\|_{F}\\
&+ V_2\left\|A^{k-t}-A^{k-t-1}\right\|_{F}+ \Upsilon_{k}\\
\le&( L+\frac{1}{\underline\eta})\mathbb{E}_{k}\|A^{k+1}-{A}^{k}\|_{F} + \left( L c_{2}+\frac{1}{\underline\eta}c_{1}+V_2\right)\sum_{i=1}^t \|A^{k+1-i}-A^{k-i}\|_{F}\\
&+ V_2\left\|A^{k-t}-A^{k-t-1}\right\|_{F}+ \Upsilon_{k},
\end{align*}
where the last inequality is derived from the sequences 
$\{\alpha^{k+1-i}\}_{i=1}^t$ and $\{\beta^{k+1-i}\}_{i=1}^t$ are both non-decreasing, i.e., 
$\alpha^{k}\ge \alpha^{k-1}$ and $\beta^{k}\ge \beta^{k-1}$, with limits $\lim_{k \rightarrow \infty}\alpha_{k}=c_{1}$ and $\lim_{k \rightarrow \infty}\beta_{k}=c_{2}$.

Let $\mu:=\max\left\{ L+\frac{1}{\underline\eta},  L c_{2}+\frac{1}{\underline\eta}c_{1}+V_2, V_2\right\}$, we have
\[
\mathbb{E}_{k}\|P^{k+1}\|_{F} \le \mu(\mathbb{E}_{k}\|A^{k+1}-A^k\|_{F}+\sum_{i=1}^t \|A^{k+1-i}-A^{k-i}\|_{F}+\|A^{k-t}-A^{k-t-1}\|_{F}) +\Upsilon_{k},
\]
This proves the statement.
\end{proof}

\begin{lemma}\label{lemma_dist2}
Under the same conditions outlined in Lemma \ref{subgradient_bound}, there exists a positive constant 
$\bar\mu:=3\max\left\{6L^2+\frac{6}{\underline\eta}, 6L^2 t\, c_2^2+3V_1+\frac{6t}{\underline\eta}c_1^2, 3V_1\right\}$ such that
\begin{eqnarray*}
\begin{aligned}
\mathbb{E}[\mathrm{dist}(0,\partial \Phi(A^{k+1}))^{2}]&\le \bar{\mu}\left(\mathbb{E}[\|A^{k+1}-A^{k}\|_{F}^{2}]+\sum_{i=1}^{t+1}\|A^{k+1-i}-A^{k-i}\|_{F}^2\right)+3\mathbb{E}\Gamma_{k}.
\end{aligned}
\end{eqnarray*}
\end{lemma}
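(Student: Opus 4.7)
The plan is to square and propagate the argument already carried out in Lemma~\ref{subgradient_bound}. Since Lemma~\ref{subgradient_bound} establishes $3\mathbb{E}_k[P^{k+1}]\in\partial\Phi(A^{k+1})$, the definition of distance combined with Jensen's inequality applied to the convex map $\|\cdot\|_F^{2}$ gives
\[
\mathrm{dist}(0,\partial\Phi(A^{k+1}))^{2}\le \|3\mathbb{E}_k[P^{k+1}]\|_F^{2}\le 9\,\mathbb{E}_k[\|P^{k+1}\|_F^{2}],
\]
so the task reduces to controlling $\mathbb{E}_k[\|P^{k+1}\|_F^{2}]$ in terms of consecutive iterate differences and $\Gamma_k$.

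Next, I would decompose $P_{\xi^k}^{k+1}$ into three pieces by inserting $\pm \nabla_{A_{\xi^k}}f(\underline A^{k})$: the Lipschitz increment $\nabla_{A_{\xi^k}}f(A^{k+1})-\nabla_{A_{\xi^k}}f(\underline A^k)$, the stochastic noise $\nabla_{A_{\xi^k}}f(\underline A^k)-\tilde\nabla_{A_{\xi^k}}f(\underline A^k)$, and the proximal residual $\tfrac{1}{\eta^k}(\tilde A_{\xi^k}^k-A_{\xi^k}^{k+1})$. Applying $\|a+b+c\|_F^{2}\le 3(\|a\|_F^{2}+\|b\|_F^{2}+\|c\|_F^{2})$ lets me bound each piece independently: for the first, Remark~\ref{remark_03}(i) gives an $L^{2}\|A^{k+1}-\underline A^k\|_F^{2}$ bound, which I further expand via $\|A^{k+1}-\underline A^k\|_F^{2}\le 2\|A^{k+1}-A^k\|_F^{2}+2t c_{2}^{2}\sum_{i=1}^t\|A^{k+1-i}-A^{k-i}\|_F^{2}$ using $\beta^{k+1-i}\le c_{2}$ and Cauchy--Schwarz; for the second, the MSE bound \eqref{MSE_l22} contributes $\Gamma_k$ together with $V_{1}\sum_{i=1}^{t+1}\|A^{k+1-i}-A^{k-i}\|_F^{2}$; and for the third, a parallel expansion of $\tilde A_{\xi^k}^k - A_{\xi^k}^{k+1}$ combined with $\eta^k\ge\underline\eta$ and $\alpha^{k+1-i}\le c_{1}$ produces analogous coefficients involving $1/\underline\eta^{2}$ and $c_{1}^{2}$.

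After collecting terms, the coefficient multiplying $\mathbb{E}_k\|A^{k+1}-A^k\|_F^{2}$ is proportional to $L^{2}+1/\underline\eta^{2}$, the coefficient multiplying $\sum_{i=1}^{t}\|A^{k+1-i}-A^{k-i}\|_F^{2}$ is proportional to $L^{2}t c_{2}^{2}+V_{1}+tc_{1}^{2}/\underline\eta^{2}$, and the coefficient multiplying $\|A^{k-t}-A^{k-t-1}\|_F^{2}$ is proportional to $V_{1}$. Taking the maximum of these three, bundling the two inertial sums into $\sum_{i=1}^{t+1}\|A^{k+1-i}-A^{k-i}\|_F^{2}$, folding in the factor from Jensen's inequality, and applying the tower property of expectation produces the claimed inequality with the constant $\bar\mu$. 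The main obstacle is purely bookkeeping: tracking the compounded factors of $3$ from each use of the squared triangle inequality and from the $N=3$ multiplier inherited from Lemma~\ref{subgradient_bound}, and verifying that all block-indexed norms $\|A_{\xi^k}^{k+1-i}-A_{\xi^k}^{k-i}\|_F$ can be safely replaced by the joint norms $\|A^{k+1-i}-A^{k-i}\|_F$ (which is valid because at step $k$ only the $\xi^k$-th block moves, so the block-wise and joint squared increments coincide). No new stochastic tool is needed beyond \eqref{MSE_l22}, the Lipschitz property of $\nabla f$, and Jensen's inequality.
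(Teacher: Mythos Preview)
Your proposal follows the paper's proof essentially line for line: both decompose $P_{\xi^k}^{k+1}$ into the Lipschitz, variance, and proximal pieces via the three-term squared triangle inequality, invoke \eqref{MSE_l22} for the noise, expand the inertial terms using $\alpha^{k+1-i}\le c_1$, $\beta^{k+1-i}\le c_2$, and then pass from $\|P^{k+1}\|_F^2$ to $\mathrm{dist}(0,\partial\Phi(A^{k+1}))^2$ via the subgradient membership from Lemma~\ref{subgradient_bound}. Your bookkeeping differs only cosmetically---your Jensen step gives a multiplier $9$ where the paper writes $3$, and you correctly obtain $1/\underline\eta^{2}$ from squaring the proximal term whereas the paper's displayed $\tfrac{3}{\eta^k}\|\tilde A^k-A^{k+1}\|_F^2$ (and hence the $6/\underline\eta$ in the stated $\bar\mu$) appears to drop a square---but these are constant-level slips, not differences in method.
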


\begin{proof}
It follows from Lemma \ref{subgradient_bound}  that
\begin{align*}
&\mathbb{E}_{k}\|P^{k+1}\|_{F}^{2}\\
=&\mathbb{E}_{k}\|P_{\xi^{k}}^{k+1}\|_{F}^{2}\\
=&\mathbb{E}_{k}\|\nabla_{A_{\xi^{k}}}f(A^{k+1})-{\nabla}_{A_{\xi^{k}}}f(\underline{A}^k)+\nabla_{A_{\xi^{k}}}f(\underline{A}^k)-\tilde{\nabla}_{A_{\xi^{k}}}f(\underline{A}^k)+\frac{1}{\eta^k}(\tilde{A}_{\xi^{k}}^{k}-A_{\xi^{k}}^{k+1})\|_{F}^{2}\\
\le&3\mathbb{E}_{k}\|\nabla_{A_{\xi^{k}}}f(A^{k+1})-{\nabla}_{A_{\xi^{k}}}f(\underline{A}^k)\|_{F}^{2}+3\mathbb{E}_{k}\|\nabla_{A_{\xi^{k}}}f(\underline{A}^k)-\tilde{\nabla}_{A_{\xi^{k}}}f(\underline{A}^k)\|_{F}^{2}+\frac{3}{\eta^k}\mathbb{E}_{k}\|\tilde{A}^{k}-A^{k+1}\|_{F}^{2}\\
\le&3L^2\mathbb{E}_{k}\|A^{k+1}-\underline{A}^k\|_{F}^{2}+3\Gamma_{k}+3V_1\sum_{i=1}^{t+1} \left\|A^{k+1-i}-A^{k-i}\right\|_{F}^2+\frac{3}{\eta^k}\mathbb{E}_{k}\|A^{k+1}-\tilde{A}^{k}\|_{F}^{2}\\
\le& 6L^2\mathbb{E}_{k}\|A^{k+1}-{A}^k\|_{F}^{2}+6L^2 t\sum_{i=1}^t \left(\beta^{k+1-i}\right)^2\|A^{k+1-i}-A^{k-i}\|_{F}^2+3\Gamma_{k}\\
&+3V_1\sum_{i=1}^{t+1} \left\|A^{k+1-i}-A^{k-i}\right\|_{F}^2+\frac{6}{\eta^k}\mathbb{E}_{k}\|A^{k+1}-{A}^{k}\|_{F}^{2}+\frac{6t}{\eta^k}\sum_{i=1}^t \left(\alpha^{k+1-i}\right)^2\left\|A^{k+1-i}-A^{k-i}\right\|_{F}^2\\
\le& (6L^2+\frac{6}{\underline\eta})\,\mathbb{E}_{k}\|A^{k+1}-{A}^k\|_{F}^{2}+\left(6L^2 t\left(\beta^{k}\right)^2+3V_1+\frac{6t}{\underline\eta}\left(\alpha^{k}\right)^2\right)\sum_{i=1}^t \|A^{k+1-i}-A^{k-i}\|_{F}^2\\
&+3V_1 \left\|A^{k-t}-A^{k-t-1}\right\|_{F}^2+3\Gamma_{k}\\
\le& (6L^2+\frac{6}{\underline\eta})\,\mathbb{E}_{k}\|A^{k+1}-{A}^k\|_{F}^{2}+\left(6L^2 t\, c_2^2+3V_1+\frac{6t}{\underline\eta}c_1^2\right)\sum_{i=1}^t \|A^{k+1-i}-A^{k-i}\|_{F}^2\\
&+3V_1 \left\|A^{k-t}-A^{k-t-1}\right\|_{F}^2+3\Gamma_{k}\\
\le&\bar\mu(\mathbb{E}_{k}\|A^{k+1}-{A}^k\|_{F}^{2}+\sum_{i=1}^t\|A^{k+1-i}-A^{k-i}\|_{F}^2+\left\|A^{k-t}-A^{k-t-1}\right\|_{F}^2)+3\Gamma_{k},
\end{align*}
where $\bar\mu^0:=\max\left\{6L^2+\frac{6}{\underline\eta}, 6L^2 t\, c_2^2+3V_1+\frac{6t}{\underline\eta}c_1^2, 3V_1\right\}$. 

Utilizing $\mathbb E_k \left[\mathrm{dist}\left(0, \partial \Phi\left(A^{k+1}\right)\right)^2 \right]\leq 3\mathbb E_k\left [\left\|P^{k+1}\right\|_{F}^2\right]$ and taking the full expectation on both sides, we derive the following result
\begin{eqnarray*}
\begin{aligned}
&\mathbb{E}[\mathrm{dist}(0,\partial \Phi(A^{k+1}))^{2}]\\
\le &\bar{\mu}\left(\mathbb{E}\|A^{k+1}-A^{k}\|_{F}^{2}+\sum_{i=1}^t\|A^{k+1-i}-A^{k-i}\|_{F}^2+\left\|A^{k-t}-A^{k-t-1}\right\|_{F}^2\right)+3\mathbb{E}\Gamma_{k},
\end{aligned}
\end{eqnarray*}
where $\bar\mu=3\bar\mu^0$. 
Hence, the proof is finalized.
\end{proof}

\begin{theorem}\label{subgradient_rate}
Suppose Assumptions \ref{assume_ipsgd} holds, the stepsize $\eta^k$ lies in $[\underline \eta, \bar\eta]$ and \eqref{paraofLya} is satisfied.  Let $\{A_1^{k}, A_2^{k}, A_3^{k}\}_{k\in\mathbb{N}}$ produced by Algorithm \ref{ipSGD} remain bounded for all $k$. Then there exists $0<\sigma<\epsilon$ such that
\[
\mathbb{E}[\mathrm{dist}(0,\partial \Phi(A^{\hat{k}}))^{2}]\le \frac{\bar{\mu}}{(\epsilon-\sigma)K}\left(\mathbb{E}\Psi_{1}+\frac{3(\epsilon-\sigma)}{\tau\bar{\mu}}\mathbb{E}\Gamma_{1}\right)=\mathcal{O}(1/K),
\]
where $\hat{k}$ is drawn from $\{2, \dots, K+1\}$. This implies that at most  $\mathcal{O}(\epsilon^{-2})$ iterations are required in expectation to reach an $\epsilon$-stationary point (refer to Definition \ref{stationary-point}) of $\Phi$. 
\end{theorem}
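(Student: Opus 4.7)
The plan is to combine Lemma \ref{lemma_dist2} (which controls $\mathrm{dist}(0,\partial\Phi(A^{k+1}))^2$ by successive differences and by the variance proxy $\Gamma_k$) with the descent inequality for the Lyapunov function $\Psi_k$ in Lemma \ref{lyapunov_descent}, and to use the geometric decay of $\{\Gamma_k\}$ from Definition \ref{vr_definition}(ii). The idea is that summing both ingredients over $k=1,\dots,K$ converts the per-iteration bounds into a finite quantity depending only on $\mathbb{E}\Psi_1$ and $\mathbb{E}\Gamma_1$; then picking $\hat k$ uniformly at random from $\{2,\dots,K+1\}$ turns the averaged bound into an expected bound of order $\mathcal{O}(1/K)$, from which the $\mathcal{O}(\varepsilon^{-2})$ iteration complexity for an $\varepsilon$-stationary point follows by requiring this bound to be at most $\varepsilon^2$.

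More concretely, I would first take the full expectation of the bound in Lemma \ref{lemma_dist2} and sum from $k=1$ to $K$, obtaining
\[
\sum_{k=1}^{K}\mathbb{E}\!\left[\mathrm{dist}(0,\partial\Phi(A^{k+1}))^{2}\right]
\le \bar{\mu}\sum_{k=1}^{K}\mathbb{E}\!\left[\|A^{k+1}-A^k\|_F^2+\sum_{i=1}^{t+1}\|A^{k+1-i}-A^{k-i}\|_F^2\right]
+3\sum_{k=1}^{K}\mathbb{E}\Gamma_k .
\]
The first term on the right is exactly what the Lyapunov descent inequality \eqref{decent_inequality_01} telescopes: summing $\mathbb{E}[\Psi_{k+1}]\le\mathbb{E}[\Psi_k]-\rho\,\bigl(\mathbb{E}\|A^{k+1}-A^k\|_F^2+\sum_{i=1}^{t+1}\|A^{k+1-i}-A^{k-i}\|_F^2\bigr)$ gives
\[
\sum_{k=1}^{K}\mathbb{E}\!\left[\|A^{k+1}-A^k\|_F^2+\sum_{i=1}^{t+1}\|A^{k+1-i}-A^{k-i}\|_F^2\right]\le \tfrac{1}{\rho}\mathbb{E}\Psi_1,
\]
since $\Psi_{K+1}\ge 0$ (using Assumption \ref{assume_ipsgd}(i) and Remark \ref{remark_03}(ii) to ensure $\Phi$ is bounded below).

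For the variance proxy, the geometric decay $\mathbb{E}_k[\Gamma_{k+1}]\le(1-\tau)\Gamma_k+V_\Gamma\sum_{i=1}^{t+1}\|A^{k+1-i}-A^{k-i}\|_F^2$ is summed telescopically: taking expectations, summing, and rearranging yields
\[
\tau\sum_{k=1}^{K}\mathbb{E}\Gamma_k\le \mathbb{E}\Gamma_1+V_\Gamma\sum_{k=1}^{K}\sum_{i=1}^{t+1}\mathbb{E}\|A^{k+1-i}-A^{k-i}\|_F^2
\le \mathbb{E}\Gamma_1+\tfrac{V_\Gamma}{\rho}\mathbb{E}\Psi_1,
\]
where the second inequality reuses the Lyapunov bound already derived. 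Inserting both estimates and then choosing $\hat k$ uniformly from $\{2,\dots,K+1\}$ so that $\mathbb{E}\mathrm{dist}(0,\partial\Phi(A^{\hat k}))^2=\tfrac{1}{K}\sum_{k=1}^K \mathbb{E}\mathrm{dist}(0,\partial\Phi(A^{k+1}))^2$ yields a bound of the form $\tfrac{C}{K}(\mathbb{E}\Psi_1+c\,\mathbb{E}\Gamma_1/\tau)$, which matches the stated expression after identifying $\varepsilon-\sigma$ with the effective descent constant $\rho/\bar\mu$ (and thereby setting $\sigma=\varepsilon-\rho/\bar\mu>0$ for $\rho$ small enough). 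Requiring this $\mathcal{O}(1/K)$ quantity to be at most $\varepsilon^2$ gives $K=\mathcal{O}(\varepsilon^{-2})$ in expectation.

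The main obstacle I expect is bookkeeping the multi-step lagged differences: the index $k-t-1$ that appears in Lemma \ref{lemma_Phi_kk1} makes the telescoping nontrivial, because summing over $k$ must handle boundary terms at both ends of the window of length $t+2$. The cleanest way to deal with this is to absorb these boundary contributions into the initial Lyapunov value $\mathbb{E}\Psi_1$ (using that the algorithm is initialized with $A^{-1}=A^0$, so $\|A^{1-i}-A^{-i}\|_F=0$ for $i\ge 2$), rather than tracking them through the sum. Once this is done, the remainder of the proof is a matter of collecting constants into a single positive factor $\varepsilon-\sigma$ and combining the two telescoped estimates, and the final iteration complexity statement follows from Definition \ref{stationary-point}.
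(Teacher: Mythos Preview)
Your argument is correct and yields the claimed $\mathcal{O}(1/K)$ rate, but it is organized differently from the paper's proof. The paper works \emph{per iteration}: it starts from the Lyapunov descent $\mathbb{E}[\Psi_k-\Psi_{k+1}]\ge \epsilon\bigl(\mathbb{E}\|A^{k+1}-A^k\|_F^2+\sum_{i=1}^{t+1}\|A^{k+1-i}-A^{k-i}\|_F^2\bigr)$, splits the constant as $\epsilon=\sigma+(\epsilon-\sigma)$, converts the $(\epsilon-\sigma)$ portion of the differences into $\tfrac{\epsilon-\sigma}{\bar\mu}\,\mathrm{dist}(0,\partial\Phi(A^{k+1}))^2-\tfrac{3(\epsilon-\sigma)}{\bar\mu}\Gamma_k$ via Lemma~\ref{lemma_dist2}, and then applies the geometric decay \eqref{Gamma_k1_k} to turn $-\Gamma_k$ into the telescoping $\tfrac{1}{\tau}(\Gamma_{k+1}-\Gamma_k)$ minus a $V_\Gamma$-multiple of the lagged differences. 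Choosing $\sigma=\tfrac{3(\epsilon-\sigma)V_\Gamma}{\tau\bar\mu}$, i.e.\ $\sigma=\tfrac{3\epsilon V_\Gamma}{\tau\bar\mu+3V_\Gamma}$, cancels the residual difference terms \emph{before} summing, so a single telescoping over $k$ gives exactly the displayed bound with that explicit $\sigma$.

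Your modular route---sum Lemma~\ref{lemma_dist2}, bound $\sum_k(\cdot)$ by $\tfrac{1}{\rho}\mathbb{E}\Psi_1$ via \eqref{decent_inequality_01}, and bound $\sum_k\mathbb{E}\Gamma_k$ separately using \eqref{Gamma_k1_k} and the same Lyapunov estimate---produces $\tfrac{1}{K}\bigl[(\tfrac{\bar\mu}{\rho}+\tfrac{3V_\Gamma}{\tau\rho})\mathbb{E}\Psi_1+\tfrac{3}{\tau}\mathbb{E}\Gamma_1\bigr]$, which after simplification coincides with the paper's constants (since $\tfrac{\bar\mu}{\epsilon-\sigma}=\tfrac{\bar\mu}{\epsilon}+\tfrac{3V_\Gamma}{\epsilon\tau}$ for the paper's choice of $\sigma$, with $\epsilon=\rho$). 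So both approaches land on the same bound; the paper's buys a direct derivation of the specific $\sigma$ in the statement, while yours is more transparent but your final sentence ``identifying $\varepsilon-\sigma$ with $\rho/\bar\mu$'' is not the right match---the correct identification is $\epsilon-\sigma=\tfrac{\rho\tau\bar\mu}{\tau\bar\mu+3V_\Gamma}$. Your worry about the $k-t-1$ boundary terms is already handled by the Lyapunov packaging in \eqref{decent_inequality_01}, so no extra bookkeeping is needed there.
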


\begin{proof}
Combining \eqref{decent_inequality_01} and Lemma \ref{lemma_dist2}, it shows that

\begin{align*}
&\mathbb{E}[\Psi_{k}-\Psi_{k+1}]\\
\ge&\epsilon(\,\mathbb{E}\|A^{k+1}-A^{k}\|_{F}^{2}+\sum_{i=1}^{t+1}\mathbb{E}\|A^{k+1-i}-A^{k-i}\|_{F}^2)\\
\ge&\sigma(\,\mathbb{E}\|A^{k+1}-A^{k}\|_{F}^{2}+\sum_{i=1}^{t+1}\mathbb{E}\|A^{k+1-i}-A^{k-i}\|_{F}^2)+\frac{\epsilon-\sigma}{\bar{\mu}}\mathbb{E}[\mathrm{dist}(0,\partial \Phi(A^{k+1}))^{2}]-\frac{3(\epsilon-\sigma)}{\bar{\mu}}\mathbb{E}\Gamma_{k}\\
\ge&\sigma(\,\mathbb{E}\|A^{k+1}-A^{k}\|_{F}^{2}+\sum_{i=1}^{t+1}\mathbb{E}\|A^{k+1-i}-A^{k-i}\|_{F}^2)+\frac{\epsilon-\sigma}{\bar{\mu}}\mathbb{E}[\mathrm{dist}(0,\partial \Phi(A^{k+1}))^{2}]\\
&+\frac{3(\epsilon-\sigma)}{\tau\bar{\mu}}\mathbb{E}[\Gamma_{k+1}-\Gamma_{k}]-\frac{3(\epsilon-\sigma)V_{\Gamma}}{\tau\bar{\mu}}\sum_{i=1}^{t+1}\mathbb{E}\|A^{k+1-i}-A^{k-i}\|_{F}^2\\
\ge&\sigma(\,\mathbb{E}\|A^{k+1}-A^{k}\|_{F}^{2}+\sum_{i=1}^{t+1}\mathbb{E}\|A^{k+1-i}-A^{k-i}\|_{F}^2)+\frac{\epsilon-\sigma}{\bar{\mu}}\mathbb{E}[\mathrm{dist}(0,\partial \Phi(A^{k+1}))^{2}]\\
&+\frac{3(\epsilon-\sigma)}{\tau\bar{\mu}}\mathbb{E}[\Gamma_{k+1}-\Gamma_{k}]-\frac{3(\epsilon-\sigma)V_{\Gamma}}{\tau\bar{\mu}}\sum_{i=1}^{t+1}\mathbb{E}\|A^{k+1-i}-A^{k-i}\|_{F}^2-\frac{3(\epsilon-\sigma)V_{\Gamma}}{\tau\bar{\mu}}\mathbb{E}\|A^{k+1}-A^{k}\|_{F}^{2},
\end{align*}
where the third inequality is derived from \eqref{Gamma_k1_k} in Definition \ref{vr_definition}. Let $\sigma=\frac{3(\epsilon-\sigma)V_{\Gamma}}{\tau\bar{\mu}}$, i.e., $\sigma=\frac{3\epsilon  V_{\Gamma}}{\tau\bar{\mu}+3V_{\Gamma}}$, it shows that
	\[
	\mathbb{E}[\Psi_{k}-\Psi_{k+1}]\ge \frac{\epsilon-\sigma}{\bar{\mu}}\mathbb{E}[\mathrm{dist}(0,\partial \Phi(A^{k+1}))^{2}]+\frac{3(\epsilon-\sigma)}{\tau\bar{\mu}}\mathbb{E}[\Gamma_{k+1}-\Gamma_{k}].
	\]
	Summing up from $k=1$ to $K$, we can conclude that
	\[
	\mathbb{E}[\Psi_{1}-\Psi_{K+1}]\ge \frac{\epsilon-\sigma}{\bar{\mu}}\sum_{k=1}^{K}\mathbb{E}[\mathrm{dist}(0,\partial \Phi(A^{k+1}))^{2}]+\frac{3(\epsilon-\sigma)}{\tau\bar{\mu}}\mathbb{E}[\Gamma_{K+1}-\Gamma_{1}],
	\]
      implying the existence of $\hat{k} \in{2,\dots,K+1}$ such that
	\begin{align*}
		\mathbb{E}[\mathrm{dist}(0,\partial \Phi(A^{\hat{k}}))^{2}]\le&\frac{1}{K}\sum_{k=1}^{K}\mathbb{E}[\mathrm{dist}(0,\partial \Phi(A^{k+1}))^{2}]\\
		\le&\frac{\bar{\mu}}{(\epsilon-\sigma)K}(\mathbb{E}[\Psi_{1}-\Psi_{K+1}]+\frac{3(\epsilon-\sigma)}{\tau\bar{\mu}}\mathbb{E}[\Gamma_{1}-\Gamma_{K+1}])\\
		\le&\frac{\bar{\mu}}{(\epsilon-\sigma)K}(\mathbb{E}\Psi_{1}+\frac{3(\epsilon-\sigma)}{\tau\bar{\mu}}\mathbb{E}\Gamma_{1}).
	\end{align*}
The proof is completed.
\end{proof}
Define the set of limit points of $\{A^{k}\}_{k\in\mathbb{N}}$ as
\begin{eqnarray*}
	\begin{aligned}
		\Omega(A^{0}):=\{A^{*}:\exists &\text{ an increasing sequence of integers }\{k_{l}\}_{l\in\mathbb{N}} \text{ such that }A^{k_{l}}\rightarrow A^{*} \text{ as } l\rightarrow+\infty \}.
	\end{aligned}
\end{eqnarray*}

Next, by combining Theorems~\ref{subsequence_convergence} and \ref{subgradient_rate} with the K{\L} property \cite{BolteST14, DriggsTLDS2020}, we are ready to outline the global convergence result for the Midas-LL1 algorithm in the following theorem. The proof details are omitted here for brevity, but readers may refer to \cite{DriggsTLDS2020, WangH23}.

\begin{theorem}\label{global_convergence}
Suppose that Assumption \ref{assume_ipsgd} is satisfied, the step $\eta_{k}$ is within the range $[\underline \eta, \bar\eta]$ and \eqref{paraofLya} is satisfied. Let the sequence  $\{A^{k}\}_{k\in\mathbb{N}}$ generated by Algorithm \ref{ipSGD} with variance-reduced gradient estimator be bounded, where $A^{k}=\{A_1^{k}, A_2^{k}, A_3^{k}\}$. $\Phi$ is a semialgebraic function that satisﬁes the K{\L} property with exponent $\theta \in [0, 1)$, then either the point $A^{k}$ reaches a critical point after a ﬁnite number of iterations or the sequence $\{A^{k}\}_{k\in\mathbb{N}}$ almost surely demonstrates the ﬁnite length property in expectation,

\[
\sum_{k=0}^{+\infty}\mathbb{E}\|A^{k+1}-A^{k}\|_{F}<+\infty.
\]
\end{theorem}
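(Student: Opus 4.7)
The plan is to apply the standard Kurdyka--\L{}ojasiewicz (K\L) based global convergence template, adapted to the stochastic multi-step inertial setting. The three key ingredients are essentially in place: sufficient decrease of the Lyapunov functional $\Psi_k$ in expectation (Lemma \ref{lyapunov_descent}), a relative error bound that controls $\mathrm{dist}(0,\partial\Phi(A^{k+1}))$ by lagged successive differences $\|A^{k+1-i}-A^{k-i}\|_F$ and the variance proxy $\Gamma_k$ (Lemmas \ref{subgradient_bound} and \ref{lemma_dist2}), and the K\L{} inequality available to us because $\Phi$ is semialgebraic.

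First, I would augment the Lyapunov functional to a form $\widetilde{\Psi}_k = \Psi_k + c\,\Gamma_k$, with $c>0$ chosen large enough so that the extra noise generated by \eqref{Gamma_k1_k} is absorbed; this produces a single nonincreasing quantity $\mathbb{E}_k[\widetilde{\Psi}_{k+1}] \le \widetilde{\Psi}_k - c_1\|A^{k+1}-A^k\|_F^2$ whose subgradient at $A^{k+1}$ is simultaneously controlled (through Lemma \ref{subgradient_bound}) by $\|A^{k+1}-A^k\|_F + \sum_{i}\|A^{k+1-i}-A^{k-i}\|_F + \sqrt{\Gamma_k}$. By Theorem \ref{subsequence_convergence}(ii) the lagged successive differences vanish in $L^2$, and by Definition \ref{vr_definition}(iii) $\mathbb{E}\Gamma_k \to 0$; combined with the boundedness of $\{A^k\}$ from Assumption \ref{assume_ipsgd}(ii), this identifies $\Omega(A^0)$ as a compact subset of $\operatorname{crit}\Phi$ and shows that $\mathbb{E}\widetilde{\Psi}_k$ converges to a common value $\Psi^*$, with $\Phi$ being constant on $\Omega(A^0)$.

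Second, I would invoke the K\L{} property on a uniformizing neighborhood of $\Omega(A^0)$: there exists a desingularizing function $\varphi(s)=c\,s^{1-\theta}$ with $\theta\in[0,1)$ such that $\varphi'(\Phi(A)-\Psi^*)\,\mathrm{dist}(0,\partial\Phi(A)) \geq 1$ on that neighborhood. Lifting to $\widetilde{\Psi}_k$, combining the descent and the error bound, and applying the concavity estimate $\varphi(a)-\varphi(b)\geq \varphi'(a)(a-b)$, I would derive an inequality of the form
\[
\mathbb{E}\|A^{k+1}-A^k\|_F \;\le\; C\,\mathbb{E}\bigl[\varphi(\widetilde{\Psi}_k-\Psi^*)-\varphi(\widetilde{\Psi}_{k+1}-\Psi^*)\bigr] + R_k,
\]
where the remainder $R_k$ collects lagged-difference and $\sqrt{\Gamma_k}$ terms. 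Summing over $k$ causes the $\varphi$-terms to telescope, while $R_k$ is summable by Theorem \ref{subsequence_convergence}(ii) (using Cauchy--Schwarz to pass from $\ell^2$ to $\ell^1$) together with the geometric decay of $\Gamma_k$ in Definition \ref{vr_definition}(ii). The resulting bound is the finite length property $\sum_{k=0}^{+\infty}\mathbb{E}\|A^{k+1}-A^k\|_F<+\infty$, and as always a finite-termination alternative must be recorded for the case in which some $A^k$ already lies in $\operatorname{crit}\Phi$.

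The main obstacle is reconciling the pointwise K\L{} inequality with the fact that descent and error bounds are available only in expectation, and with the multi-step memory that couples several past iterates. The remedy, following the stochastic K\L{} framework of Driggs et al.\ \cite{DriggsTLDS2020}, is to argue almost surely on the event that the iterates eventually enter the K\L{} neighborhood of $\Omega(A^0)$ and to carry out the telescoping on the lifted $\widetilde{\Psi}_k$, which bundles the noise and lagged terms into a single monotone quantity. A secondary technical point, guaranteed by Assumption \ref{assume_ipsgd}(ii) and Remark \ref{remark_03}(i), is the uniformity of the constants appearing in the descent and subgradient estimates, without which the telescoping sum would fail to yield a finite bound independent of the random trajectory.
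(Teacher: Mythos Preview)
Your overall template is the right one and matches what the paper intends: the authors omit the proof entirely and simply point to the stochastic K\L{} machinery in \cite{DriggsTLDS2020, WangH23}, combined with Theorems \ref{subsequence_convergence} and \ref{subgradient_rate}. So at the level of strategy there is nothing to compare.

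There is, however, a genuine gap in your execution. You write that the remainder $R_k$---which collects the lagged terms $\|A^{k+1-i}-A^{k-i}\|_F$ and the noise $\sqrt{\Gamma_k}$---is summable ``by Theorem \ref{subsequence_convergence}(ii) (using Cauchy--Schwarz to pass from $\ell^2$ to $\ell^1$)''. This is circular. Theorem \ref{subsequence_convergence}(ii) gives only $\sum_k\mathbb{E}\|A^{k+1}-A^k\|_F^2<\infty$, and Cauchy--Schwarz does \emph{not} convert $\ell^2$ summability into $\ell^1$ summability; indeed, $\sum_k\mathbb{E}\|A^{k+1}-A^k\|_F<\infty$ is exactly the conclusion you are trying to establish. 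Likewise, the geometric decay \eqref{Gamma_k1_k} for $\Gamma_k$ is contaminated by the same lagged differences, so $\sum_k\sqrt{\Gamma_k}$ cannot be bounded independently either.

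The correct mechanism is a recursive absorption, not an a priori bound on $R_k$. After combining descent, the subgradient estimate of Lemma \ref{subgradient_bound}, and K\L{} concavity, one applies Young's inequality (or $2\sqrt{ab}\le a+b$) to obtain an inequality of the shape
\[
\mathbb{E}\|A^{k+1}-A^k\|_F \;\le\; C\bigl[\varphi(\widetilde{\Psi}_k-\Psi^*)-\varphi(\widetilde{\Psi}_{k+1}-\Psi^*)\bigr] + \lambda\sum_{i=0}^{t+1}\mathbb{E}\|A^{k+1-i}-A^{k-i}\|_F + \text{(genuinely summable noise)},
\]
with $\lambda(t+2)<1$. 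Summing over $k$ and shifting indices then lets the lagged sum on the right be absorbed into the left-hand side with a strictly positive residual coefficient, after which the $\varphi$-telescoping yields the finite length. This self-referential closure is the crux of the multi-step K\L{} argument and cannot be shortcut by appealing to the square-summability already obtained in Theorem \ref{subsequence_convergence}.
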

\section{Numerical experiments}\label{numercial_experiments}

In this section, we present the effectiveness of the proposed Midas-LL1 (Algorithm \ref{ipSGD}) through numerical experiments conducted on two real-world hyperspectral sub-image (HSI) datasets \footnote{\url{http://www.ehu.eus/ccwintco/index.php/Hyperspectral_Remote_Sensing_Scenes.}}, Salinas and Pavia, and two video datasets, Carphone and Foreman.  We aim to demonstrate its superior efficiency through comparisons with the state-of-the-art algorithm ALS-MU-based LL1 algorithms, namely, MVNTF in \cite{qian2017}, which also does not have any structural regularization on the factors except for the nonnegativity constraints.

To evaluate its performance, Midas-LL1 is implemented with three stochastic gradient estimators: vanilla SGD, the unbiased variance-reduced SAGA \cite{DefazioBL14}, and the biased variance-reduced SARAH \cite{NguyenLST17}. This study facilitates a comparative analysis of gradient bias properties and variance reduction strategies. Additionally, we consider multi-step inertial acceleration with $t=3$, one-step inertial acceleration with $t=1$, and also no  acceleration version  with $t=0$, respectively. The combinations of these stochastic gradient estimators and multi-step inertial acceleration schemes are denoted as 3-Midas-SGD/SAGA/SARAH, 1-Midas-SGD/SAGA/SARAH, and 0-Midas-SGD/SAGA/SARAH, respectively.


Because the proposed algorithms under test involve different operations and subproblem-solving strategies, finding a unified complexity measure can be challenging, especially with multi-step acceleration that varies with different $t$. To address this issue, 
all algorithms are run for a fixed 200 epochs, and we present the final outcomes obtained. When handling
large-scale tensor decomposition problems in video dataset experiments, the MVNTF algorithms typically
run with extra lengthy time, yet fail to meet the specified stopping criterion. In this case,  
we set the maximal number of iterations to be 1000. 
Additionally, we evaluate the cost value of the algorithms about runtime, providing a comprehensive comparison of performance.

In all experiments,  we set $L_r=L$, for $r=1,\ldots,R$, stepsize $\eta^k=\eta=0.1$,   the batchsize of sampled fibers  $B=2L$,  {the inertial parameters $\alpha^k=0.3\times\frac{k-1}{k+2}$ and  $\beta^{k}=0.8\times\frac{k-1}{k+2}$.}

The peak signal-to-noise ratio (PSNR) values are considered to measure the numerical performance for the real dataset experiments. The PSNR value is defined as
\[
\text{PSNR}=10\log_{10}\left(\frac{\mathcal{X}^2_{\max}\prod_{i=1}^{N}I_{i}}{\|\bar{\mathcal{X}}-\mathcal{X}\|_F^2}\right),
\]
where $\mathcal{X}_{\max}$ is the maximum intensity of the original dataset $\mathcal{X}$ and $\bar{\mathcal{X}}=\sum_{r=1}^R(A_{1,r} \cdot A_{2,r}^{\top}) \circ \boldsymbol{c}_r$ denotes the rank-$\left(L, L, 1\right)$ BTD approximation of $\mathcal{X}$.   We also
adopt cross correlation (CC), root mean square error
(RMSE), spectral angle mapper (SAM) to illustrate the stability of all algorithms.

\subsection{Comparisons of zero-step, one-step, and three-step Midas-LL1}

In this subsection, we present the performance of various methods on hyperspectral sub-image datasets (dimensions: height × width × bands) to compare the effectiveness of Midas-LL1 with zero-step, one-step, and three-step accelerations. Hyperspectral images (HSI) capture both spatial and spectral information by recording reflectance values across multiple wavelengths, making them a valuable tool for applications such as remote sensing and material classification. The spectral–spatial joint structure of HSI is naturally suited for Midas-LL1 algorithms because rank-$\left(L_r, L_r, 1\right)$ block-term decomposition provides a particularly effective framework for modeling this structure. The datasets used for the experiments include Salinas (\(80 \times 84 \times 204\)), which focuses on agricultural regions, and Pavia Centre (\(288 \times 352 \times 300\)), which captures an urban environment with intricate spatial patterns.

\subsubsection{Salinas}
We first present the numerical experiments on  Salinas dataset with different values of $R$ and $L$  in Figure \ref{Salinas}. Midas-LL1 with multi-step acceleration (\(t=3\)) outperforms both one-step (\(t=1\)) and zero-step (\(t=0\)) methods. 
{This confirms the effectiveness of our proposed multi-step inertial accelerated   method in solving the tensor BTD problem.}
Notably, Midas-LL1 with the variance-reduced SGD estimator outperforms Midas-LL1 with the vanilla SGD estimator in three-step, one-step, and zero-step settings, with the latter two showing even more pronounced improvements. 


{We continue to show the final performance of the return tensor  in terms of  RMSE, SAM,  CC,  and PSNR} in Table \ref{Salinas_table}. From the results, we see that both  3-Midas-SAGA and 3-Midas-SARAH  achieve superior performance, demonstrating that variance-reduced stochastic gradient estimators effectively enhance reconstruction accuracy. Their consistent performance across different configurations of $R$ and $L$ further highlights the stability and adaptability of these methods for hyperspectral image data.

The rank-\((L_r, L_r, 1)\) block-term tensor decomposition approximation results for the Salinas dataset are shown in Figures  \ref{salinas_compare}. These results demonstrate that the proposed Midas-LL1 method with multi-step acceleration (\(t=3\)) surpasses both one-step (\(t=1\)) and zero-step (\(t=0\)) methods in recovering the original Salinas dataset, as illustrated in Figure \ref{salinas_compare} with varying \(R\) and \(L\). Furthermore, methods employing SAGA and SARAH stochastic gradient estimators consistently achieve better performance compared to those based on SGD, particularly in producing clearer boundaries between different regions in the images.

\begin{figure}[!htb]
	\setlength\tabcolsep{2pt}
	\centering
	\begin{tabular}{ccc}
		\includegraphics[width=0.32\textwidth]{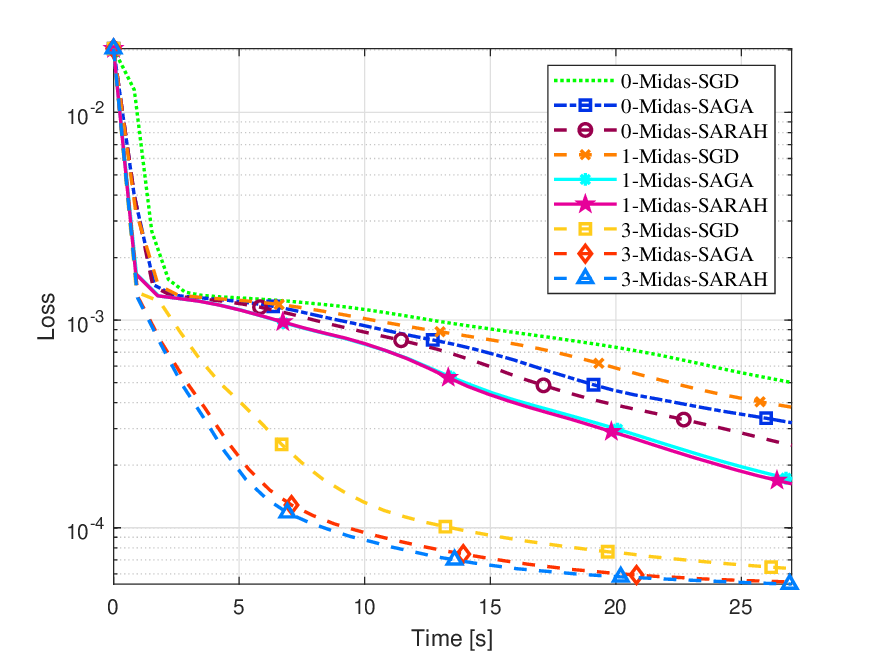}&
		\includegraphics[width=0.32\textwidth]{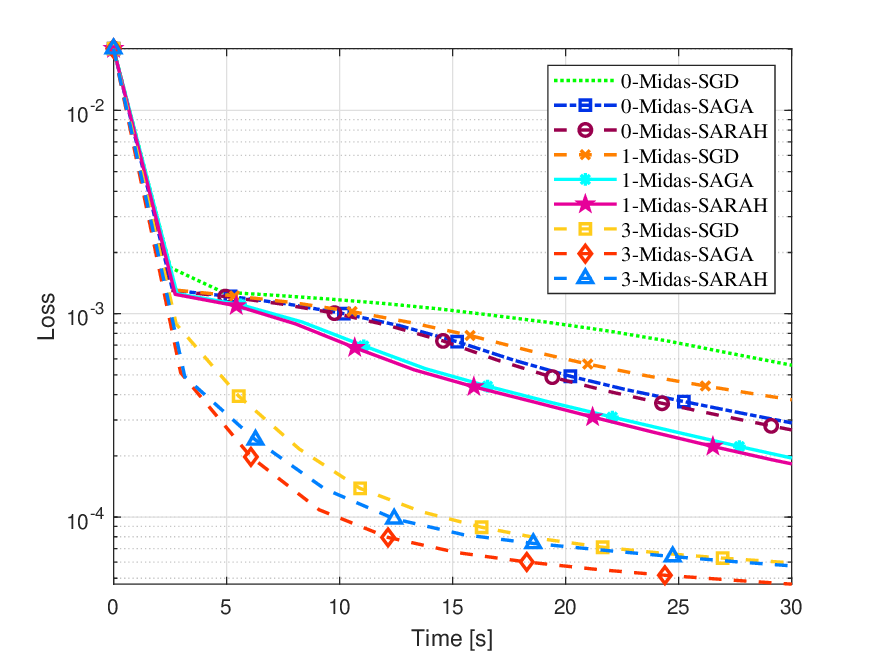}&
        \includegraphics[width=0.32\textwidth]{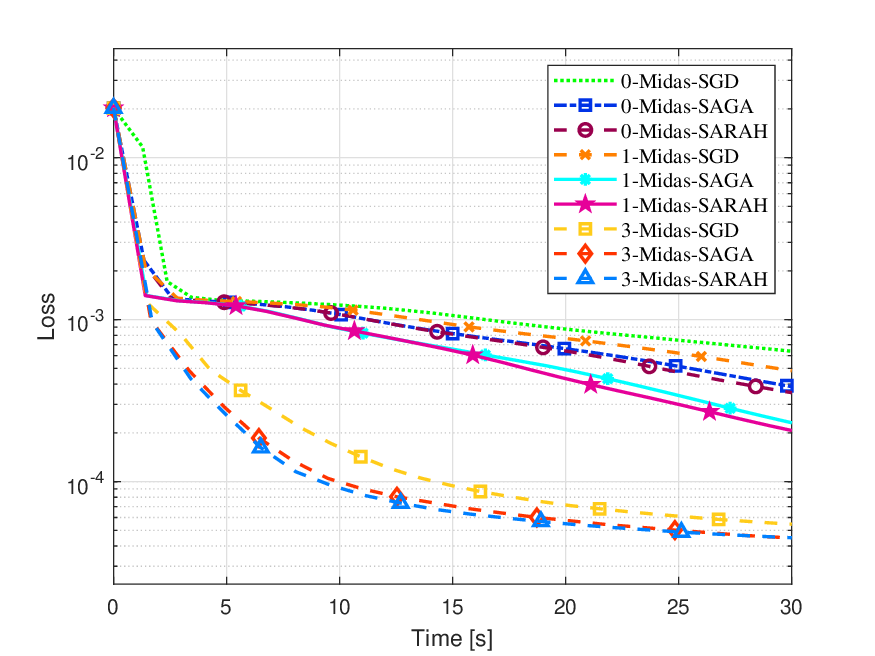}\\
	  (a) $R=2$, $L=15$&(b) $R=3$, $L=12$&(c) $R=4$, $L=10$\\
	\end{tabular}
	\caption{Numerical experiments on  the $80 \times 84 \times 204 $ Salinas dataset with different values of $R$ and $L$. }
	\label{Salinas}
\end{figure}

\begin{table*}[!htb]	
	\centering
	\fontsize{7}{9}\selectfont
	\caption{
		Comparisons of Midas-LL1 with zero-step, one-step, and three-step inertial acceleration on Salinas dataset. The parameters $\alpha^k=0.3\times\frac{k-1}{k+2}$, $\beta^{k}=0.8\times\frac{k-1}{k+2}$, and $\eta^{k}=0.1$.
		``$a_0$'' denotes 0-Midas-SGD; ``$a_1$'' denotes 0-Midas-SAGA; ``$a_2$'' denotes 0-Midas-SARAH; ``$b_0$'' denotes 1-Midas-SGD; ``$b_1$'' denotes 1-Midas-SAGA; ``$b_2$'' denotes 1-Midas-SARAH;  ``$c_0$'' denotes 3-Midas-SGD; ``$c_1$'' denotes 3-Midas-SAGA; ``$c_2$'' denotes 3-Midas-SARAH.}
	\label{Salinas_table}
	\begin{footnotesize}
	  
	\begin{tabular}{c|c|c| c c c| c c c |c c c}
		\hline
		$R$&$L$&Index &$a_0$&$a_1$&$a_2$ &$b_0$&$b_1$&$b_2$ &$c_0$&$c_1$&$c_2$ 
		\cr\hline	
		\multirow{4}{*}{2}&\multirow{4}{*}{15}&RMSE (0)&0.0413&0.0211&0.0210&0.0247 &0.0151&0.0150&0.0109&0.0101&\textbf{0.0101}\\
		&&SAM (0)&0.1585&0.0718&0.0716&0.0864 &0.0524&0.0523&0.0406&0.0382&\textbf{0.0379}\\

		&&CC (1)&0.1153&0.8178&0.8191&0.7328&0.8977&0.8985&0.9269&0.9346&\textbf{0.9354}\\
		&&PSNR ($\infty$)&27.676&33.528&33.543&32.147&36.440&36.483&39.231&39.934&\textbf{39.946}\\ \cline{1-12}
		\multirow{4}{*}{3}&\multirow{4}{*}{12}&RMSE (0)&0.0281&0.0197&0.0195&0.0225 &0.0152&0.0153&0.0101&\textbf{0.0087}&0.0093\\
		&&SAM (0)&0.0969&0.0676&0.0669&0.0767&0.0527&0.0532&0.0392&\textbf{0.0346}&0.0359\\
		&&CC (1)&0.6063&0.8235&0.8273&0.7676&0.8851&0.8843&0.9309&\textbf{0.9438}&0.9433\\
		&&PSNR ($\infty$)&31.021&34.103&34.207&36.360&38.814&36.333&39.886&\textbf{41.218}&40.644\\\cline{1-12}
		\multirow{4}{*}{4}&\multirow{4}{*}{10}&RMSE (0)&0.0277&0.0175&0.0175&0.0193 &0.0133&0.0132&0.0090&0.0083&\textbf{0.0082}\\
		&&SAM (0)&0.1002&0.0620&0.0626&0.0685&0.0479&0.0479&0.0350&\textbf{0.0326}&0.0331\\
		&&CC (1)&0.5754&0.8625&0.8628&0.8399&0.9071&0.9072&0.9446&\textbf{0.9513}&0.9500\\
		&&PSNR ($\infty$)&31.144&35.145&35.115&34.283&37.549&37.579&40.877&41.606&\textbf{41.715}\\\hline
		
	\end{tabular}
	\end{footnotesize}
\end{table*}

\begin{figure}[!htb]
	
	\centering
    \includegraphics[width=1\linewidth]{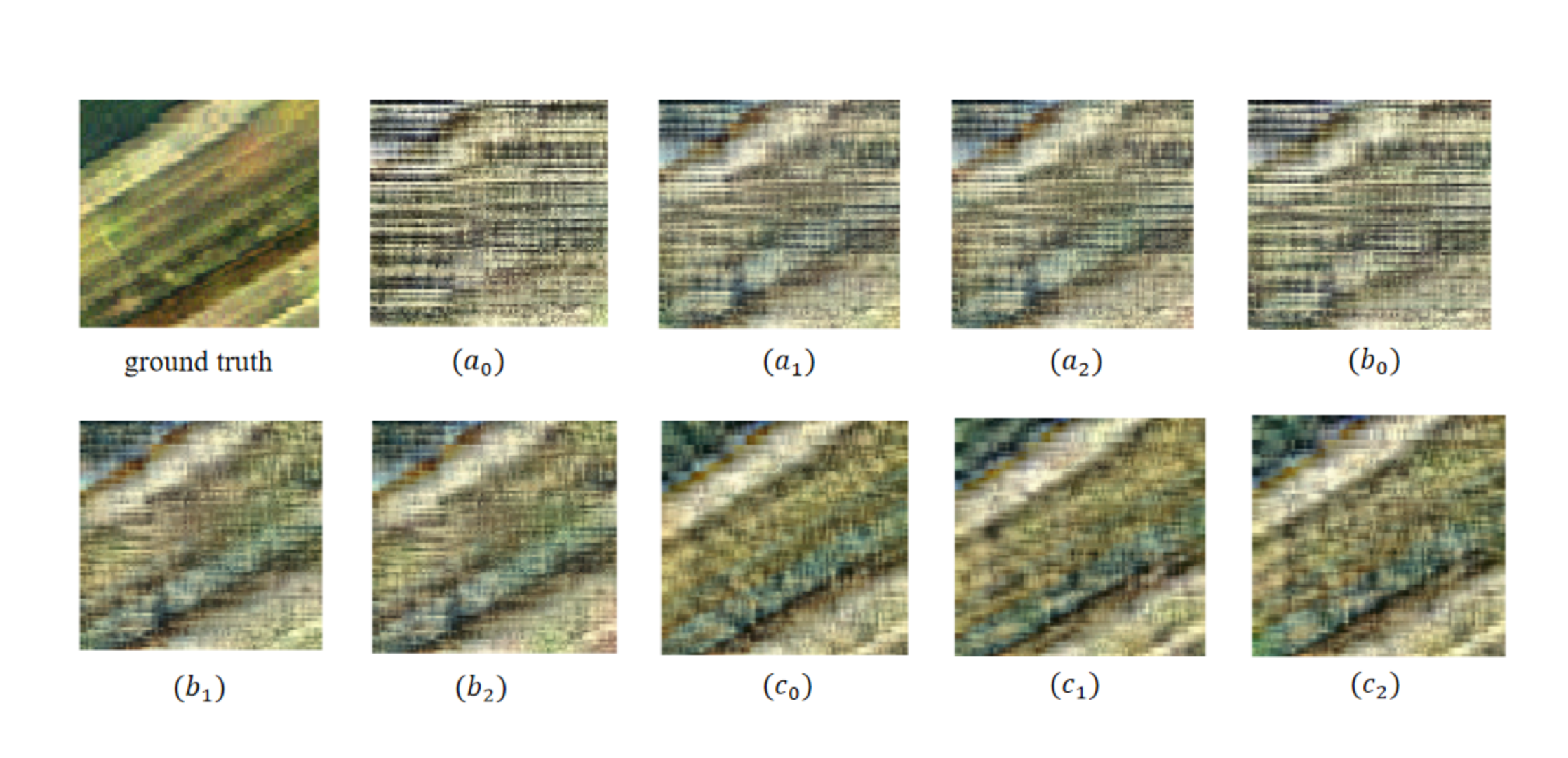}
	
	\caption{The results of Midas-LL1 for rank-\((L_r, L_r, 1)\) block-term tensor decomposition approximation on the 15-th frame of the \emph{Salinas} dataset with \(R=3\) and \(L=12\). 
		``$a_0$'' denotes 0-Midas-SGD; ``$a_1$'' denotes 0-Midas-SAGA; ``$a_2$'' denotes 0-Midas-SARAH; ``$b_0$'' denotes 1-Midas-SGD; ``$b_1$'' denotes 1-Midas-SAGA; ``$b_2$'' denotes 1-Midas-SARAH;  ``$c_0$'' denotes 3-Midas-SGD; ``$c_1$'' denotes 3-Midas-SAGA; ``$c_2$'' denotes 3-Midas-SARAH.}
	\label{salinas_compare}
\end{figure}

\subsubsection{Pavia Centre}
Next, we analyze the performance of the algorithms on the Pavia Centre dataset, and the results are presented in Figure \ref{Pavia} and Table \ref{Pavia_table}. 
{Again, compared with  Midas-LL1 Methods with one-step or zero-step acceleration,} Midas-LL1 Methods with three-step acceleration (3-Midas-SGD, 3-Midas-SAGA, 3-Midas-SARAH) demonstrate the fastest convergence and achieve the lowest loss across all configurations of $R$ and $L$, indicating the effectiveness of multi-step acceleration in improving performance. Among the three-step methods, 3-Midas-SAGA and 3-Midas-SARAH consistently outperform 3-Midas-SGD in terms of both convergence speed and final loss, highlighting the advantages of variance reduction techniques.
\begin{figure}[!htb]
	\setlength\tabcolsep{2pt}
	\centering
	\begin{tabular}{ccc}
		\includegraphics[width=0.32\textwidth]{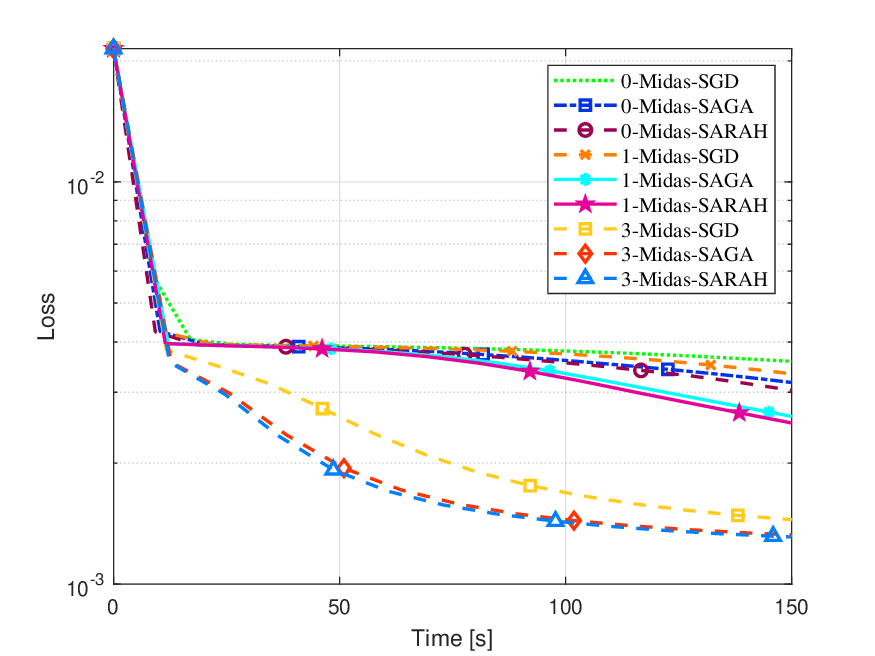}&
		\includegraphics[width=0.32\textwidth]{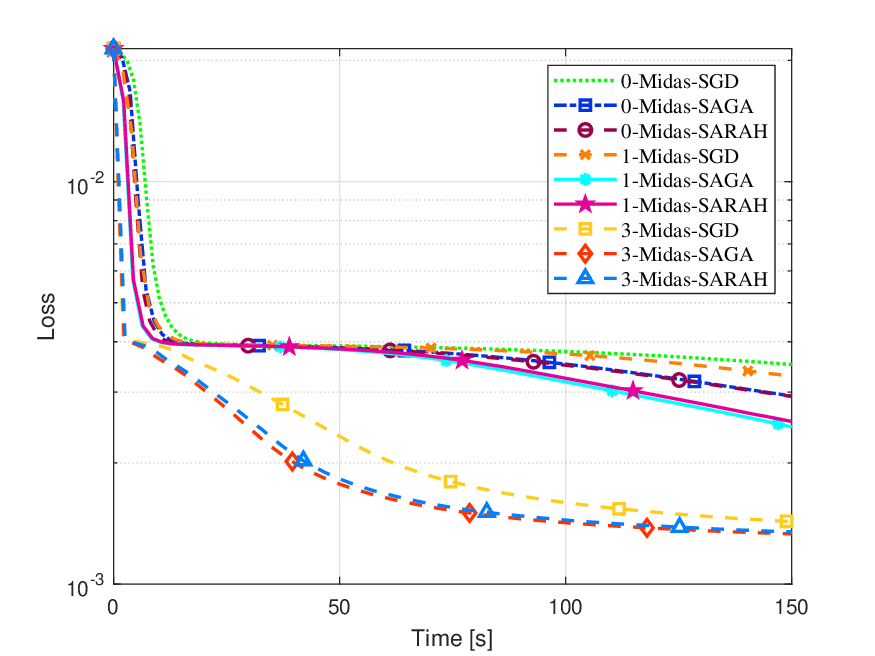}&
        \includegraphics[width=0.32\textwidth]{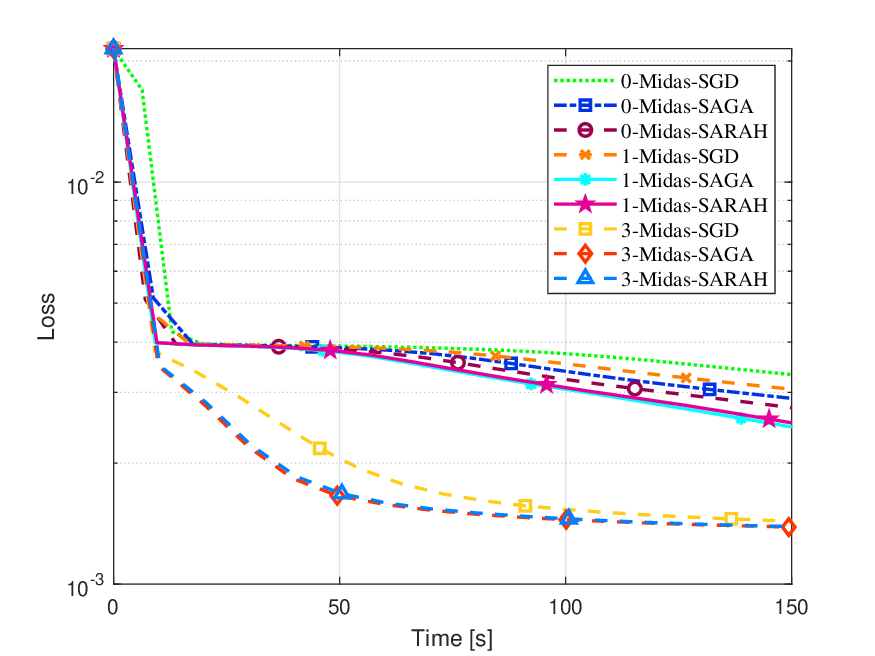}\\
		(a) $R=2$, $L=35$&(b) $R=3$, $L=25$&(c) $R=4$, $L=15$\\
	\end{tabular}
	\caption{Numerical experiments on  the $610 \times 340 \times 103 $ Pavia Centre dataset with different values of $R$ and $L$. }
	\label{Pavia}
\end{figure}

\begin{table*}[!htb]	
	\centering
	\fontsize{7}{9}\selectfont
	\caption{
		Comparisons of Midas-LL1 with zero-step, one-step, and three-step inertial acceleration on Pavia Centre dataset. The parameters $\alpha^k=0.3\times\frac{k-1}{k+2}$, $\beta^{k}=0.8\times\frac{k-1}{k+2}$, and $\eta^{k}=0.1$.
		``$a_0$'' denotes 0-Midas-SGD; ``$a_1$'' denotes 0-Midas-SAGA; ``$a_2$'' denotes 0-Midas-SARAH; 
        ``$b_0$'' denotes 1-Midas-SGD; ``$b_1$'' denotes 1-Midas-SAGA; ``$b_2$'' denotes 1-Midas-SARAH;  ``$c_0$'' denotes 3-Midas-SGD; ``$c_1$'' denotes 3-Midas-SAGA; ``$c_2$'' denotes 3-Midas-SARAH.}
	\label{Pavia_table}
	\begin{footnotesize}
	  
	\begin{tabular}{c|c|c| c c c| c c c |c c c}
		\hline
		$R$&$L$&Index &$a_0$&$a_1$&$a_2$ &$b_0$&$b_1$&$b_2$ &$c_0$&$c_1$&$c_2$ 
		\cr\hline	
		\multirow{4}{*}{2}&\multirow{4}{*}{35}&RMSE (0)&0.0739&0.0610&0.0603&0.0627&0.0540 &0.0539&0.0503&\textbf{0.0491}&0.0491\\
		&&SAM (0)
        &0.2453&0.2041&0.1999&0.2071 &0.1685&0.1684&0.1534&0.1501&\textbf{0.1500}\\

		&&CC (1)&0.5951&0.7431&0.7501&0.7277&0.8058&0.8060&0.8328&0.8401&\textbf{0.8402}\\
		&&PSNR ($\infty$)&22.626&24.294&24.397&24.056&25.358&25.360&25.976&\textbf{26.180}&26.177\\ \cline{1-12}
		\multirow{4}{*}{3}&\multirow{4}{*}{25}&RMSE
        (0)&0.0676&0.0550&0.0552&0.0571&0.0517 &0.0518&0.0503&0.0492&\textbf{0.0495}\\
		&&SAM (0)&0.2265&0.1761&0.1764&0.1843&0.1650&0.1648&0.1596&0.1586&\textbf{0.1586}\\
		&&CC (1)&0.6745&0.7995&0.7985&0.7823&0.8248&0.8246&0.8261&\textbf{0.8349}&0.8323\\
		&&PSNR ($\infty$)&23.401&25.188&25.168&24.871&25.724&25.716&25.970&\textbf{26.158}&26.111\\\cline{1-12}
		\multirow{4}{*}{4}&\multirow{4}{*}{15}&RMSE (0)&0.0611&0.0555&0.0555&0.0561 &0.0528&0.0528&0.0510&\textbf{0.0503}&0.0508\\
		&&SAM (0)&0.1902&0.1744&0.1746&0.1757&0.1635&0.1636&0.1557&\textbf{0.1536}&0.1561\\
		&&CC (1)&0.7409&0.7904&0.7903&0.7867&0.8126&0.8126&0.8258&\textbf{0.8308}&0.8274\\
		&&PSNR ($\infty$)&24.285&25.116&25.109&25.028&25.550&25.547&25.854&\textbf{25.976}&25.891\\\hline
		
	\end{tabular}
	\end{footnotesize}
\end{table*}

\subsection{ 
Comparing Midas-LL1 under one-step and three-step acceleration with MVNTF.}

In this subsection, we evaluate two video datasets\footnote{http://trace.eas.asu.edu/yuv/} (with dimensions: length × width × frames) to compare the performance of Midas-LL1 under one-step and three-step acceleration with MVNTF. The datasets used for testing include Carphone ($144 \times 176 \times 396 $) and Foreman ($288 \times 352 \times 300$). These video datasets, collected and maintained by Arizona State University, are widely used in research. 

\subsubsection{Carphone}
 We evaluate the algorithms on the Carphone dataset, terminating all runs at 80 seconds, and present the results in Figure \ref{carphone}. The performance of the three Midas-LL1 methods with $t=3$, namely 3-Midas-SGD, 3-Midas-SAGA, and 3-Midas-SARAH,  consistently outperforms MVNTF and one-step Midas-LL1 methods from $R=3$ to $R=5$. For $R=3$ and $L=2$, the one-step accelerated methods with $t=1$ 
 also outperform MVNTF. However, as $R$ increases, 1-Midas-SGD only demonstrates a very slight improvement over MVNTF, shown in Figure \ref{carphone} (c), while 1-Midas-SAGA and 1-Midas-SARAH exhibit stable performance, highlighting the effectiveness of variance-reduced stochastic gradient estimators. 
 
{We continue to show the final performance of the return tensor  in terms of  RMSE, SAM,  CC,  and PSNR} in Table \ref{Carphone_table}. The results show that  3-Midas-SARAH demonstrates robustness in reconstructing and preserving both spatial and spectral information across various settings. However, when $R=4$, 3-Midas-SGD achieves superior performance. This result highlights that {the potential advantage  of the vanilla SGD approach.}
 
 
\begin{figure}[!htb]
	\setlength\tabcolsep{2pt}
	\centering
	\begin{tabular}{ccc}
		\includegraphics[width=0.32\textwidth]{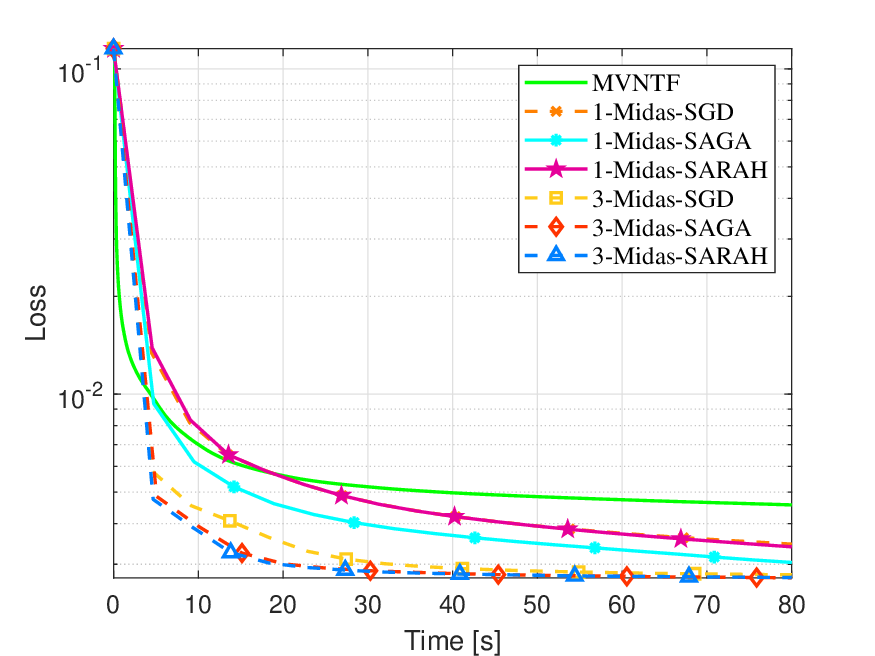}&
		\includegraphics[width=0.32\textwidth]{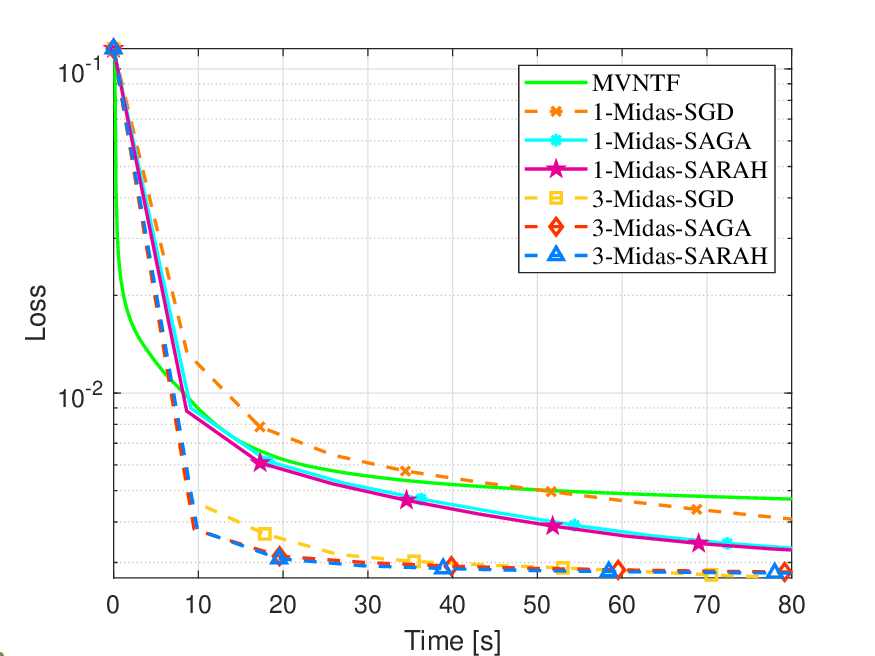}&
        \includegraphics[width=0.32\textwidth]{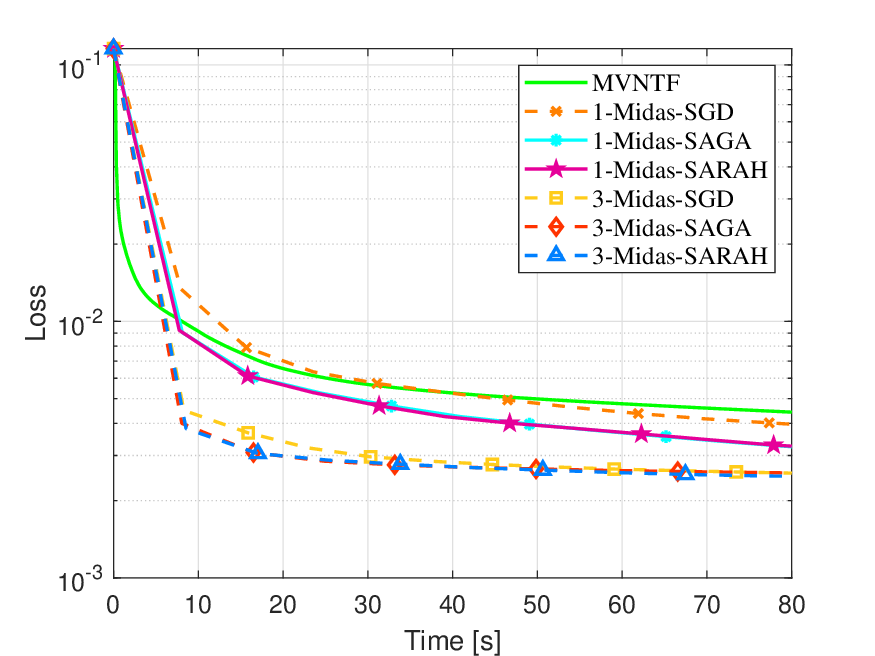}\\
	  (a) $R=3$, $L=20$&(b) $R=4$, $L=14$&(c) $R=5$, $L=10$\\
	\end{tabular}
	\caption{Numerical experiments of Midas-LL1  under 1-step and 3-step acceleration with MVNTF on the $144 \times 176 \times 396 $ Carphone dataset. }
	\label{carphone}
\end{figure}

\begin{table*}[!htb]	
	\centering
	\fontsize{7}{9}\selectfont
	\caption{
		Comparisons of Midas-LL1 under one-step and three-step inertial acceleration with MVNTF on Carphone dataset. The parameters $\alpha^k=0.3\times\frac{k-1}{k+2}$, $\beta^{k}=0.8\times\frac{k-1}{k+2}$, and $\eta^{k}=0.1$.
		``a'' denotes MVNTF; 
        ``$b_0$'' denotes 1-Midas-SGD; ``$b_1$'' denotes 1-Midas-SAGA; ``$b_2$'' denotes 1-Midas-SARAH;  ``$c_0$'' denotes 3-Midas-SGD; ``$c_1$'' denotes 3-Midas-SAGA; ``$c_2$'' denotes 3-Midas-SARAH.}
	\label{Carphone_table}
	\begin{small}
	  
	\begin{tabular}{c|c|c|c| c |c c c |c c c}
		\hline
		Dataset&$R$&$L$&Index & a&
        $b_0$ &  $b_1$ & $b_2$ &
         $c_0$ &  $c_1$ & $c_2$ 
		\cr\hline	
		&\multirow{4}{*}{3}&\multirow{4}{*}{20}&RMSE (0)&0.0877&0.0819&0.0765&0.0750 &0.0744&0.0736&\textbf{0.0732}\\
		&&&SAM (0)&0.2175&0.2071&0.1862&0.1792 &0.1779&0.1751&\textbf{0.1745}\\

		&&&CC (1)&0.9444&0.9514&0.9577&0.9595&0.9600&0.9609&\textbf{0.9613}\\
		&&&PSNR ($\infty$)&21.136&21.735&22.321&22.502&22.566&22.660&\textbf{22.707}\\ \cline{2-11}
		&\multirow{4}{*}{4}&\multirow{4}{*}{14}&RMSE (0)&0.0941&0.0821&0.0776&0.0776 &\textbf{0.0718}&0.0737&0.0739\\
		&&&SAM (0)&0.2391&0.2003&0.1914&0.1916&\textbf{0.1802}&0.1813&0.1810\\
		\emph{Carphone}&&&CC (1)&0.9357&0.9513&0.9566&0.9566&\textbf{0.9630}&0.9609&0.9607\\
		&&&PSNR ($\infty$)&21.072&22.262&22.749&22.747&\textbf{23.419}&23.194&23.174\\\cline{2-11}
		&\multirow{4}{*}{5}&\multirow{4}{*}{10}&RMSE (0)&0.0939&0.0779&0.0745&0.0746 &0.0698&0.0698&\textbf{0.0693}\\
		&&&SAM (0)&0.2294&0.1802&0.1731&0.1719&0.1704&0.1708&\textbf{0.1648}\\
		&&&CC (1)&0.9356&0.9560&0.9598&0.9597&0.9649&0.9650&\textbf{0.9654}\\
		&&&PSNR ($\infty$)&20.544&22.165&22.554&22.542&23.117&23.120&\textbf{23.184}\\\hline
		
	\end{tabular}
	\end{small}
\end{table*}

\subsubsection{Foreman}
Finally, we analyze the algorithms on the Foreman video dataset in Figure \ref{Foreman} and Table \ref{Foreman_table}. As \(R\) increases, the performance gap between MVNTF and Midas-LL1 widens, with three-step acceleration methods (3-Midas-SGD, 3-Midas-SAGA, 3-Midas-SARAH) consistently demonstrating superiority. Particularly, variance-reduced estimators  SAGA and SARAH achieve faster convergence and lower loss across all configurations, outperforming MVNTF and one-step methods, especially under {larger} \(R\) and \(L\) settings. This trend indicates that Midas-LL1 is better suited for higher numbers of components \(R\) in rank-\((L_r, L_r, 1)\) block-term tensor decomposition. While one-step methods (1-Midas-SGD, 1-Midas-SAGA, 1-Midas-SARAH) perform moderately well, they exhibit higher loss compared to three-step methods. 

\begin{figure}[!htb]
	\setlength\tabcolsep{2pt}
	\centering
	\begin{tabular}{ccc}
		\includegraphics[width=0.32\textwidth]{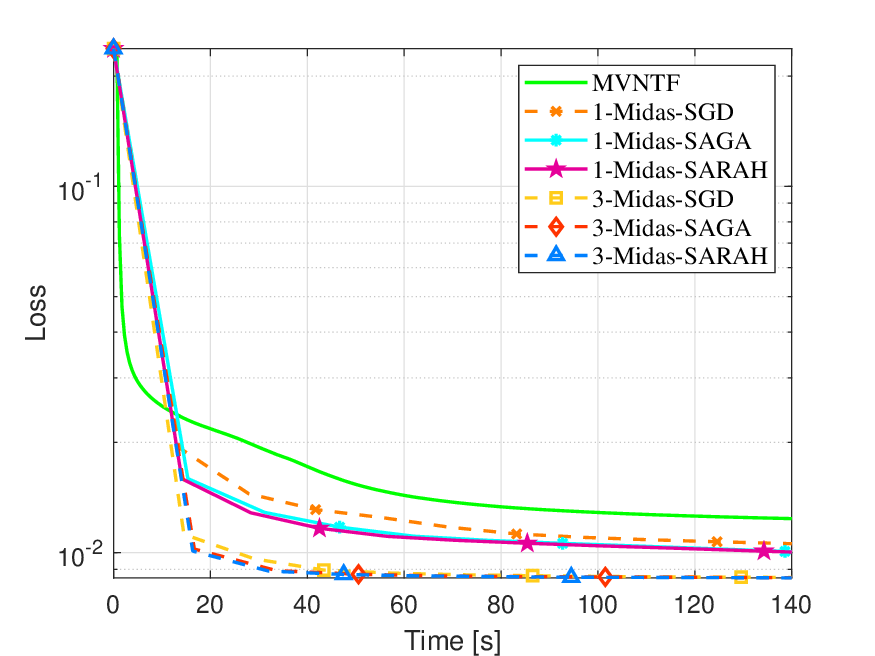}&
		\includegraphics[width=0.32\textwidth]{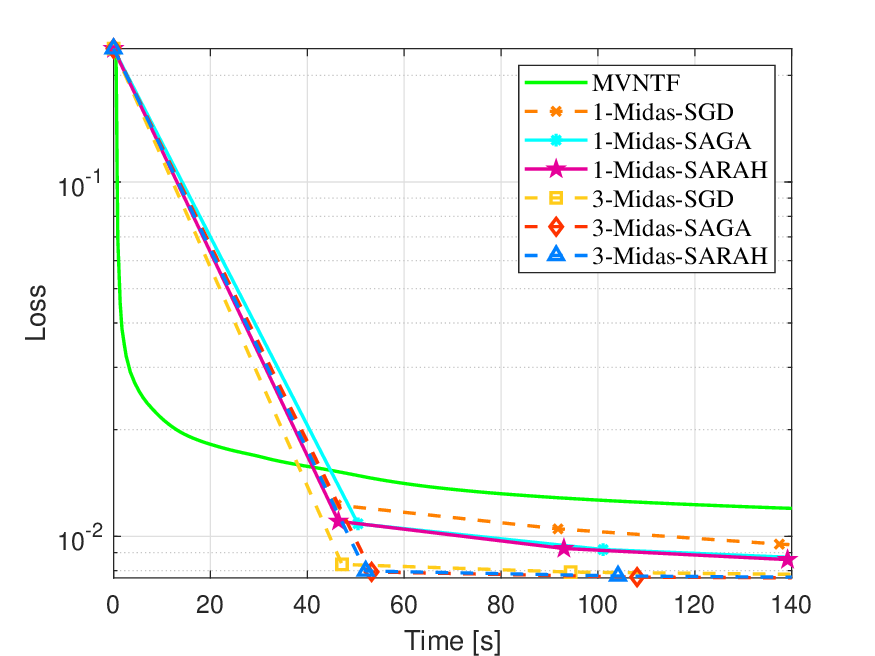}&
        \includegraphics[width=0.32\textwidth]{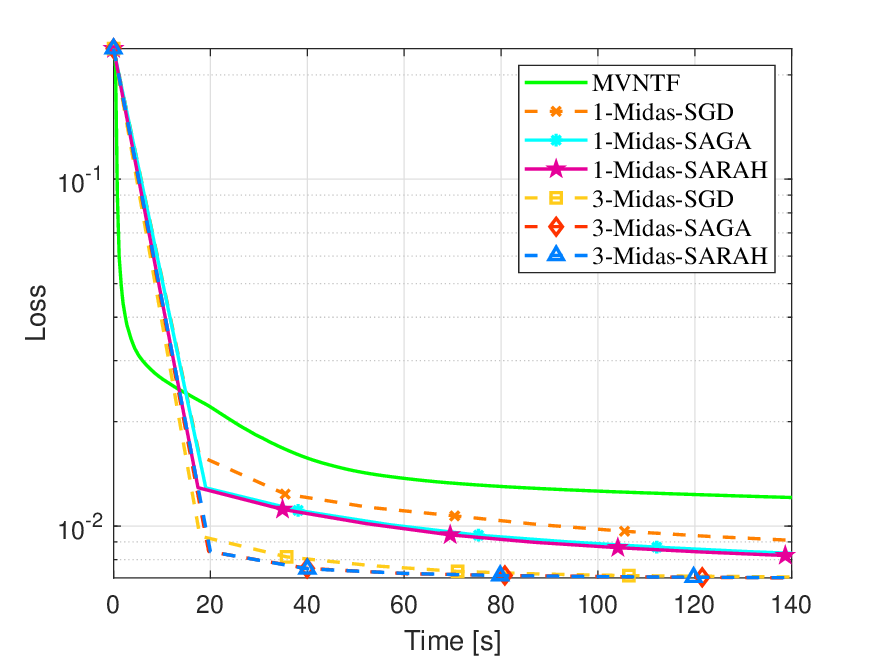}\\
		(a) $R=3$, $L=20$&(b) $R=4$, $L=15$&(c) $R=5$, $L=10$\\
	\end{tabular}
	\caption{Numerical experiments of Midas-LL1  under 1-step and 3-step acceleration with MVNTF on the $288 \times 352 \times 300 $ Foreman dataset. }
	\label{Foreman}
\end{figure}

\begin{table*}[!htb]	
	\centering
	\fontsize{7}{9}\selectfont
	\caption{
		Comparisons of Midas-LL1 under one-step and three-step inertial acceleration with MVNTF on Foreman dataset. The parameters $\alpha^k=0.3\times\frac{k-1}{k+2}$, $\beta^{k}=0.8\times\frac{k-1}{k+2}$, and $\eta^{k}=0.1$.
		``a'' denotes MVNTF;``$b_0$'' denotes 1-Midas-SGD; ``$b_1$'' denotes 1-Midas-SAGA; ``$b_2$'' denotes 1-Midas-SARAH;  ``$c_0$'' denotes 3-Midas-SGD; ``$c_1$'' denotes 3-Midas-SAGA; ``$c_2$'' denotes 3-Midas-SARAH.}
	\label{Foreman_table}
	\begin{small}
	  
	\begin{tabular}{c|c|c|c| c |c c c |c c c}
		\hline
		Dataset&$R$&$L$&Index & a&
        $b_0$ &  $b_1$ & $b_2$ &
         $c_0$ &  $c_1$ & $c_2$ 
		\cr\hline	
		&\multirow{4}{*}{3}&\multirow{4}{*}{20}&RMSE (0)&0.1530&0.1381&0.1318&0.1317 &0.1305&0.1305&\textbf{0.1304}\\
		&&&SAM (0)&0.2403&0.2167&0.2069&0.2067 &0.2046&\textbf{0.2045}&0.2047\\

		&&&CC (1)&0.7914&0.8306&0.8468&0.8469&0.8500&0.8501&\textbf{0.8503}\\
		&&&PSNR ($\infty$)&16.306&17.197&17.601&17.605&17.690&17.690&\textbf{17.691}\\ \cline{2-11}
		&\multirow{4}{*}{4}&\multirow{4}{*}{15}&RMSE (0)&0.1465&0.1292&0.1265&0.1265&0.1233&\textbf{0.1228}&0.1228\\
		&&&SAM (0)&0.2273&0.2012&0.1977&0.1977&0.1946&\textbf{0.1936}&0.1938\\
		\emph{Foreman}&&&CC (1)&0.8051&0.8527&0.8597&0.8597&0.8675&0.8688&\textbf{0.8689}\\
		&&&PSNR ($\infty$)&16.686&17.776&17.961&17.959&18.181&18.219&\textbf{18.219}\\\cline{2-11}
		&\multirow{4}{*}{5}&\multirow{4}{*}{10}&RMSE (0)&0.1457&0.1246&0.1221&0.1197 &\textbf{0.1186}&0.1186&0.1187\\
		&&&SAM (0)&0.2261&0.1958&0.1916&0.1885&0.1871&\textbf{0.1870}&0.1873\\
		&&&CC (1)&0.8075&0.8641&0.8696&0.8755&0.8782&\textbf{0.8783}&0.8780\\
		&&&PSNR ($\infty$)&16.730&18.090&18.267&18.439&18.518&\textbf{18.518}&18.508\\\hline
		
	\end{tabular}
	\end{small}
\end{table*}

\section{Conclusion}\label{conclusion}

In this paper, we proposed Midas-LL1, a multi-step inertial accelerated block-randomized stochastic gradient descent method designed for the rank-\((L_r, L_r, 1)\) block-term tensor decomposition problem. Leveraging an extended multi-step and multi-block variance-reduced stochastic estimator, we demonstrated that the proposed algorithm achieves a sublinear convergence rate for the generated subsequence. Furthermore, we established the global convergence of the sequence generated by Midas-LL1 using a novel multi-step Lyapunov function, proving that the algorithm requires at most \(\mathcal{O}(\varepsilon^{-2})\) iterations in expectation to reach an \(\varepsilon\)-stationary point. Extensive experiments on synthetic and real-world datasets validated the effectiveness of Midas-LL1, demonstrating its superior convergence behavior with multi-step inertial acceleration compared to one-step methods and existing algorithms. The results highlight the advantages of the proposed multi-step approach in achieving better performance and faster convergence.


\section*{Declarations}
{\bf Funding:} {This research is supported by  the National Natural Science Foundation of China (NSFC) grant  12401415, 12471282, 12171021, the R\&D project of Pazhou Lab (Huangpu) (Grant no. 2023K0603), and the Fundamental Research Funds for the Central Universities (Grant No. YWF-22-T-204)}.

\noindent{\bf Competing interests:} The authors have no competing interests to declare that are relevant to the content of this article.

\noindent{\bf Data  Availability Statement:} Data will be made available on reasonable request.

\bibliographystyle{abbrv}
\bibliography{main}

\begin{thebibliography}{10}

\bibitem{auslender1992}
A.~Auslender.
\newblock Asymptotic properties of the fenchel dual functional and applications
  to decomposition problems.
\newblock {\em J. Optim. Theory Appl.}, 73(3):427--449, 1992.

\bibitem{bioucas2012}
J.~M. Bioucas-Dias et~al.
\newblock Hyperspectral unmixing overview: Geometrical, statistical, and sparse
  regression-based approaches.
\newblock {\em IEEE J. Sel. Topics Appl. Earth Observ. Remote Sens.},
  5(2):354--379, Apr 2012.

\bibitem{BolteST14}
J.~Bolte, S.~Sabach, and M.~Teboulle.
\newblock Proximal alternating linearized minimization for nonconvex and
  nonsmooth problems.
\newblock {\em Math. Program.}, 146(1-2):459--494, 2014.

\bibitem{BolteSTV18First}
J.~Bolte, S.~Sabach, M.~Teboulle, and Y.~Vaisbourd.
\newblock First order methods beyond convexity and {L}ipschitz gradient
  continuity with applications to quadratic inverse problems.
\newblock {\em {SIAM} J. Optim.}, 28(3):2131--2151, 2018.

\bibitem{CheW20}
M.~Che and Y.~Wei.
\newblock Multiplicative algorithms for symmetric nonnegative tensor
  factorizations and its applications.
\newblock {\em J. Sci. Comput.}, 83(3):53, 2020.

\bibitem{ComonGLM08}
P.~Comon, G.~H. Golub, L.~Lim, and B.~Mourrain.
\newblock Symmetric tensors and symmetric tensor rank.
\newblock {\em SIAM J. Matrix Anal. Appl.}, 30(3):1254--1279, 2008.

\bibitem{DeLathauwer2008a}
L.~De~Lathauwer.
\newblock Decompositions of a higher-order tensor in block terms—part {I}:
  Lemmas for partitioned matrices.
\newblock {\em SIAM J. Matrix Anal. Appl.}, 30(3):1022--1032, 2008.

\bibitem{DeLathauwer2008b}
L.~De~Lathauwer.
\newblock Decompositions of a higher-order tensor in block terms—part {II}:
  Definitions and uniqueness.
\newblock {\em SIAM J. Matrix Anal. Appl.}, 30(3):1033--1066, 2008.

\bibitem{DeLathauwer2008c}
L.~De~Lathauwer and D.~Nion.
\newblock Decompositions of a higher-order tensor in block terms—part {III}:
  Alternating least squares algorithms.
\newblock {\em SIAM J. Matrix Anal. Appl.}, 30(3):1067--1083, 2008.

\bibitem{DefazioBL14}
A.~Defazio, F.~R. Bach, and S.~Lacoste{-}Julien.
\newblock {SAGA:} {A} fast incremental gradient method with support for
  non-strongly convex composite objectives.
\newblock In {\em Advances in Neural Information Processing Systems 27}, pages
  1646--1654, 2014.

\bibitem{23M1557246}
I.~Domanov, N.~Vervliet, E.~Evert, and L.~De~Lathauwer.
\newblock Decomposition of a tensor into multilinear
  rank-\({(M\_{{r}},N\_{{r}},\cdot )}\) terms.
\newblock {\em SIAM Journal on Matrix Analysis and Applications},
  45(3):1310--1334, 2024.

\bibitem{DriggsTLDS2020}
D.~Driggs, J.~Tang, J.~Liang, M.~E. Davies, and C.~Sch{\"{o}}nlieb.
\newblock A stochastic proximal alternating minimization for nonsmooth and
  nonconvex optimization.
\newblock {\em {SIAM} J. Imaging Sci.}, 14(4):1932--1970, 2021.

\bibitem{fu2019}
X.~Fu and K.~Huang.
\newblock Block-term tensor decomposition via constrained matrix factorization.
\newblock In {\em Proc. Mach. Learn. Signal Process.}, Oct 2019.

\bibitem{FuIWGH20}
X.~Fu, S.~Ibrahim, H.~Wai, C.~Gao, and K.~Huang.
\newblock Block-randomized stochastic proximal gradient for low-rank tensor
  factorization.
\newblock {\em IEEE Trans. Signal Process.}, 68:2170--2185, 2020.

\bibitem{6797154}
N.~Gillis and F.~Glineur.
\newblock Accelerated multiplicative updates and hierarchical als algorithms
  for nonnegative matrix factorization.
\newblock {\em Neural Comput.}, 24(4):1085--1105, 2012.

\bibitem{gujral2020}
E.~Gujral, R.~Pasricha, and E.~E. Papalexakis.
\newblock Beyond rank-1: Discovering rich community structure in multi-aspect
  graphs.
\newblock In {\em Proc. WWW-2020}, Apr 2020.

\bibitem{Guo2023}
C.~Guo, J.~Zhao, and Q.~L. Dong.
\newblock A stochastic two-step inertial bregman proximal alternating
  linearized minimization algorithm for nonconvex and nonsmooth problems.
\newblock {\em Numer. Algor.}, 2023.

\bibitem{HertrichS20}
J.~Hertrich and G.~Steidl.
\newblock Inertial stochastic {PALM} and applications in machine learning.
\newblock {\em Sampl. Theory Signal Process. Data Anal.}, 20(1), 2022.

\bibitem{Hitchcock1927}
F.~L. Hitchcock.
\newblock The expression of a tensor or a polyadic as a sum of products.
\newblock {\em J. Math. Phys.}, 6(1-4):164--189, 1927.

\bibitem{Hitchcock1928}
F.~L. Hitchcock.
\newblock Multiple invariants and generalized rank of a p-way matrix or tensor.
\newblock {\em J. Math. Phys.}, 7(1-4):39--79, 1928.

\bibitem{GCP2020}
D.~Hong, T.~G. Kolda, and J.~A. Duersch.
\newblock Generalized canonical polyadic tensor decomposition.
\newblock {\em SIAM Rev.}, 62(1):133--163, 2020.

\bibitem{JUTTEN19911}
C.~Jutten and J.~Herault.
\newblock Blind separation of sources, part i: An adaptive algorithm based on
  neuromimetic architecture.
\newblock {\em Signal Process.}, 24(1):1--10, 1991.

\bibitem{KoldaB09}
T.~G. Kolda and B.~W. Bader.
\newblock Tensor decompositions and applications.
\newblock {\em SIAM Rev.}, 51(3):455--500, 2009.

\bibitem{Krijnen2008}
W.~P. Krijnen, T.~K. Dijkstra, and A.~Stegeman.
\newblock {On the non-existence of optimal solutions and the occurrence of
  "degeneracy" in the CANDECOMP/PARAFAC model}.
\newblock {\em Psychometrika}, 73(3):431--439, 2008.

\bibitem{Lan2020First}
G.~Lan.
\newblock {\em First-Order and Stochastic Optimization Methods for Machine
  Learning}.
\newblock Springer, 2020.

\bibitem{lee2001}
D.~D. Lee and H.~S. Seung.
\newblock Algorithms for non-negative matrix factorization.
\newblock In {\em Proc. Int. Conf. Neural Inf. Process. Syst.}, pages 556--562,
  2001.

\bibitem{Liang2016}
J.~Liang, J.~Fadili, and G.~Peyré.
\newblock A multi-step inertial forward--backward splitting method for
  non-convex optimization, 2016.

\bibitem{Lim2009}
L.-H. Lim and P.~Comon.
\newblock Nonnegative approximations of nonnegative tensors.
\newblock {\em J. Chemom.}, 23:432--441, 2009.

\bibitem{4359171}
C.-J. Lin.
\newblock On the convergence of multiplicative update algorithms for
  nonnegative matrix factorization.
\newblock {\em IEEE Trans. Neural Netw.}, 18(6):1589--1596, 2007.

\bibitem{liu2024}
Z.~Liu, Q.~Wang, C.~Cui, and Y.~Xia.
\newblock Inertial accelerated stochastic mirror descent for large-scale
  generalized tensor cp decomposition, 2024.

\bibitem{ma2014signal}
W.~Ma et~al.
\newblock A signal processing perspective on hyperspectral unmixing: Insights
  from remote sensing.
\newblock {\em IEEE Signal Process. Mag.}, 31(1):67--81, Jan 2014.

\bibitem{NguyenLST17}
L.~M. Nguyen, J.~Liu, K.~Scheinberg, and M.~Tak{\'{a}}c.
\newblock {SARAH:} {A} novel method for machine learning problems using
  stochastic recursive gradient.
\newblock In {\em Proceedings of the 34th International Conference on Machine
  Learning}, pages 2613--2621, 2017.

\bibitem{Paatero2000}
P.~Paatero.
\newblock {Construction and analysis of degenerate PARAFAC models}.
\newblock {\em J. Chemom.}, 14(3):285--299, 2000.

\bibitem{PockS16}
T.~Pock and S.~Sabach.
\newblock Inertial proximal alternating linearized minimization {(iPALM)} for
  nonconvex and nonsmooth problems.
\newblock {\em {SIAM} J. Imaging Sci.}, 9(4):1756--1787, 2016.

\bibitem{Polyak64}
B.~T. Polyak.
\newblock Some methods of speeding up the convergence of iteration methods.
\newblock {\em USSR Comput. Math. Math. Phys.}, 4(5):1--17, 1964.

\bibitem{Pu2022}
W.~Pu, S.~Ibrahim, X.~Fu, and M.~Hong.
\newblock Stochastic mirror descent for low-rank tensor decomposition under
  non-euclidean losses.
\newblock {\em IEEE Trans. Signal Process.}, 70:1803--1818, 2022.

\bibitem{qian2017}
Y.~Qian, F.~Xiong, S.~Zeng, J.~Zhou, and Y.~Y. Tang.
\newblock Matrix-vector nonnegative tensor factorization for blind unmixing of
  hyperspectral imagery.
\newblock {\em IEEE Trans. Geosci. Remote Sens.}, 55(3):1776--1792, Mar 2017.

\bibitem{WetsR98}
R.~T. Rockafellar and R.~J.-B. Wets.
\newblock {\em Variational Analysis}.
\newblock Grundlehren der Mathematischen Wissenschaften, 317, Springer, 1998.

\bibitem{Tseng2001}
P.~Tseng.
\newblock Convergence of a block coordinate descent method for
  nondifferentiable minimization.
\newblock {\em J. Optim. Theory Appl.}, 109:475--494, 2001.

\bibitem{Tucker1963}
L.~R. Tucker.
\newblock Implications of factor analysis of three-way matrices for measurement
  of change.
\newblock {\em Probl. Meas. Change}, 15:122--137, 1963.

\bibitem{Tucker1966}
L.~R. Tucker.
\newblock Some mathematical notes on three-mode factor analysis.
\newblock {\em Psychometrika}, 31(3):279--311, 1966.

\bibitem{Tucker1964}
L.~R. Tucker et~al.
\newblock The extension of factor analysis to three-dimensional matrices.
\newblock {\em Contrib. Math. Psychol.}, pages 110--119, 1964.

\bibitem{WangCH21}
Q.~Wang, C.~Cui, and D.~Han.
\newblock A momentum block-randomized stochastic algorithm for low-rank tensor
  {CP} decomposition.
\newblock {\em Pac. J. Optim.}, 17(3):433--452, 2021.

\bibitem{WangH23}
Q.~Wang and D.~Han.
\newblock A {B}regman stochastic method for nonconvex nonsmooth problem beyond
  global {L}ipschitz gradient continuity.
\newblock {\em Optim. Methods. Softw.}, 38(5):914--946, 2023.

\bibitem{Wang2023}
Q.~Wang, Z.~Liu, C.~Cui, and D.~Han.
\newblock Inertial accelerated sgd algorithms for solving large-scale
  lower-rank tensor {CP} decomposition problems.
\newblock {\em J. Comput. Appl. Math.}, 423:114948, 2023.

\bibitem{WangLCH24}
Q.~Wang, Z.~Liu, C.~Cui, and D.~Han.
\newblock A {Bregman} proximal stochastic gradient method with extrapolation
  for nonconvex nonsmooth problems.
\newblock In {\em AAAl Conf. Artif. Intell.}, volume~38, pages 15580--15588,
  2024.

\bibitem{8497054}
F.~Xiong, Y.~Qian, J.~Zhou, and Y.~Y. Tang.
\newblock Hyperspectral unmixing via total variation regularized nonnegative
  tensor factorization.
\newblock {\em IEEE Trans. Geosci. Remote Sens.}, 57(4):2341--2357, 2019.

\bibitem{120887795}
Y.~Xu and W.~Yin.
\newblock A block coordinate descent method for regularized multiconvex
  optimization with applications to nonnegative tensor factorization and
  completion.
\newblock {\em SIAM J. Imag. Sci.}, 6(3):1758--1789, 2013.

\bibitem{XuY15}
Y.~Xu and W.~Yin.
\newblock Block stochastic gradient iteration for convex and nonconvex
  optimization.
\newblock {\em {SIAM} J. Optim.}, 25(3):1686--1716, 2015.

\end{thebibliography}
\end{document}